\documentclass{article}

\usepackage[english]{babel}
\usepackage{graphicx}
\usepackage{subcaption}
\usepackage{amsmath}
\usepackage{amsthm}
\usepackage{amssymb}
\usepackage{mathrsfs}
\newtheorem{theorem}{Theorem}[section]
\newtheorem{lemma}[theorem]{Lemma}
\newtheorem{proposition}{Proposition}[section]
\newtheorem{corollary}[theorem]{Corollary}
\theoremstyle{definition}
\newtheorem{definition}[theorem]{Definition}

\theoremstyle{definition}           
\newtheorem{example}{Example}[section] 
\theoremstyle{remark}
\newtheorem{remark}[theorem]{Remark}

\usepackage{float}
\usepackage{enumitem}
\usepackage{accents}

\newcommand{\lPi}{\rotatebox[origin=c]{180}{$\Pi$}}

\usepackage{natbib}
\usepackage[letterpaper,top=2cm,bottom=2cm,left=3cm,right=3cm,marginparwidth=1.75cm]{geometry}

\usepackage{amsmath}
\usepackage{graphicx}
\usepackage[colorlinks=true, allcolors=blue]{hyperref}

\title{Efficiency of Valid Inferential Models: Choquet-risk Optimal Possibility Measures, and Direct Comparisons}
\author{
Max Raner\thanks{Department of Mathematics, Uppsala University, Uppsala, Sweden. 
Email: \texttt{max.raner@math.uu.se}.}
}

\begin{document}
\maketitle

\begin{abstract}

Valid possibilistic inferential models provide exact finite-sample calibration, but validity alone does not determine which of several valid procedures gives the most informative inference. This paper proposes Choquet risk as a decision-theoretic criterion for comparing valid possibility measures in finite samples. Given a non-negative penalty functional, Choquet loss is defined as the Choquet integral of that penalty with respect to the data-dependent possibility measure, and Choquet risk as its sampling expectation. A key reduction expresses this risk through the nested $\alpha$-cuts of the contour (its superlevel-sets), thereby linking procedure-level efficiency to the expected performance of calibrated confidence sets. For concentration penalties, the criterion reduces to integrated expected set size, equivalently expected contour volume, so levelwise optimal confidence sets induce Choquet-risk optimal valid contours.

The framework is developed along two classical routes to optimality. First, a possibilistic notion of unbiasedness is introduced and shown, under validity, to coincide with unbiasedness of the induced confidence sets and tests, allowing UMPU and most-accurate-unbiased results to be transferred to valid contours. Second, an equivariant minimax theory is developed, including a Gaussian-location result in which the Gaussian possibility contour is Choquet-risk minimax for radial distance-to-truth losses. The construction also extends confidence risk from additive confidence distributions to non-additive calibrated IM output, with Choquet loss acting as a least-favourable confidence loss. Finally, the paper clarifies the penalty-dependence of efficiency comparisons and motivates invariant size criteria and divergence-based intrinsic losses connected locally to Fisher--Rao geometry.

\end{abstract}


\noindent\textbf{Keywords:} Choquet integral; Confidence distributions; Decision theory; Possibility theory

\section{Introduction}
In Efron's terminology, the ``holy grail of statistical theory'' is the longstanding aim of producing posterior-like uncertainty summaries without relying on a subjective prior \citep{efron2010future}. To this end, a variety of proposals have been put forth, including generalized fiducial inference \citep{Hannig2016}, confidence distributions \citep{xie2013, schweder_hjort_2016}, objective-Bayes procedures \citep{Jeffreys46, BergerObjective}, and valid inferential models (IMs) \citep{Martin2013, Martin2025}. Different proposals pursue this aim with different tradeoffs. At a high level one can think of four desiderata—four “inductive dials”—that any inferential procedure implicitly tunes: \textbf{calibration} (frequentist reliability), \textbf{coherence} (internal consistency of the uncertainty calculus), \textbf{efficiency} (informativeness subject to calibration), and \textbf{flexibility} (scope and ease of use across problems). The key point for the present paper is that efficiency is only meaningful relative to calibration. Without a reliability constraint, “sharp” inference can always be obtained by over-confident output. Conversely, calibrated inference is only meaningful insofar as it is efficient. Once calibration is in place then, the main difficulty lies in how to capture efficiency.

Valid inferential models \citep{Martin2013} — ``one of the original statistical innovations of the 2010s" \citep{demyst} — can be read as turning the calibration dial up to its maximum by enforcing assertion-wise frequentist guarantees. In the possibilistic formulation, the inferential output is a data-dependent possibility measure $\Pi_x$ (equivalently, a contour $\pi_x$; we defer the formal definitions to the background section). Calibration alone, however, does not determine a unique procedure: in a given problem there are typically many distinct valid IM constructions available to choose from, ranging from nearly vacuous to highly informative. Without a meaningful way to measure efficiency, we then have little guidance for choosing among them. This raises the central question that motivates this paper: \textbf{how should we define—and compare—the finite-sample efficiency of valid possibility measures?}

Efficiency has been an explicit but elusive theme in the IM literature. Empirically, valid possibilistic IMs often produce uncertainty summaries that are competitive with familiar default-prior Bayes or classical frequentist solutions, but until recently this was supported mainly by examples. The large-sample picture is now much clearer. A possibilistic Bernstein–von Mises (BvM) theorem \citep{Martin2025BvM} shows that, under Le Cam–type regularity, centered and scaled IM contours can be uniformly approximated by a Gaussian possibility contour corresponding to the limiting likelihood experiment and Cramér-Rao efficiency. In particular, there is no asymptotic efficiency loss from imposing exact finite-sample validity and imprecision. What remains missing is a general finite-sample notion of efficiency for valid possibility measures that (i) compares distinct valid IM constructions on a common scale, (ii) recovers classical confidence-set optimality when specialized, (iii) aligns with the large-sample picture asymptotically, and (iv) supports direct numerical comparisons when analytic optimality arguments are unavailable.

This paper's key contributions are as follows. This paper proposes a decision-theoretic answer that evaluates the procedures $\Pi_x$ directly. We introduce a family of risk-functionals for possibility measures built from Choquet integrals, the natural notion of expectation for non-additive uncertainty. Given a non-negative penalty functional $\Gamma$ that quantifies either accuracy (distance-to-truth) or concentration (tightness of plausibility regions), we define the \emph{Choquet loss} as the upper expectation of $\Gamma$ under $\Pi_x$, and the corresponding \emph{Choquet risk} as the sampling expectation of the data-dependent loss. A key feature is that Choquet risk decomposes as an integral of expected ``worst-case penalties''over a nested family of confidence sets. This reduction lets us lift classical optimality results for confidence sets—e.g., most accurate unbiased intervals and minimax equivariant regions—into a unified efficiency theory for valid possibility measures. When analytic optimization is unavailable or infeasible, the framework also enables direct, procedure-level comparisons (e.g., by simulation).

The approach is inspired by confidence loss and confidence risk (introduced to capture the efficiency of confidence distributions; see e.g. \cite[Ch.~5.3]{schweder_hjort_2016}), but here adapted to an imprecise probability setting via Choquet integration. Moreover, we make explicit connections between Choquet-risk and confidence risk.

Classical optimality results typically require additional structure. Unbiasedness can yield global best procedures within limited classes, while equivariance can replace unattainable pointwise optimality with a minimax principle and provide guidance in broader classes. Accordingly, after developing the general Choquet-risk framework, we study two complementary routes to optimality. First, we introduce a possibilistic notion of unbiasedness and show that, under validity, it coincides with classical unbiasedness of the induced confidence sets and tests. This permits a direct translation of classical UMPU and most-accurate unbiased confidence-set theory into concentration-optimal valid contours. Second, we develop an equivariant/minimax theory for valid contours: equivariance of a contour is equivalent to equivariance of all its $\alpha$-cuts, a Hunt--Stein-type reduction justifies restricting minimax comparisons to equivariant procedures, and in the Gaussian location experiment the standard Gaussian z-test possibility contour is shown to be Choquet-risk minimax for radial losses. Whilst showing the z-test is optimal in yet another way might not seem very novel, the kind of radial loss optimality it demonstrates is novel. Moreover, the proof of the result hints at an interesting connection between convex geometry and this new decision theory framework of Choquet risk.

The paper also clarifies how Choquet-risk comparisons depend on the chosen
parametrization and on the penalty used to evaluate a contour. Since risk is defined
relative to a penalty, a contour that is efficient for squared error in \(\theta\) need
not be efficient for squared error in \(\phi=g(\theta)\). This is not a failure of the
risk framework, but a change of the decision problem. If the original penalty is
transported along the reparameterization, the corresponding risk comparison is
unchanged. The more substantive question is therefore how to choose penalties that
are not tied to arbitrary coordinates. This motivates model-based choices such as
Haar-size concentration criteria and divergence-based accuracy losses. In regular
models, the latter reduce locally to Fisher--Rao quadratic loss, thereby connecting
the finite-sample theory to the Gaussian benchmark suggested by the possibilistic
Bernstein--von Mises theorem. This connection is used here as an asymptotic
roadmap, not as a completed local asymptotic minimax theorem.

The rest of the paper is organized as follows. Section \ref{sec:background} reviews the background on possibility measures, credal sets, Choquet integration, validity, standard IM constructions, and existing notions of efficiency, including the large-sample Bernstein--von Mises picture and the limitations of current finite-sample comparisons. Section~\ref{sec:theory} develops the theory. Section \ref{sec:choclossandrisk} defines Choquet loss and risk and proves the \(\alpha\)-cut decomposition. Section \ref{sec:optRisk} formulates Choquet-risk optimality and shows how levelwise confidence-set optimality lifts to possibility-contour optimality. Section \ref{sec:confrisk} relates Choquet risk to confidence risk. Section \ref{sec:pos-unb} develops possibilistic unbiasedness and its connection to classical unbiased tests and confidence sets. Section \ref{sec:equi} develops the equivariant/minimax theory, including the Gaussian-location benchmark and the transitive-model construction. Section \ref{sec:directComp} turns to direct simulation-based comparisons of concrete procedures. Section \ref{sec:reparinvariance} discusses reparametrisation, invariant size criteria, intrinsic divergence losses, and the asymptotic Fisher--Rao outlook. Section \ref{sec:conclusion} concludes, and the appendix contains the technical symmetrization and Gaussian-geometry arguments used in the minimax results.


\section{Background: possibility measures and validity}\label{sec:background}

\subsection{Possibility measures, contours, and credal sets}\label{sec:possibility}
In this paper we focus on a particularly simple and statistically natural imprecise probability formalism:
\emph{possibility measures} (see e.g. \cite{dubois_prade_1988}, or \cite{Dubois2006PossibilityStats}); they can also be seen as \emph{consonant belief functions} (e.g. \cite{shafer2020mathematical}), and have close ties to fuzzy set theory (e.g. \cite{Zadeh1978Possibility}). These are expressive enough to support assertion-wise calibration (validity),
yet structured enough to yield tractable representations (contours, nested credible sets) and a clean
notion of expectation (Choquet/upper expectation) that we use later to define risk. Instead of additive, possibility measures are \emph{maxitive} (i.e., $\Pi(A\cup B)=\max{\{\Pi(A),\Pi(B)\}}$), and therefore encode a different type of information. They express what is \emph{not ruled out}—an emphasis in alignment with the logic of classical statistical reasoning, where the central concern is what cannot be excluded.

A (normalized) possibility measure on $(\Theta,\mathcal{B}(\Theta))$ is a set function
$\Pi:\mathcal{B}(\Theta)\to[0,1]$ satisfying:
\begin{enumerate}
\item \textbf{Normality:} $\Pi(\varnothing)=0$ and $\Pi(\Theta)=1$.
\item \textbf{Maxitivity:} for any countable collection $\{A_i\}$,
\[
  \Pi\Big(\bigcup_i A_i\Big)=\sup_i \Pi(A_i).
\]
\end{enumerate}
Maxitivity replaces additivity: where probability assigns mass by \emph{integration},
possibility assigns plausibility by \emph{optimization}. Concretely, $\Pi$ is determined by a
\emph{possibility contour} $\pi:\Theta\to[0,1]$ satisfying $\sup_{\theta\in\Theta}\pi(\theta)=1$, via
\begin{equation}\label{eq:poss-from-contour}
  \Pi(A)=\sup_{\theta\in A}\pi(\theta),\qquad A\in\mathcal{B}(\Theta).
\end{equation}
Intuitively, $\pi(\theta)$ measures how plausible a specific parameter value $\theta$ is, while
$\Pi(A)$ measures how plausible an assertion $A$ is, in the sense that $A$ is ruled out
only if \emph{all} its points are implausible. Though $\pi(\theta)=\Pi(\{\theta\})$, so the contour of a possibility measure is just the surface of singleton assertion possibility assignments.

The dual of $\Pi$ is the \emph{necessity measure} $\lPi$, defined by conjugacy:
\[
  \lPi(A)=1-\Pi(A^c),\qquad A\in\mathcal{B}(\Theta),
\]
which can be interpreted as a degree of support \emph{in favor} of $A$ (while $\Pi(A)$ expresses
the extent to which $A$ is \emph{not ruled out}).

A convenient geometric representation is given by \emph{$\alpha$-cuts} (superlevel sets) of the contour:
\begin{equation}\label{eq:alpha-cuts}
  A_\alpha = A_\alpha(\pi) := \{\theta \in \Theta : \pi(\theta) \ge \alpha\},
\qquad \alpha \in [0,1],
\end{equation}
These sets are nested: if $\alpha_1<\alpha_2$, then $A_{\alpha_2}\subseteq A_{\alpha_1}$.
The set $A_1=\{\theta:\pi(\theta)=1\}$ is the \emph{core} of the contour: any set containing the core
has possibility $1$, and any set that fails to contain the core has necessity $0$. And the contour can be recovered from its $\alpha$-cuts via
\begin{equation}\label{eq:contcutrep}
    \pi(\theta)=\sup\{\alpha\in[0,1]:\theta\in A_\alpha\}.
\end{equation}

\subsubsection*{Credal set interpretation}

Possibility measures are coherent upper probabilities and therefore admit a standard
\emph{credal set} interpretation. The credal set associated with $\Pi$ is
\begin{equation}\label{eq:credal-set}
  \mathcal{C}(\Pi)=\{Q \in \mathrm{prob}(\Theta): Q(A)\le \Pi(A)\ \ \forall A\in\mathcal{B}(\Theta)\},
\end{equation}
where $\mathrm{prob}(\Theta)$ denotes the set of (countably additive) probability measures on
$(\Theta,\mathcal{B}(\Theta))$. In words, $\mathcal{C}(\Pi)$ is the set of ordinary probabilities that
are dominated by $\Pi$. Moreover, $\Pi$ can be recovered as the upper envelope of its credal set:
$\Pi(A)=\sup\{Q(A):Q\in\mathcal{C}(\Pi)\}$.

A key advantage of possibility measures is that $\mathcal{C}(\Pi)$ has a simple characterization in terms of
$\alpha$-cuts (see \citet{dubois2004probability}, and \citet{couso2001necessity}).  A probability $Q$ belongs to $\mathcal{C}(\Pi)$ if and only if it assigns conservative mass to
all $\alpha$-cuts, i.e.
\begin{equation}\label{eq:credal-alpha}
  Q\in\mathcal{C}(\Pi)\quad \Longleftrightarrow\quad Q(A_\alpha)\ge 1-\alpha,\ \ \forall \alpha\in[0,1].
\end{equation}
Equivalently, if $\Theta^\star\sim Q$, then $\pi(\Theta^\star)$ is stochastically no smaller than $\mathrm{Unif}(0,1)$.

\subsubsection*{Upper expectations and the Choquet integral}\label{sec:upper-expectation}

If $\Pi$ is viewed as an upper probability, then a natural notion of “expectation” for a measurable
$f:\Theta\to[0,\infty)$ is the \emph{upper expectation}
\begin{equation}\label{eq:upper-exp}
  \overline{E}_\Pi[f] \;=\; \sup_{Q\in\mathcal{C}(\Pi)} E_Q[f].
\end{equation}
This upper expectation coincides with the Choquet integral of $f$ with respect to $\Pi$ \cite{TroffaesDeCooman2014}.
For possibility measures, it admits a particularly transparent $\alpha$-cut representation (see \cite{TroffaesDeCooman2014} results 7.14 and 15.42): for every nonnegative $\mathcal B(\Theta)$-measurable $f$ such that the Choquet integral exists,
\begin{equation}\label{eq:choquet-poss}
  \overline{E}_\Pi[f]
  \;=\; \int f d\Pi \;=\;
  \int_0^1 \sup_{\theta\in A_\alpha} f(\theta)\, d\alpha.
\end{equation}
Formula \eqref{eq:choquet-poss} is the form used later when we define loss and risk for
data-dependent possibility measures: it expresses the upper expectation as an average over
plausibility levels $\alpha$, of the worst-case value of $f$ over the corresponding $\alpha$-cut.

\subsection{Validity of data-dependent possibility measures}\label{sec:validity}

An \emph{inferential model} (IM) in the present paper is a mapping
\[
  x \longmapsto (\lPi_x,\Pi_x),
\]
where $\Pi_x$ is a possibility measure on $(\Theta,\mathcal{B}(\Theta))$ with contour $\pi_x$ and
$\lPi_x(A)=1-\Pi_x(A^c)$ is its necessity measure. We focus primarily on $\Pi_x$ (or equivalently $\pi_x$),
since $\lPi_x$ is determined by conjugacy.

The defining reliability property of (possibilistic) IMs is \emph{validity}, an assertion-wise frequentist
calibration condition \citep{Martin2013}.
\begin{definition}
    We say that the inferential model $x\mapsto\Pi_x$ is \emph{valid} if 
\begin{equation}\label{eq:validity-set}
  \sup_{\theta\in H} P_\theta\{\Pi_X(H) < \alpha\}\ \le\ \alpha, \quad \text{for all $H\in\mathcal{B}(\Theta)$, $\alpha\in[0,1]$}.
\end{equation}
\end{definition}
In words: when $H$ is true, the event that the IM assigns \emph{small} possibility to $H$
is controllably rare, uniformly over the true parameter values in $H$. By conjugacy, if $H$ is false, validity can equivalently be characterised as the event that the IM assigns \emph{high} necessity (support) to $H$ is controllably rare.

Since $\Pi_x(H)=\sup_{\vartheta\in H}\pi_x(\vartheta)$, validity can equivalently be expressed at the
singleton level:
\begin{equation}\label{eq:validity-point}
  P_\theta\{\pi_X(\theta) < \alpha\}\ \le\ \alpha,\qquad \forall \theta\in\Theta,\ \forall \alpha\in[0,1].
\end{equation}
Thus, for each fixed \(\theta\), the random variable \(\pi_X(\theta)\) is stochastically no smaller than \(\mathrm{Unif}(0,1)\) under \(P_\theta\). 
This version is often called \emph{strong} validity, as it is the calibration criteria ensuring credible sets are confidence sets. In this setting (under vacuous prior information), definitions \eqref{eq:validity-set} and \eqref{eq:validity-point} are equivalent, but in settings with genuine (partial) prior information, the latter is a \emph{stronger} calibration criteria (see \cite{Martin2023b}). This is the same calibration property enjoyed by possibly conservative \(p\)-values, so \(\pi_x(\theta)\) may be read, at the singleton level, as a calibrated measure of evidence against the point assertion \(\{\theta\}\). 
The point, however, is not merely that possibility contours behave like collections of \(p\)-values. Rather, the \(p\)-value interpretation is embedded in a richer possibilistic uncertainty calculus: the singleton contour \(\theta \mapsto \pi_x(\theta)\) determines the assertion-wise upper probability \(\Pi_x(H)=\sup_{\theta\in H}\pi_x(\theta)\), its dual necessity measure, and the nested family of calibrated plausibility regions.

\begin{remark}
    We use a strict ``$<$'' in \eqref{eq:validity-set}--\eqref{eq:validity-point} to align the superlevel sets
$\{\pi_x\ge \alpha\}$ with closed confidence regions (important in discrete problems). In continuous
models, the distinction between ``$<$'' and ``$\le$'' is typically immaterial.
\end{remark}

Validity implies that $\alpha$-cuts of the contour are calibrated confidence regions. Define the
data-dependent $\alpha$-cut
\begin{equation}\label{eq:valid-alpha-cut}
  A_\alpha(X)=\{\theta\in\Theta:\pi_X(\theta)\ge \alpha\},\qquad \alpha\in[0,1].
\end{equation}
Then \eqref{eq:validity-point} yields the exact finite-sample coverage statement
\begin{equation}\label{eq:coverage}
  P_\theta\{\theta\in A_\alpha(X)\}\ \ge\ 1-\alpha,\qquad \forall \theta\in\Theta.
\end{equation}
That is, $A_\alpha(X)$ is a $100(1-\alpha)\%$ confidence region for $\theta$.
Equivalently, for every assertion $H$,
the test that rejects $H$ when $\Pi_X(H)\le \alpha$ has Type~I error probability bounded by $\alpha$,
uniformly over $\theta\in H$, by \eqref{eq:validity-set}.

Moreover, combining validity with the credal-set characterization \eqref{eq:credal-alpha} gives a
useful interpretation of the probabilities $Q_x\in\mathcal{C}(\Pi_x)$: since validity makes each
$A_\alpha(X)$ a confidence region, any $Q_x\in\mathcal{C}(\Pi_x)$ assigns at least $1-\alpha$ probability
to each calibrated confidence region. As such the particular elements of $\mathcal{C}(\Pi_x)$ that are \emph{saturating} the $\alpha$-cuts, i.e. assign mass $Q_x(A_\alpha)=1-\alpha$ \emph{exactly}, are probability matching for genuine confidence sets, and can therefore be
viewed as (possibly conservative) \emph{confidence distributions} in a multivariate sense. For further discussions of the connection between confidence distributions and valid possibilistic inferential models, see e.g. \cite{reIM}. 

\subsubsection{Some constructions of valid IMs}\label{sec:validification}
The convenient representation of a possibility contour in \eqref{eq:contcutrep} via its $\alpha$-cuts means that \emph{any} nested family of confidence sets $\{C_\alpha(x)\}_{\alpha \in (0,1)}$ \emph{induces} a valid possibility contour by
\begin{equation*}
    \pi_x(\theta)=\sup \{\alpha\in(0,1):\theta \in C_\alpha(x)\}.
\end{equation*}
This means that \emph{most} of classical frequentist methods can be absorbed under the valid possibilistic umbrella (see Proposition \ref{prop:unbiased-equiv}; tests with nested acceptance regions can be similarly converted, see \cite[Appendix~G]{Martin2025}).

While the developments in this paper apply to general data-dependent possibility measures, such as those constructed via predictive random sets in \citep{Martin2013} and a chosen data-generating equation,
it is useful to recall a canonical IM construction that produces valid contours from familiar
likelihood-based rankings.

Given data $X=x$, let $L_x(\theta)$ denote the likelihood and define the relative likelihood
\[
  R(x,\theta)=\frac{L_x(\theta)}{\sup_{\vartheta\in\Theta} L_x(\vartheta)}\in[0,1].
\]
The function $\theta\mapsto R(x,\theta)$ is itself a (normalized) possibility contour and has a long history
as a likelihood-driven plausibility ranking (e.g. \cite{Dubois2024likelihood}), but it lacks is a \emph{standard scale} across problems.

A uniform scale can be obtained via \emph{validification}, i.e., a probability-to-possibility transform (see \citet{Martin2023b} for a full motivation of this construction, and \citet{Martin2025} for more of its developments).
One common choice is
\begin{equation}\label{eq:validified-contour}
  \pi_x(\theta)=P_\theta\{R(X,\theta)\le R(x,\theta)\},\qquad \theta\in\Theta,
\end{equation}
and then $\Pi_x(H)=\sup_{\theta\in H}\pi_x(\theta)$.
The contour in
\eqref{eq:validified-contour} then satisfies \eqref{eq:validity-point}, hence the resulting $\Pi_x$ is valid.
In many regular models, $\pi_x(\theta)$ coincides with the $p$-value function from a likelihood-ratio test of
$H_0:\Theta=\theta$.

The above construction admits refinements (e.g., conditioning on ancillary statistics or profiling
to eliminate nuisance parameters) that preserve validity while improving efficiency. 

\subsection{Why possibility?}\label{sec:why-imprecision}

The imprecise-uncertainty formalism, and possibility theory in particular, is not an aesthetic choice but a structural one. If we insist on frequentist calibration for arbitrary inferential output (validity), then \emph{imprecision is forced upon us}. Results such as the \emph{False confidence theorem} \citet{BalchMartinFerson2019FCT} show—that under mild regularity—any \emph{additive}, data-dependent probability measure, is unavoidably vulnerable to the phenomena of false confidence (roughly: systematically assigning high posterior probability to false assertions). This means that inferential procedures with ``familiar'' additive probabilistic output necessarily have to limit their scope in terms of assertions for which they can give reliability guarantees. Hence if we demand calibration guarantees for \emph{arbitrary} assertions, we \emph{must} therefore move beyond additivity to \emph{non-additive} (imprecise)
uncertainty quantification—i.e., coherent lower/upper probabilities—where strong calibration
properties are attainable in finite samples. 

A key identification made in \citep{Martin2023b}, is that for whichever imprecise-uncertainty formalism we decide to use, it should be expressive enough to support the natural calibration property of having credible sets that are confidence sets; moreover these credible sets should be easily identifiable from the uncertainty output. These natural requirements exclude, not by necessity, but rather by convenience, formalisms unable to satisfy the stronger form of reliability in \eqref{eq:validity-point}, leaving essentially only possibility as a viable option. The only exceptions are non-consonant upper probabilities induced by \emph{consistent random sets} \citep{DUBOIS1990consapprox}; for more, see the discussion in \citet{Martin2023b} Sections 3.1-3.2 leading up to Lemma 1. Importantly, for any such exception satisfying this strong form of validity, there exists a strongly valid possibility measure that is \emph{no less informative}. So in this statistical context, if we accept these structural and calibration requirements, we can not only safely restrict our attention to \emph{possibility measures}, but are guided to do so under efficiency considerations. Moreover, in the context of concern in this paper, with vacuous prior knowledge, these different notions of validity are equivalent, hence possibility is \emph{the canonical representation} of any valid formalism, as implied by Proposition \ref{prop:unbiased-equiv}, and more generally in \citet[Appendix~G]{Martin2025}. That is, when the inferential output of a procedure is a family of nested regions (e.g., acceptance regions, or confidence sets), there exists always a possibility measure representation of those regions that is no less informative/efficient. 

The strong validity requirement in \eqref{eq:validity-point} is not purely a matter of convenience. It implies a stronger \emph{uniform-in-hypotheses} version of \eqref{eq:validity-set}: 
\begin{equation}
    \sup_\theta P_\theta\{\Pi_X(H)\leq \alpha \; \text{for some $H\in \mathcal B(\Theta)$ with $\theta\in H$}\}\leq \alpha.
\end{equation}
For an inferential framework to admit an uncertainty calculus, some variant of this uniform calibration property is essential. If the form of uncertainty representation does not permit the user to assign degrees-of-belief post data, the framework would be dead on arrival as far as reliability concerns goes.

\subsection{Efficiency} \label{sec:efficiency-background}

The motivating question for this section is about \emph{efficiency}: among valid data-dependent possibility measures, which are the most informative?

A basic notion of informativeness in imprecise-probabilistic inference is induced by pointwise dominance.
Given two possibility measures $\Pi^1_x$ and $\Pi^2_x$ (with contours $\pi^1_x$ and $\pi^2_x$), if
\[
\Pi^1_x(H)\le \Pi^2_x(H)\quad\text{for all assertions $H$}
\qquad\text{equivalently}\qquad
\pi^1_x(\vartheta)\le \pi^2_x(\vartheta)\quad\text{for all $\vartheta$,}
\]
then $\Pi^1_x$ is uniformly sharper: its $\alpha$-cuts are nested inside those of $\Pi^2_x$ at every level $\alpha$, hence it yields uniformly smaller calibrated confidence regions.
When such dominance holds it provides a clear-cut efficiency comparison.
In general, however, dominance is too strong to expect: two valid contours can cross, being tighter in some regions or at some plausibility levels and looser in others.
This is one reason why a general, finite-sample efficiency theory for valid possibility measures has been difficult to formulate.

\subsubsection*{Asymptotic efficiency via a possibilistic Bernstein--von Mises theorem.}
Recent progress has clarified the large-sample picture.
\cite{Martin2025BvM} establish a possibilistic Bernstein--von Mises theorem for likelihood-based IMs.
Under standard Le Cam-type LAN regularity conditions (and some extra, mild uniform continuity assumptions), the IM contour $\pi_{X_n}$ from \eqref{eq:validified-contour}, after centering at the MLE $\hat\theta_{X_n}$ and scaling at the usual $n^{-1/2}$ rate, is uniformly approximated (locally, in $\theta$) by a Gaussian possibility contour with covariance matrix $(n I_\Theta)^{-1}$, i.e., the Cramér--Rao lower bound.
Equivalently, one can read their result as saying that, for large $n$,
\[
\pi_{X_n}(\theta)\ \approx\ \gamma_{\hat\theta_{X_n},\,J^{-1}_{X_n}}(\theta),
\]
where $\gamma_{m,\Sigma}$ is the probability-to-possibility transform of a Gaussian ${\rm N}(m,\Sigma)$ and $J_{X_n}$ is the observed information.
This implies that the IM's $\alpha$-cuts merge with the familiar likelihood-based elliptical confidence regions, and that there is \emph{no asymptotic efficiency loss} attributable to finite-sample validity or to the IM's inherent imprecision.
A notable interpretation given in \cite{Martin2025BvM} is credal-set based: asymptotically, the IM credal set is the smallest one that contains the efficient Gaussian law, so the IM is ``as tight as possible'' among consonant imprecise-probabilistic summaries compatible with that Gaussian limit.

The same work also treats nuisance-parameter elimination and resolves an efficiency question that had previously been addressed mainly empirically.
Specifically, \cite{Martin2025BvM} develop marginal possibilistic BvM results showing that profiling-based marginalization yields asymptotically tighter marginal contours than extension-based marginalization. In orthogonal settings, the profile-based marginal IM can even attain an adaptive efficiency property, matching the local information available in the known-nuisance ``gold standard'' case.

While the asymptotic efficiency result of \cite{Martin2025BvM} is specific to the calibrated LR IM construction in \eqref{eq:validified-contour}, the authors argue that this efficiency is not tied to the relative-likelihood construction per se. Rather,
they suggest that asymptotic efficiency should extend more broadly to IMs built from structured
associations and predictive random sets, where additional model structure is available. Formally
proving such a generalization is an open problem.

\subsubsection*{Finite-sample efficiency: how it has been assessed to date.}
In contrast to the now much clearer asymptotic story, finite-sample efficiency comparisons in the IM literature have mostly been handled on a case-by-case basis.
In applications and methodological papers, efficiency has typically been assessed by looking directly at the induced calibrated regions at one or a few confidence levels---e.g., comparing the length/volume of $A_\alpha(x)$ for a fixed $\alpha$---or by visually comparing contour plots for representative datasets.
Simulation studies often report coverage together with an average size metric (mean length/volume) for nominal regions, echoing the classical practice of comparing confidence procedures via expected size at fixed coverage.
For example, in the Behrens--Fisher illustration in \cite{reIM}, competing nominal 90\% intervals are compared by empirical coverage and mean length, with the IM-based construction achieving near-nominal coverage while improving the mean length relative to a more conservative competitor.

These comparisons are informative, but are either too subjective, or inherently \emph{level-specific} and do not by themselves define a general procedure-level ordering: conclusions can e.g. depend on which $\alpha$ is chosen, and where in the parameter space the procedure is evaluated.
Moreover, for IM constructions based on associations and predictive random sets, the main design choice controlling informativeness is the predictive random set itself. As emphasized in \cite{Martin2025BvM}, broad guidelines for selecting it to optimize statistical efficiency (alongside computational considerations) are still limited.

\section{Theory} \label{sec:theory}
\subsection{Choquet Loss and Choquet Risk}\label{sec:choclossandrisk}

Assume we have several data-dependent possibility measures available for summarizing inference
about a parameter $\theta \in \Theta$. Formally, an inferential procedure considered here is an
inferential model (IM) $x \mapsto \Pi_x$, where $\Pi_x$ is a possibility measure on
$(\Theta,\mathcal{B}(\Theta))$ with contour $\pi_x$.

To compare such procedures, we fix a non-negative penalty function
\[
(x,\theta,\vartheta) \longmapsto \Gamma(x,\theta,\vartheta) \in [0,\infty],
\]
interpreted as the cost of treating $\vartheta$ as a candidate value when the truth is $\theta$.
Two important special cases are:
(i) \emph{distance-to-truth} (accuracy) penalties, where $\Gamma(x,\theta,\vartheta)=\Gamma_\theta(\vartheta)$
and typically $\Gamma_\theta(\theta)=0$ (e.g., $\Gamma_\theta(\vartheta)=\|\vartheta-\theta\|^2$); and
(ii) \emph{concentration} penalties, where $\Gamma(x,\theta,\vartheta)=\Gamma_x(\vartheta)$, designed
to quantify the tightness of the induced plausibility regions, regardless of $\theta$.
In Section  \ref{sec:equi} we impose additional structural conditions on $\Gamma$
(e.g., invariance), and in \ref{sec:reparinvariance} we instead consider  coordinate-free penalties.

For a realized dataset $x$, the \emph{Choquet loss} of $\Pi_x$ at the true state $\theta$ is
the \emph{upper expectation} of $\Gamma(x,\theta,\cdot)$ with respect to $\Pi_x$, i.e. the Choquet integral:
\begin{equation*}
L(\theta,\Pi_x)
= \int \Gamma(x,\theta,\vartheta)\, d\Pi_x(\vartheta)
= \int_0^1 \sup_{\vartheta\in A_\alpha(x)} \Gamma(x,\theta,\vartheta)\, d\alpha ,
\end{equation*}
where the last equality follows by \eqref{eq:choquet-poss}.
Formally, we give the following definition. 

\begin{definition}[Choquet loss and Choquet risk]
Let \(x\mapsto \Pi_x\) be a data-dependent possibility measure on
\((\Theta,\mathcal B(\Theta))\), with contour \(\pi_x\) and \(\alpha\)-cuts
$
A_\alpha(x)=\{\vartheta\in\Theta:\pi_x(\vartheta)\ge \alpha\}$, $ \alpha\in[0,1]$.
Fix a non-negative measurable penalty $
(x,\theta,\vartheta)\mapsto \Gamma(x,\theta,\vartheta)\in[0,\infty]$.
For observed data \(x\) and true value \(\theta\), define the
\emph{Choquet loss} of \(\Pi_x\) by
\begin{equation}\label{eq:choquet-loss}
L(\theta,\Pi_x)
:= \int \Gamma(x,\theta,\vartheta)\, d\Pi_x(\vartheta)
= \int_0^1 \sup_{\vartheta\in A_\alpha(x)} \Gamma(x,\theta,\vartheta)\, d\alpha ,
\end{equation}
whenever the integral is well defined. The \emph{Choquet risk} of the
procedure \(\Pi=\{\Pi_x:x\in\mathcal X\}\) is the sampling expectation of the Choquet loss:
\begin{equation}\label{eq:choquet-risk}
R(\theta,\Pi) := \mathbb{E}_\theta\{L(\theta,\Pi_X)\},
\end{equation}
whenever the expectation exists.
\end{definition}

\begin{remark}\label{rem:data-truth-loss}
At first impression, allowing the loss-penalty $\Gamma$ to depend on both the data and the truth might seem
unconventional. Here, however, $\Gamma$ is not used to define a downstream decision rule (i.e.,
choosing an action under $\Pi_x$, as is considered in e.g. \citet{Martin2026decision}); it is used only to evaluate and compare inferential procedures.
For that purpose, data- and truth-dependence is natural.
\end{remark}


Because $\Pi_x$ is maxitive, the representation \eqref{eq:choquet-loss} shows that the
Choquet loss aggregates---across plausibility levels $\alpha$---the \emph{worst} penalty among parameter
values that remain at least $\alpha$-plausible under $\Pi_x$. Equivalently, since the Choquet integral is the upper expectation over the credal set \(C(\Pi_x)\), \(L(\theta,\Pi_x)\) is the least-favourable expected penalty among the precise probabilities compatible with the possibilistic summary. The sets $\{A_\alpha(X)\}_{\alpha\in[0,1]}$ are calibrated confidence regions, so the otherwise quite abstract Choquet integral (for general belief functions) reduces in \eqref{eq:choquet-loss} to a simple average of ``worst-offender''
penalties over a nested family of confidence sets.

\paragraph{A risk reduction via $\alpha$-cuts.} When $L(\theta,\Pi_X)$ is integrable, Tonelli/Fubini allows us to interchange the outer expectation and the
$\alpha$-integral, a key reduction that will be exploited repeatedly.
Let $\Pi_X$ have contour $\pi_X$ and $\alpha$-cuts $A_\alpha(X)$.
Define the $\alpha$-cut risk by
\begin{equation}\label{eq:alpha-risk}
r_\alpha(\theta,\Pi)
:= \mathbb{E}_\theta\!\left[\sup_{\vartheta\in A_\alpha(X)} \Gamma(X,\theta,\vartheta)\right].
\end{equation}
Since the integrand in \eqref{eq:choquet-loss} is non-negative, Tonelli's theorem gives the decomposition
\begin{equation}\label{eq:risk-decomposition}
R(\theta,\Pi)=\int_0^1 r_\alpha(\theta,\Pi)\, d\alpha.
\end{equation}

\paragraph{Concentration penalties.} \leavevmode\par
A common notion of finite-sample efficiency compares procedures through the tightness of their
confidence/plausibility regions. The following penalties encode that idea directly.
Assume $\Theta\subseteq\mathbb{R}^d$ and that the $\alpha$-cuts $A_\alpha(x)$ have finite Lebesgue measure
$|A_\alpha(x)|$. Let $\phi:[0,\infty)\to[0,\infty)$ be non-decreasing (penalizing larger sets). Define the data-dependent penalty
\begin{equation}\label{eq:concentration-penalty}
\Gamma_x(\vartheta) := 
\phi\!\Bigl(\bigl|A_{\pi_x(\vartheta)}(x)\bigr|\Bigr).
\end{equation}
This penalty depends on \(\vartheta\) only through its contour level
\(t=\pi_x(\vartheta)\) and the size of the corresponding \(t\)-cut.
Because the cuts are nested, \(A_t(x)\subseteq A_s(x)\) whenever
\(t\ge s\), the map \(t\mapsto |A_t(x)|\) is non-increasing. Since
\(\phi\) is non-decreasing, it follows that
\[
t\mapsto \phi(|A_t(x)|)
\]
is non-increasing. Therefore, for any \(s\in[0,1]\),
\[
\sup_{\vartheta:\pi_x(\vartheta)\ge s}
\Gamma_x(\vartheta)
=
\sup_{t\ge s}\phi(|A_t(x)|)
=
\phi(|A_s(x)|).
\]
Consequently the Choquet loss collapses to an ordinary integral of cut sizes
\begin{equation}\label{eq:choquet-loss-collapse}
L(\theta,\Pi_x) = \int_0^1 \phi(|A_s(x)|)\, ds.
\end{equation}
Moreover, for $\phi(\ell)=\ell$, the concentration loss reduces to 
\begin{equation}
    \int_0^1|A_\alpha(x)|d\alpha=\int_\Theta\int_0^1\mathbf1\{\pi_x(\vartheta)\geq \alpha\}d\alpha d\vartheta=\int_\Theta\pi_x(\vartheta)d\vartheta.
\end{equation}
This makes it geometrically explicit that the concentration loss is simply the volume under the possibility contour.

\subsection{Optimal Choquet Risk}\label{sec:optRisk}

Let $\mathscr C_{\Pi}$  denote a collection of valid data-dependent possibility procedures, i.e. a set of distinct
valid procedures $\{\Pi^1,\Pi^2,\ldots\}$ where each procedure is a family
$\Pi^k=\{\Pi_x^k:x\in\mathbb{X}\}$. Fix a penalty $\Gamma$; all risk, optimality, and admissibility statements
in this section are understood with respect to this fixed $\Gamma$.

For $\Pi\in\mathscr C_{\Pi}$, recall the Choquet risk functional $R(\theta,\Pi)$ in \eqref{eq:choquet-risk}.
We say that $\Pi^\ast\in\mathscr C_{\Pi}$ is \emph{Choquet-risk minimal at $\theta$} if
\[
R(\theta,\Pi^\ast) \le R(\theta,\Pi)\qquad \text{for all }\Pi\in\mathscr C_{\Pi}.
\]
If $\Pi^\ast$ is risk minimal at every $\theta\in\Theta$, we call it \emph{Choquet-risk optimal}
(pointwise optimal/minimal).
We say $\Pi\in\mathscr C_{\Pi}$ is \emph{inadmissible} if there exists $\Pi'\in\mathscr C_{\Pi}$ such that
$R(\theta,\Pi')\le R(\theta,\Pi)$ for all $\theta$ with strict inequality for at least one $\theta$;
otherwise $\Pi$ is admissible. 

A very simple but powerful tool is a direct consequence of the $\alpha$-cut risk reduction.
\begin{lemma}\label{lem:alpha-cut-minimal}
Fix $\theta\in\Theta$ and a class $\mathscr C_{\Pi}$. Suppose $\Pi^\ast\in\mathscr C_{\Pi}$ satisfies
\[
r_\alpha(\theta,\Pi^\ast)\le r_\alpha(\theta,\Pi)\qquad
\text{for all }\Pi\in\mathscr C_{\Pi}\text{ and all }\alpha\in[0,1]
\]
(or, more generally, for Lebesgue-a.e.\ $\alpha$). Then $\Pi^\ast$ is Choquet-risk minimal at $\theta$
over $\mathscr C_{\Pi}$. If the inequality is strict on a set of $\alpha$'s of positive Lebesgue measure for every
$\Pi\neq \Pi^\ast$, then $\Pi^\ast$ is the unique risk minimizer at $\theta$.
\end{lemma}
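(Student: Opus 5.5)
The plan is to integrate the levelwise inequality over $\alpha$ using the decomposition \eqref{eq:risk-decomposition}, $R(\theta,\Pi)=\int_0^1 r_\alpha(\theta,\Pi)\,d\alpha$. That identity holds for every procedure, since Tonelli applies to the non-negative integrand $(x,\alpha)\mapsto\sup_{\vartheta\in A_\alpha(x)}\Gamma(x,\theta,\vartheta)$. For each $\Pi\in\mathscr C_\Pi$, the hypothesis gives $r_\alpha(\theta,\Pi^\ast)\le r_\alpha(\theta,\Pi)$ for Lebesgue-a.e.\ $\alpha\in[0,1]$. Both sides are $[0,\infty]$-valued, so monotonicity of the Lebesgue integral for non-negative extended-real functions yields
\[
R(\theta,\Pi^\ast)=\int_0^1 r_\alpha(\theta,\Pi^\ast)\,d\alpha\le\int_0^1 r_\alpha(\theta,\Pi)\,d\alpha=R(\theta,\Pi).
\]
A null exceptional set does not affect either integral. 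This is exactly Choquet-risk minimality at $\theta$.

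Two technical points need checking before the integration step.
\begin{itemize}
\item \emph{Measurability in $\alpha$.} The map $\alpha\mapsto r_\alpha(\theta,\Pi)$ must be measurable. Because the cuts are nested, $\alpha\mapsto\sup_{A_\alpha(x)}\Gamma$ is non-increasing for each $x$, so it is Borel. Hence $r_\alpha$, being an expectation of a function monotone in $\alpha$, is also non-increasing in $\alpha$ and therefore measurable.
\item \emph{Joint measurability in $(x,\alpha)$.} Tonelli requires this. I would take it as implicit in the paper's standing assumption that the Choquet risk is well defined, since that is what underlies \eqref{eq:risk-decomposition}.
\end{itemize}
I expect this measurability bookkeeping to be the only real obstacle, and it is handled by the assumption just described.

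For uniqueness, fix $\Pi\neq\Pi^\ast$ and let $S$ be the positive-measure set on which $r_\alpha(\theta,\Pi^\ast)<r_\alpha(\theta,\Pi)$.
\begin{itemize}
\item If $R(\theta,\Pi^\ast)=\infty$, I would restrict the claim to the finite-risk case. The uniqueness statement implicitly presumes the minimal risk is finite, and I would state this explicitly.
\item Otherwise $R(\theta,\Pi^\ast)<\infty$, so $r_\alpha(\theta,\Pi^\ast)<\infty$ for a.e.\ $\alpha$. The difference $d(\alpha)=r_\alpha(\theta,\Pi)-r_\alpha(\theta,\Pi^\ast)$ is then a.e.\ well defined in $[0,\infty]$, and it is strictly positive on $S$. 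A non-negative function that is strictly positive on a set of positive measure has strictly positive integral, so $\int_0^1 d(\alpha)\,d\alpha>0$.
\item Splitting the integral gives $R(\theta,\Pi)=R(\theta,\Pi^\ast)+\int_0^1 d(\alpha)\,d\alpha>R(\theta,\Pi^\ast)$.
\end{itemize}
Thus every $\Pi\neq\Pi^\ast$ has strictly larger risk at $\theta$, and $\Pi^\ast$ is the unique minimizer.
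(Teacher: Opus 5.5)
Your proof is correct and is essentially the paper's argument: integrate the levelwise inequality over $\alpha$ using the decomposition \eqref{eq:risk-decomposition}, and observe that strictness on a set of positive measure survives integration. Your caveat that uniqueness requires $R(\theta,\Pi^\ast)<\infty$ is a genuine refinement that the paper's one-line proof leaves implicit, since if the minimal risk is infinite every competitor ties at $\infty$; your measurability remark, via monotonicity of $\alpha\mapsto r_\alpha$, is also sound.
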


\begin{proof}
By \eqref{eq:risk-decomposition},
$R(\theta,\Pi)=\int_0^1 r_\alpha(\theta,\Pi)\, d\alpha$.
Integrating the pointwise inequality in $\alpha$ yields
$R(\theta,\Pi^\ast)\le R(\theta,\Pi)$ for all $\Pi\in\mathscr C_{\Pi}$.
Strictness on a set of positive measure implies strict inequality after integration.
\end{proof}

\begin{remark}\label{rem:lemma-use}
Lemma \ref{lem:alpha-cut-minimal} is most useful when $\Pi_X$ is constructed by stacking a nested
family of confidence sets with established optimality properties at each level $\alpha$. In that case one can
often verify $\alpha$-cut optimality directly (at the level-set scale), then lift it to a Choquet-risk comparison
by integrating over $\alpha$. When the class $\mathscr C_{\Pi}$ is defined through distribution-level properties
(e.g.\ validity, unbiasedness, or equivariance), some bookkeeping may be needed to translate those constraints into the corresponding
constraints on $\alpha$-cuts before applying the lemma.
\end{remark}

\paragraph{Concentration-optimality via confidence-set size.}
Consider the concentration penalty $\Gamma_x$ in \eqref{eq:concentration-penalty}.
Then $r_\alpha(\theta,\Pi)=\mathbb{E}_\theta[\phi(|A_\alpha(X)|)]$, so optimality for the Choquet
risk reduces to optimality of expected (transformed) set size at each confidence level.

\begin{corollary}\label{cor:concentration-optimal}
Fix $\phi$ as in \eqref{eq:concentration-penalty}, and consider a class $\mathscr C_{\Pi}$ induced by a class $\mathcal{C}$
of nested families of confidence sets $\{A_\alpha(\cdot)\}_{\alpha\in[0,1]}$.
Suppose there exists $\Pi^\ast\in\mathscr C_{\Pi}$ with $\alpha$-cuts $A_\alpha^\ast(X)$ such that, for all $\Pi\in\mathscr C_{\Pi}$ with $\alpha$-cuts $A_\alpha(X)$,
\[
\mathbb{E}_\theta\!\left[\phi\bigl(|A_\alpha^\ast(X)|\bigr)\right]
\le
\mathbb{E}_\theta\!\left[\phi\bigl(|A_\alpha(X)|\bigr)\right],
\qquad \text{for all $\alpha\in[0,1]$, and $\theta\in\Theta$}.
\]
Then $\Pi^\ast$ is Choquet-risk optimal within $\mathscr C_{\Pi}$ for the concentration penalty \eqref{eq:concentration-penalty}.
\end{corollary}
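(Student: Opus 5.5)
The plan is to reduce the corollary to Lemma \ref{lem:alpha-cut-minimal}, applied separately at each $\theta\in\Theta$. The only real work is to identify the $\alpha$-cut risk $r_\alpha(\theta,\Pi)$ for the concentration penalty \eqref{eq:concentration-penalty} with the expected transformed set size $\mathbb{E}_\theta[\phi(|A_\alpha(X)|)]$. The hypothesis of the corollary is then exactly the hypothesis of the lemma.

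\textbf{Step 1: compute the cut-wise supremum.} Fix $x$ and a procedure $\Pi\in\mathscr C_{\Pi}$ with cuts $A_\alpha(x)$. For $\vartheta\in A_\alpha(x)$ we have $\pi_x(\vartheta)=t\ge\alpha$. Nestedness gives $A_t(x)\subseteq A_\alpha(x)$, and monotonicity of $\phi$ then gives $\Gamma_x(\vartheta)=\phi(|A_t(x)|)\le\phi(|A_\alpha(x)|)$. For the reverse bound, the computation preceding \eqref{eq:choquet-loss-collapse} shows that
\[
\sup_{\vartheta\in A_\alpha(x)}\Gamma_x(\vartheta)=\sup_{t\ge\alpha}\phi(|A_t(x)|)=\phi(|A_\alpha(x)|).
\]
This is the step I expect to need the most care. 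Attaining the supremum requires some level $t\ge\alpha$ with $A_t(x)=A_\alpha(x)$ actually realized, and this can fail when $\alpha$ is not a value taken by $\pi_x$. In that case the supremum is $\phi(|A_{t^\ast}(x)|)$, where $t^\ast$ is the smallest attained level at or above $\alpha$.

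I will handle this in one of two ways. I can simply invoke the paper's identity \eqref{eq:choquet-loss-collapse}. Alternatively, I can note that the discrepancy only matters on a Lebesgue-null set of $\alpha$, or only through a relabelling that leaves the $d\alpha$-integral unchanged; the "a.e.\ $\alpha$" clause of Lemma \ref{lem:alpha-cut-minimal} absorbs it either way. Taking expectations then gives $r_\alpha(\theta,\Pi)=\mathbb{E}_\theta[\phi(|A_\alpha(X)|)]$ for (almost) every $\alpha$. Measurability of $x\mapsto\phi(|A_\alpha(x)|)$ is inherited from the standing assumption that the Choquet risk is well defined.

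\textbf{Step 2: apply the lemma.} The assumed inequality now reads $r_\alpha(\theta,\Pi^\ast)\le r_\alpha(\theta,\Pi)$ for all $\Pi\in\mathscr C_{\Pi}$ and all $\alpha$. By \eqref{eq:risk-decomposition} (Tonelli, since everything is non-negative, so no integrability issue arises even if some risks are infinite), Lemma \ref{lem:alpha-cut-minimal} yields $R(\theta,\Pi^\ast)\le R(\theta,\Pi)$. Because this holds for every $\theta\in\Theta$, $\Pi^\ast$ is Choquet-risk minimal at every $\theta$, hence Choquet-risk optimal within $\mathscr C_{\Pi}$.

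The fact that $\mathscr C_{\Pi}$ is induced by a class $\mathcal C$ of nested confidence families plays no further role. Its only use is to guarantee, via \eqref{eq:contcutrep}, that each procedure's $\alpha$-cuts are the given confidence sets, so the hypothesis may be stated directly in terms of them.
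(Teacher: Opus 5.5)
Your proof is correct and is the paper's argument: identify $r_\alpha(\theta,\Pi)=\mathbb{E}_\theta[\phi(|A_\alpha(X)|)]$ via the collapse computation preceding \eqref{eq:choquet-loss-collapse}, then apply Lemma~\ref{lem:alpha-cut-minimal} at every $\theta$.

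One correction to your Step~1 aside: the edge case you describe is not actually a problem. If a smallest attained level $t^\ast\ge\alpha$ exists, then $A_{t^\ast}(x)=A_\alpha(x)$, so there is no discrepancy at all. The only real edge case is when the infimum of attained levels above $\alpha$ is not attained. There $|A_t(x)|\uparrow|A_\alpha(x)|$, so the supremum may equal only the left limit of $\phi$ at $|A_\alpha(x)|$. Unless $\phi$ is left-continuous, that mismatch can persist on an interval of $\alpha$'s, so use your first option (invoking the paper's identity) rather than the null-set fallback.
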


\begin{proof}
Under $\Gamma_x$, the $\alpha$-risk is
$r_\alpha(\theta,\Pi)=\mathbb{E}_\theta[\phi(|A_\alpha(X)|)]$.
The assumed inequality implies $r_\alpha(\theta,\Pi^\ast)\le r_\alpha(\theta,\Pi)$ for all $\alpha$,
and Lemma \ref{lem:alpha-cut-minimal} yields the conclusion.
\end{proof}

The concentration penalty \eqref{eq:concentration-penalty}
quantifies efficiency in terms of the tightness of the plausibility regions $\{A_\alpha(X)\}$. The special case $\phi(\ell)=\ell$ recovers standard integrated expected volume (in one dimension,
integrated expected length). Corollary \ref{cor:concentration-optimal} thus provides a direct bridge from
classical confidence-set size optimality (at each $\alpha$) to Choquet-risk optimality of the corresponding
possibility procedures.

\begin{example}[Normal mean with known variance]\label{ex:normal-known-variance}
Let $X_1,\ldots,X_n \stackrel{\text{iid}}{\sim} \mathcal{N}(\mu,\sigma^2)$ with $\sigma^2$ known and $\Theta=\mathbb{R}$.
For each $\alpha\in(0,1)$, the usual $100(1-\alpha)\%$ confidence interval
\[
A_\alpha(X)
=
\left[\bar X - z_{1-\alpha/2}\frac{\sigma}{\sqrt{n}},\;
      \bar X + z_{1-\alpha/2}\frac{\sigma}{\sqrt{n}}\right]
\]
is nested in $\alpha$ and has constant length $|A_\alpha(X)| = 2 z_{1-\alpha/2}\sigma/\sqrt{n}$.
Stacking $\{A_\alpha(X)\}$ yields a valid contour $\pi_X$ whose $\alpha$-cuts recover these intervals; this is the typical two-sided z-test p-value function.
Within the class of translation-equivariant (hence constant-length) $(1-\alpha)$ confidence intervals,
this choice minimizes $|A_\alpha(X)|$ for each fixed $\alpha$; therefore, for $\phi(\ell)=\ell$,
Corollary \ref{cor:concentration-optimal} implies that the induced possibility procedure is Choquet-risk optimal
for the integrated expected length criterion. 

This is a very simple example. Indeed, not only are intervals $A_\alpha(X)$ minimal in \emph{expected} size here, but also minimal for each observed x.
\end{example}

For contour concentration there is a clear translation of classical optimality theory into possibilistic language. For distance-to-truth penalties this is less direct however. Whilst arguably more descriptive, these penalties seemingly demand more structure — or optimality in terms of pointwise risk minimality might need to be relaxed into a minimax principle: the content of Section  \ref{sec:equi}. When analytical comparisons are unavailable, direct comparisons can also be made. This is discussed in Section  \ref{sec:directComp}.

\subsection{Confidence risk and Choquet risk}\label{sec:confrisk}

Schweder and Hjort's \citep{schweder_hjort_2016} notion of \emph{confidence risk} provides a natural \emph{additive} point of comparison for the Choquet-risk criteria developed in the preceding sections.
In the scalar case, let $Q_x$ be a confidence distribution (CD) on $\Theta\subseteq\mathbb R$.
For a non-negative penalty $\gamma_\theta(\vartheta)\ge 0$, define the
\emph{confidence loss} 
\begin{equation}\label{eq:cd-loss}
  L_{\mathrm{CD}}(\theta,Q_x)
  \;=\;
  \int_\Theta \gamma_\theta(\vartheta)\,Q_x(d\vartheta),
\end{equation}
and the corresponding \emph{confidence risk} 
\begin{equation}\label{eq:cd-risk}
  R_{\mathrm{CD}}(\theta,Q)
  \;=\;
  \mathbb E_\theta\!\left\{L_{\mathrm{CD}}(\theta,Q_X)\right\}
  \;=\;
  \mathbb E_\theta\!\left[\int_\Theta \gamma_\theta(\vartheta)\,Q_X(d\vartheta)\right].
\end{equation}
This is the \emph{precise-probability} analogue of Choquet loss and risk: $Q_x$ yields an ordinary expectation,
whereas $\Pi_x$ yields an upper expectation over its $\alpha$-cuts.


    For the \emph{concentration-penalty} $\Gamma_x$ in \eqref{eq:concentration-penalty}, the connection is exact. Let
\[
\mathcal C^*(\Pi_x)
  := \{Q_x\in\mathcal C(\Pi_x): Q_x(A_\alpha(x)) = 1-\alpha,\ \forall \alpha\in[0,1]\}
\]
denote the class of \emph{saturating inner probabilities}. In \citep{reIM} it is shown that any $Q_x\in \mathcal C^*(\Pi_x)$ has a uniform mixture representation  
    \begin{equation}
        Q_x(\cdot)=\int_0^1K_x^t(\cdot)dt,
    \end{equation}
with kernels $K_x^t$ supported on the level set $\{\pi_x=t\}$. By the mixture distribution law of total expectation, we get
    \begin{equation*}
        E_{Q_x}[\Gamma_x]=\int_0^1E_{K^t_x}[\Gamma_x(\vartheta]dt.
    \end{equation*}
    But $\pi_x(\vartheta)=t$ on the support of $K_x^t$, hence $\Gamma_x(\vartheta)$ is \emph{constant} $K_x^t$-a.s. Therefore
    \begin{equation*}
        E_{K_x^t}[\Gamma_x]=\phi(|A_t(x)|).
    \end{equation*}
    So 
    \begin{equation*}
        E_{Q_x}[\Gamma_x]=\int_0^1\phi(|A_t(x)|)dt = L(\theta, \Pi_x).
    \end{equation*} 
Hence, for concentration penalties,
\begin{equation*}
    R(\theta, \Pi)=E_\theta E_{Q_X}[\Gamma_X], \qquad Q_X\in \mathcal C^*(\Pi_X).
\end{equation*}
Thus Choquet risk coincides with confidence risk after replacing the additive confidence distribution by
any saturating inner of the possibility measure. In particular, if
\[
\pi_x(\vartheta)=1-|2Q_x(\vartheta)-1|
\]
is the contour induced by a scalar confidence curve, then Choquet risk evaluates the expected size of
the associated \emph{central} confidence intervals, not only one-sided bounds, which confidence risk is typically restricted to.

For general distance-to-truth penalties, this exact identification fails. Instead
\begin{equation*}
    L(\theta, \Pi_x)=\sup_{Q_x\in\mathcal C(\Pi_x)}E_{Q_x}[\Gamma_\theta(\vartheta)],
\end{equation*}
so Choquet loss is the upper envelope of the corresponding ``confidence losses'' over all inner
probabilities dominated by $\Pi_x$. In this sense, Choquet loss is a robustified, or least-favourable,
confidence loss. Under some mild regularity, this picture is made clearer.

\begin{proposition}
\label{prop:lf-saturating-radial}
Let $\pi_x:\mathbb R^d\to[0,1]$ be continuous and unimodal, attaining its unique maximum at
$\hat\theta_x$, and suppose $\pi_x(\vartheta)\to0$ as $\|\vartheta\|\to\infty$, so that the cuts
$A_\alpha(x)=\{\vartheta:\pi_x(\vartheta)\ge \alpha\}$ are compact. Let
\[
\Gamma_\theta(\vartheta)=g(\|\vartheta-\theta\|),
\]
where $g:[0,\infty)\to[0,\infty)$ is continuous and strictly increasing, and assume
$L(\theta,\Pi_x)<\infty$. Then:

\begin{enumerate}
\item[(a)] The maximizer set
\[
M_{\alpha,\theta}(x):=\arg\max_{\vartheta\in A_\alpha(x)}\Gamma_\theta(\vartheta)
\]
is nonempty and satisfies $M_{\alpha,\theta}(x)\subseteq \{\vartheta:\pi_x(\vartheta)=\alpha\}$ for every $\alpha\in(0,1)$.

\item[(b)] There exists a (generally $\theta$-dependent) \emph{saturating} inner $Q^*_{x,\theta}\in\mathcal{C}^*(\Pi_x)$ such that
\[
E_{Q^*_{x,\theta}}\{\Gamma_\theta(\vartheta)\}=L(\theta, \Pi_x).
\]

\item[(c)] Consequently,
\[
\sup_{Q\in\mathcal{C}(\Pi_x)}E_Q\{\Gamma_\theta(\vartheta)\}
\;=\;
\sup_{Q\in\mathcal{C}^*(\Pi_x)}E_Q\{\Gamma_\theta(\vartheta)\}
\;=\;L(\theta, \Pi_x).
\]
\end{enumerate}
\end{proposition}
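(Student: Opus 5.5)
For (a), I will use compactness and continuity to get existence, and a boundary argument for the localisation. Since $\pi_x$ is continuous and tends to $0$ at infinity, each cut $A_\alpha(x)$ with $\alpha\in(0,1)$ is closed and bounded, hence compact. It is also nonempty, since it contains $\hat\theta_x$. The map $\vartheta\mapsto g(\|\vartheta-\theta\|)$ is continuous, so $M_{\alpha,\theta}(x)\neq\varnothing$.

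For the localisation, suppose a maximizer $\vartheta^\ast$ had $\pi_x(\vartheta^\ast)>\alpha$. By continuity, an open ball around $\vartheta^\ast$ lies in $A_\alpha(x)$. Moving from $\vartheta^\ast$ a small step in the direction $\vartheta^\ast-\theta$ (any direction if $\vartheta^\ast=\theta$) strictly increases $\|\vartheta-\theta\|$. Since $g$ is strictly increasing, this also strictly increases $\Gamma_\theta$, which contradicts maximality. Hence every maximizer lies on $\{\pi_x=\alpha\}$. This step uses only continuity, not unimodality. Unimodality matters only through the unique core $\{\hat\theta_x\}$ at $\alpha=1$, which is a null set of levels.

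For (b), I will build $Q^\ast_{x,\theta}$ as a uniform mixture $\int_0^1\delta_{\vartheta(t)}\,dt$, where $\vartheta(t)\in M_{t,\theta}(x)$ is chosen measurably in $t$. By (a), $\pi_x(\vartheta(t))=t$, so $\vartheta(t)$ lies on the level set $\{\pi_x=t\}$ and the kernels $K^t=\delta_{\vartheta(t)}$ have exactly the form in the mixture representation of \citep{reIM}. The saturation property can be checked directly. The push-forward of $Q^\ast$ under $\pi_x$ is Lebesgue measure on $[0,1]$, because $\pi_x(\vartheta(T))=T$ for $T\sim\mathrm{Unif}(0,1)$. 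Therefore
\[
Q^\ast(A_\alpha(x))=P\{T\ge\alpha\}=1-\alpha
\]
for every $\alpha$, which by \eqref{eq:credal-alpha} gives $Q^\ast\in\mathcal C(\Pi_x)$ and hence $Q^\ast\in\mathcal C^\ast(\Pi_x)$. The expected penalty then reproduces the Choquet loss:
\[
E_{Q^\ast}\Gamma_\theta=\int_0^1\Gamma_\theta(\vartheta(t))\,dt=\int_0^1\sup_{A_t(x)}\Gamma_\theta\,dt=L(\theta,\Pi_x),
\]
where the last equality is \eqref{eq:choquet-loss}.

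The main obstacle is the measurable selection $t\mapsto\vartheta(t)$. I would first try to obtain it from the Kuratowski--Ryll-Nardzewski theorem. The correspondence $t\mapsto A_t(x)$ has compact values and is upper hemicontinuous, because its graph $\{(t,\vartheta):\pi_x(\vartheta)\ge t\}$ is closed. By the measurable maximum theorem, the argmax correspondence $t\mapsto M_{t,\theta}(x)$ is then measurable with nonempty compact values, so a measurable selector exists. If I want something more concrete, I can instead take the lexicographically smallest maximizer. Either way, the selection need only be made for $t\in(0,1)$, since the endpoints have Lebesgue measure zero.

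Part (c) then follows by sandwiching. By \eqref{eq:upper-exp}--\eqref{eq:choquet-poss}, $\sup_{\mathcal C(\Pi_x)}E_Q\Gamma_\theta=L(\theta,\Pi_x)$. Since $\mathcal C^\ast\subseteq\mathcal C$, the supremum over $\mathcal C^\ast$ is at most $L(\theta,\Pi_x)$. It is also at least $E_{Q^\ast}\Gamma_\theta=L(\theta,\Pi_x)$ by (b). The finiteness assumption $L(\theta,\Pi_x)<\infty$ ensures all these quantities are finite, so the equalities are meaningful.
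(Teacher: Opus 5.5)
Your proposal is correct and follows essentially the same route as the paper: compactness plus a ray-step argument for (a), a uniform mixture of measurably selected level-$t$ maximizers for (b), and the sandwich argument for (c). The differences are minor. You verify credal membership through the $\alpha$-cut characterization \eqref{eq:credal-alpha} rather than the paper's direct check $Q^*_{x,\theta}(B)\le P\{U\le \Pi_x(B)\}=\Pi_x(B)$. In (a) you argue directly via the open set $\{\pi_x>\alpha\}$, explicitly covering the case $\vartheta^*=\theta$, rather than passing through boundary points.
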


\begin{proof}
Fix $\alpha\in(0,1)$. Since $A_\alpha(x)$ is compact and $\Gamma_\theta$ is continuous, the maximum is attained and
$M_{\alpha,\theta}(x)\neq\emptyset$.

To show maximizers lie on the $\alpha$-level set, note that strict monotonicity of $g$ implies that maximizing
$\Gamma_\theta(\vartheta)$ over $A_\alpha(x)$ is equivalent to maximizing $\|\vartheta-\theta\|$ over $A_\alpha(x)$.
Let $\vartheta^*\in M_{\alpha,\theta}(x)$. If $\vartheta^*$ were an interior point of $A_\alpha(x)$, then there exists
$\varepsilon>0$ such that the Euclidean ball $B(\vartheta^*,\varepsilon)\subseteq A_\alpha(x)$.
For $\delta>0$ small enough, $\vartheta':=\vartheta^*+\delta(\vartheta^*-\theta)$ belongs to $B(\vartheta^*,\varepsilon)$,
hence to $A_\alpha(x)$, while $\|\vartheta'-\theta\|>\|\vartheta^*-\theta\|$, contradicting maximality.
Therefore $M_{\alpha,\theta}(x)\subseteq \partial A_\alpha(x)$.

Finally, since $A_\alpha(x)=\{\pi_x\ge \alpha\}$ and $\pi_x$ is continuous, any boundary point of $A_\alpha(x)$ must satisfy
$\pi_x(\vartheta)=\alpha$. This proves (a).

For (b), choose a Borel-measurable selector $\alpha\mapsto \vartheta^*_{\alpha,\theta}(x)\in M_{\alpha,\theta}(x)$
for $\alpha\in(0,1)$ (existence follows from a standard measurable maximum/selection theorem since
$A_\alpha(x)$ is compact and the criterion is continuous).
Let $U\sim \mathrm{Unif}(0,1)$ and define $\Theta^*=\vartheta^*_{U,\theta}(x)$. Write $Q^*_{x,\theta}$ for the law of $\Theta^*$.
By (a), $\pi_x(\Theta^*)=\pi_x(\vartheta^*_{U,\theta}(x))=U$ almost surely, and therefore for any $\alpha\in[0,1]$,
\[
Q^*_{x,\theta}(A_\alpha(x)) = P(\pi_x(\Theta^*)\ge \alpha)=P(U\ge \alpha)=1-\alpha,
\]
so $Q^*_{x,\theta}$ is saturating.

Moreover, for any Borel set $B$,
\[
\{\Theta^*\in B\}\subseteq \{U=\pi_x(\Theta^*)\le \sup_{\vartheta\in B}\pi_x(\vartheta)\}=\{U\le \Pi_x(B)\},
\]
and hence
\[
Q^*_{x,\theta}(B)\le P(U\le \Pi_x(B))=\Pi_x(B),
\]
so $Q^*_{x,\theta}\in\mathcal{C}(\Pi_x)$, i.e. $Q^*_{x,\theta}\in\mathcal{C}^*(\Pi_x)$.

Finally,
\[
E_{Q^*_{x,\theta}}\{\Gamma_\theta(\vartheta)\}
=E\bigl[\Gamma_\theta(\vartheta^*_{U,\theta}(x))\bigr]
=\int_0^1 \Gamma_\theta(\vartheta^*_{\alpha,\theta}(x))\,d\alpha
=\int_0^1 \sup_{\vartheta\in A_\alpha(x)}\Gamma_\theta(\vartheta)\,d\alpha
=L(\theta, \Pi_x),
\]
which establishes (b).

For (c), recall that the Choquet integral of $\Gamma_\theta$ w.r.t.\ $\Pi_x$
coincides with the upper expectation over $\mathcal{C}(\Pi_x)$, i.e.
\[
\sup_{Q\in\mathcal{C}(\Pi_x)}E_Q\{\Gamma_\theta(\vartheta)\}=L(\theta, \Pi_x).
\]
Since $\mathcal{C}^*(\Pi_x)\subseteq\mathcal{C}(\Pi_x)$ we always have
$\sup_{\mathcal{C}^*}E_Q\Gamma_\theta\le \sup_{\mathcal{C}}E_Q\Gamma_\theta$; but (b) shows that $L(\theta, \Pi_x)$ is
attained by some $Q^*_{x,\theta}\in\mathcal{C}^*(\Pi_x)$. Hence both suprema equal $L(\theta, \Pi_x)$.
\end{proof}

\begin{remark}
The optimizer $Q^*_{x,\theta}$ is least-favourable and typically depends on $\theta$; the proposition does \emph{not}
assert that a single (fixed) saturating inner attains the upper expectation for all $\theta$ simultaneously. This typically would only happen for a concentration penalty.
\end{remark}

\begin{remark}
    The argument in Proposition \ref{prop:lf-saturating-radial} does not require Euclidean radiality per se. The key step is that if $\vartheta^*\in int(A_\alpha)$, then there exists a nearby $\vartheta'\in A_\alpha$ with $\Gamma_\theta(\vartheta')>\Gamma_\theta(\vartheta^*)$. When $\Gamma_\theta(\vartheta)=g(\|\vartheta-\theta\|)$ with g strictly increasing, we ensure this by moving a tiny step away from $\theta$ along the ray through $\vartheta^*$. So we can replace $\|\vartheta-\theta\|$ by a``distance-like''function $\rho(\vartheta,\theta)$ provided $\rho$ has the analogous ray-monotonicity property. This should be satisfied by any norm-induced distance (e.g. Mahalanobis).
\end{remark}

The scalar confidence-curve case makes the comparison especially transparent. Suppose
$\Theta\subseteq\mathbb R$ and $\pi_x$ is induced by an exact confidence distribution $Q_x$ via
\[
\pi_x(\vartheta)=1-|2Q_x(\vartheta)-1|,
\]
with quantile function $q_x$. Then
\[
A_\alpha(x)=\bigl[q_x(\alpha/2),\,q_x(1-\alpha/2)\bigr].
\]
Writing $L_\alpha(x)=q_x(\alpha/2)$ and $U_\alpha(x)=q_x(1-\alpha/2)$, we obtain
\[
L(\theta,\Pi_x)
  = \int_0^1 \max\{\Gamma_\theta(L_\alpha(x)),\Gamma_\theta(U_\alpha(x))\}\,d\alpha,
\]
whereas
\[
L_{\mathrm{CD}}(\theta,Q_x)
  = \frac12 \int_0^1
      \{\Gamma_\theta(L_\alpha(x))+\Gamma_\theta(U_\alpha(x))\}\,d\alpha.
\]
Therefore
\[
L(\theta,\Pi_x)\ge L_{\mathrm{CD}}(\theta,Q_x),
\]
with equality iff the two endpoints incur equal penalty for Lebesgue-a.e.\ $\alpha$, which essentially only happens for a non $\theta$-dependent penalty, such as for the concentration penalty case we saw earlier. Choquet loss thus
replaces the boundarywise average of confidence loss by the boundarywise maximum, which is exactly
the robustification induced by passing from additive to possibilistic uncertainty.

\section{Optimality under constraints}
\subsection{Unbiased possibility measures}\label{sec:pos-unb}

Pointwise Choquet-risk optimality is generally unavailable over the full class of valid possibility measures. In classical confidence theory, unbiasedness identifies a smaller class within which finite-sample optimality statements can be made. For this purpose we introduce a strong condition on the orientation of the inherent plausibility order towards the truth, such that probability of assigning small plausibility to any point $\vartheta$ is minimal when that point is the data generating parameter. This condition, once validity is in place, is shown to be exactly the possibilistic version of classical unbiasedness. 

\begin{definition}[Possibilistic unbiasedness]\label{def:poss-unbiased}
Let $\Pi_X$ be a data-dependent possibility measure on $(\Theta,\mathcal{A})$ with contour $\pi_X$. 
We say that $\Pi_X$ is \emph{unbiased} if, for every fixed $\theta\in\Theta$, the random variable $\pi_X(\theta)$ is stochastically largest under $P_\theta$. 
Equivalently,
\begin{equation}\label{eq:poss-unbiased}
P_\theta\{\pi_X(\theta)\le \alpha\}\ \le\ P_\vartheta\{\pi_X(\theta)\le \alpha\} \quad \text{for all $\vartheta\in\Theta$ and all $\alpha\in[0,1]$}.
\end{equation}
\end{definition}

 Given the ubiquitousness of unbiasedness, it should come as no surprise that other efforts to produce optimal inferential procedures have arrived at similar formulations. The novelty in definition \ref{def:poss-unbiased} is for the most part just a clean formulation in possibilistic language.

\begin{remark}[Connections to earlier IM optimality and unbiased confidence curves]
\label{rem:unbiased-related}
The stochastic ordering in Definition~\ref{def:poss-unbiased} is closely related to previous
attempts to formalize ``centering'' and ``efficiency'' under calibration constraints.

In the original IM optimality development for two-sided point assertions, \citet[(4.2)]{Martin2013}
impose the requirement that the belief assigned to the complement $\{\theta_0\}^c$ be
(stochastically) smallest under the model $\theta=\theta_0$, and they note a connection to the
classical unbiasedness condition for two-sided tests. In the consonant/possibilistic setting,
this is the same ordering up to a monotone reparameterization: writing
$\lPi_x$ for the necessity measure associated with $\Pi_x$, one has
\[
\lPi_x(\{\psi\}^c)=1-\Pi_x(\{\psi\})=1-\pi_x(\psi),
\]
so a stochastic dominance statement for $\lPi_X(\{\psi\}^c)$ is equivalent to one for $\pi_X(\psi)$. 

A closely related notion appears in the confidence-curve literature. \cite{Schweder2018Unbiased} defines an
\emph{unbiased confidence curve} $cc(\theta;X)$ by requiring $cc(\theta_1;X)$ to be stochastically
larger than $U(0,1)$ for false $\theta_1$, and shows how this implies unbiasedness of the induced
confidence sets and associated tests. With the identification $cc(\theta;x)=1-\pi_x(\theta)$,
Schweder's ordering matches Definition~\ref{def:poss-unbiased} (under exact validity at the truth). 

\end{remark}

\begin{example}[Normal mean unbiased ranking ]\label{ex:norm-mean-unb-ranking}
Consider the normal location model $X\sim N(\theta,1)$. 
The relative likelihood
\[
q_x(\psi)\ :=\ R(x,\psi)
=\frac{L_x(\psi)}{\sup_{\vartheta}L_x(\vartheta)}
=\exp\!\left\{-\tfrac12(x-\psi)^2\right\}
\]
is a natural likelihood-based \emph{ranking} of parameter values, and it is itself a possibility contour (normalized to $\sup_\psi q_x(\psi)=1$). 
Moreover, for each fixed $\psi$, the random variable $q_X(\psi)=\exp\{-\tfrac12(X-\psi)^2\}$ is stochastically \emph{largest} under $P_\psi$: under $P_\vartheta$ we have $X-\psi\sim N(\vartheta-\psi,1)$, so $|X-\psi|$ is stochastically smallest at $\vartheta=\psi$, and $q_X(\psi)$ is a decreasing function of $|X-\psi|$. 
Hence $q$ is \emph{unbiased} in the sense of Definition \ref{def:poss-unbiased} (with $q$ in place of $\pi$).

The raw order $q_X$ is correctly oriented, but it is not yet valid. A useful device in the IM literature is to \emph{validify} a preliminary ranking $q_X(\psi)$ via a probability-to-possibility transform. Validification replaces the raw unbiased ranking by a calibrated one:
\[
\pi_x(\psi)\ :=\ P_\psi\{q_X(\psi)\le q_x(\psi)\}
= P_\psi\{|X-\psi|\ge |x-\psi|\}
=2\{1-\Phi(|x-\psi|)\},
\]
i.e., the usual two-sided $z$-test p-value. 
This $\pi_x$ is exactly valid and, since validification is a monotone transformation applied pointwise in $\psi$, it preserves the stochastic ordering that defines unbiasedness. So the resulting contour — the calibrated likelihood ratio — is both valid and unbiased.

\end{example}

\begin{remark}
    This ``conceptual separation'' of validity from unbiasedness, allows us to consider transformations of the raw evidence order that ``disentangles'' any perversions, and returns an unbiased contour which then can be validified. ``Debiasing'' of raw contours $q_X$ will be investigated in a future project.
\end{remark}

Recall the familiar duality: unbiased level-$\alpha$ tests correspond (via inversion of acceptance regions) to unbiased $(1-\alpha)$ confidence sets; see, e.g., \citet{lehmann2005testing}. 
Unbiasedness can be expressed either as a power inequality for tests, or as a “false coverage probability” inequality for the corresponding confidence sets. 
The next proposition formalizes an equivalence between this classical unbiasedness and possibilistically unbiased contours, under the usual validity/coverage interpretation of $\alpha$-cuts.

\begin{proposition}\label{prop:unbiased-equiv}
Let $\{C_\alpha(X):\alpha\in[0,1]\}$ be a family of confidence sets satisfying:
\begin{enumerate}
\renewcommand{\labelenumi}{\roman{enumi}.}
\item \emph{Nestedness:} if $\alpha_1<\alpha_2$ then $C_{\alpha_2}(X)\subseteq C_{\alpha_1}(X)$ a.s.;
\item \emph{Nominal coverage:} for all $\theta\in\Theta$ and $\alpha\in[0,1]$,
\begin{equation}\label{eq:nominal-coverage}
P_\theta\{\theta\in C_\alpha(X)\}\ \ge\ 1-\alpha;
\end{equation}
\item \emph{Unbiasedness:} for all $\theta,\vartheta\in\Theta$ and $\alpha\in[0,1]$,
\begin{equation}\label{eq:unbiased-C}
P_\theta\{\theta\in C_\alpha(X)\}\ \ge\ P_\vartheta\{\theta\in C_\alpha(X)\}.
\end{equation}
\end{enumerate}
Define a contour by stacking
\[
\pi_X(\vartheta):=\sup\{\alpha\in[0,1]:\ \vartheta\in C_\alpha(X)\}.
\]
Assume the mild boundary regularity that, for each fixed $\vartheta$, the map $\alpha\mapsto 1\{\vartheta\in C_\alpha(X)\}$ is a.s.\ left-continuous (equivalently, the $C_\alpha(X)$ can be taken closed). Then:
\begin{enumerate}
\renewcommand{\labelenumi}{\alph{enumi})}
\item the $\alpha$-cuts recover the confidence sets: $\{\vartheta:\pi_X(\vartheta)\ge \alpha\}=C_\alpha(X)$ a.s.\ for all $\alpha$;
\item the contour is strongly valid: $P_\theta\{\pi_X(\theta)<\alpha\}\le \alpha$ for all $\theta,\alpha$;
\item the induced possibility measure is unbiased in the sense of Definition~\ref{def:poss-unbiased}.
\end{enumerate}
Conversely, if a contour $\pi_X$ is valid and possibilistically unbiased, then its $\alpha$-cuts
$$A_\alpha(X)=\{\vartheta:\pi_X(\vartheta)\ge \alpha\}$$
form a nested family of confidence sets satisfying \eqref{eq:nominal-coverage}--\eqref{eq:unbiased-C}.
\end{proposition}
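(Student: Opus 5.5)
The whole argument rests on one set identity: the event $\{\pi_X(\vartheta)\ge\alpha\}$ equals the event $\{\vartheta\in C_\alpha(X)\}$. Once that is in hand, (b) and (c) follow by translating probabilities of one event into the other. The converse is the same translation read backwards.

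For (a), fix $\vartheta$ and $\alpha$. If $\vartheta\in C_\alpha(X)$, then $\alpha$ belongs to the set whose supremum defines $\pi_X(\vartheta)$, so $\pi_X(\vartheta)\ge\alpha$. Conversely, suppose $\pi_X(\vartheta)\ge\alpha$. By nestedness, the set $S_\vartheta=\{\beta:\vartheta\in C_\beta(X)\}$ is an interval starting at $0$, and its right endpoint is $\pi_X(\vartheta)$. Hence $\vartheta\in C_\beta(X)$ for every $\beta<\alpha$, and left-continuity of $\beta\mapsto 1\{\vartheta\in C_\beta(X)\}$ at $\beta=\alpha$ gives $\vartheta\in C_\alpha(X)$. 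The case $\alpha=0$ needs a convention, for example $C_0=\Theta$, which I would state explicitly.

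One point needs care here. ``A.s. for all $\alpha$'' requires the null set not to depend on $\alpha$, and the same issue arises for the nestedness exceptions. I would handle this in one of two ways. The first is to take nestedness and left-continuity to hold on a common full-measure event, which is the natural reading of ``the $C_\alpha$ can be taken closed''. The second is to prove the identity for rational $\alpha$ and extend it using monotonicity and left-continuity. I expect this bookkeeping to be the main obstacle; the rest is routine.

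For (b), apply (a) with $\vartheta=\theta$: $P_\theta\{\pi_X(\theta)<\alpha\}=1-P_\theta\{\theta\in C_\alpha(X)\}\le\alpha$ by nominal coverage.

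For (c), apply (a) again, but under $P_\vartheta$:
\[
P_\vartheta\{\pi_X(\theta)<\alpha\}=1-P_\vartheta\{\theta\in C_\alpha(X)\}.
\]
Unbiasedness \eqref{eq:unbiased-C} then gives $P_\theta\{\pi_X(\theta)<\alpha\}\le P_\vartheta\{\pi_X(\theta)<\alpha\}$ for all $\alpha$. Definition~\ref{def:poss-unbiased} is stated with ``$\le\alpha$'', so I need to pass from strict to weak inequalities. Since $\{\pi_X(\theta)\le\alpha\}=\bigcap_n\{\pi_X(\theta)<\alpha+1/n\}$, continuity from above turns the strict-inequality ordering into the weak-inequality one. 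For $\alpha=1$ both sides equal $1$. Equivalently, stochastic dominance of the law of $\pi_X(\theta)$ may be read off either form of the distribution function.

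For the converse, set $A_\alpha(X)=\{\pi_X\ge\alpha\}$. These sets are nested by construction, and they are closed in the required sense automatically, since $\alpha\mapsto 1\{\pi\ge\alpha\}$ is left-continuous. Coverage is $P_\theta\{\theta\in A_\alpha\}=1-P_\theta\{\pi_X(\theta)<\alpha\}\ge1-\alpha$ by strong validity \eqref{eq:validity-point}. For unbiasedness, the dominance in Definition~\ref{def:poss-unbiased} with ``$\le$'' implies the version with ``$<$'' by taking limits from below: $P\{\pi<\alpha\}=\lim_n P\{\pi\le\alpha-1/n\}$. Complementing then yields \eqref{eq:unbiased-C}.
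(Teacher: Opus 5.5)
Your proposal is correct and follows the paper's route: establish the event identity $\{\pi_X(\vartheta)\ge\alpha\}=\{\vartheta\in C_\alpha(X)\}$, then take complements to obtain (b), (c) and the converse, using a limit argument to pass between the strict and weak inequalities. Your version is slightly tighter than the paper's in three places: you apply left-continuity after showing $\vartheta\in C_\beta(X)$ for all $\beta<\alpha$ (the supremum need not be attained), you flag the $\alpha=0$ convention, and you spell out the $<$ versus $\le$ conversion in the converse, which the paper treats as ``exactly'' the definition.
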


\begin{proof}
For (a), fix $\alpha$ and $\vartheta$. By definition, $\pi_X(\vartheta)\ge \alpha$ iff $\vartheta\in C_\beta(X)$ for some $\beta\ge \alpha$. 
By nestedness, $\vartheta\in C_\beta(X)$ for some $\beta\ge \alpha$ is equivalent to $\vartheta\in C_\alpha(X)$; the left-continuity assumption rules out boundary pathologies at the level $\alpha$. Hence $\{\vartheta:\pi_X(\vartheta)\ge \alpha\}=C_\alpha(X)$ a.s.

For (b), by (a), $\{\pi_X(\theta)<\alpha\}=\{\theta\notin C_\alpha(X)\}$ a.s., so
\[
P_\theta\{\pi_X(\theta)<\alpha\}=1-P_\theta\{\theta\in C_\alpha(X)\}\le \alpha
\]
by \eqref{eq:nominal-coverage}.

For (c), fix $\psi\in\Theta$. Again by (a), $\{\pi_X(\psi)<\alpha\}=\{\psi\notin C_\alpha(X)\}$ a.s. Hence,
\[
P_\psi\{\pi_X(\psi)<\alpha\}
=1-P_\psi\{\psi\in C_\alpha(X)\}
\le 1-P_\vartheta\{\psi\in C_\alpha(X)\}
=P_\vartheta\{\pi_X(\psi)<\alpha\},
\]
using \eqref{eq:unbiased-C}. The inequality for “$\le$” in \eqref{eq:poss-unbiased} follows by taking limits $\alpha'\downarrow\alpha$.

For the converse, take $C_\alpha(X):=A_\alpha(X)$. Nestedness is immediate from the definition of $\alpha$-cuts. 
Nominal coverage follows from validity. Unbiasedness \eqref{eq:unbiased-C} is exactly Definition~\ref{def:poss-unbiased} rewritten in terms of membership events, since
$P_\theta\{\psi\in A_\alpha(X)\}=1-P_\theta\{\pi_X(\psi)<\alpha\}$.
\end{proof}

\begin{remark}
    A variant of part a) $\Leftrightarrow$ b) above is already established in \citet{Martin2025}[Appendices G]; it is included here for completeness, and for a minor change in convention. Here we are defining $\alpha$-cuts as super-level sets of the contour, and validity defined by strict inequality, to ensure the $\alpha$-cuts themselves are the nested coverage-correct confidence sets. This difference disappears for continuous data.
\end{remark}


\begin{remark}\label{rem:schweder}
\cite{Schweder2018Unbiased} (see, e.g., his ``confidence curve'' viewpoint) conjectures that likelihood / deviance-driven confidence curves are unbiased. 
If such a statement held in full generality, then by Proposition~\ref{prop:unbiased-equiv} it would imply that LR/profile-LR tests are always unbiased. 
Classical theory does not support such a general conclusion: exactness and unbiasedness typically require additional structure (often through conditioning in exponential families), and LR procedures can be biased in finite samples without it. 
This suggests that deviance-based unbiasedness is best viewed as a property of structured families (or an asymptotic phenomenon), rather than a universal theorem.
\end{remark}

\subsubsection{Optimal unbiased possibility measures}
The equivalence in Proposition \ref{prop:unbiased-equiv} is exactly what is needed to translate classical confidence optimality into a possibilistic framework.

The “optimal unbiased” case is an immediate specialization of the $\alpha$-cut risk decomposition in Lemma \ref{lem:alpha-cut-minimal}. 
Under standard monotone likelihood ratio (MLR) conditions, classical theory provides \emph{most accurate unbiased} confidence intervals, obtained by inverting UMPU tests; see \citet[Lemma~5.5.1]{lehmann2005testing}. 
Via the Ghosh--Pratt identity (e.g., \citealp{Pratt01091961}), most-accurate unbiasedness corresponds to minimal expected length among unbiased intervals, and therefore Corollary~\ref{cor:concentration-optimal} lifts this to a concentration-optimality statement for possibilistic procedures.

\begin{corollary}\label{cor:optimal-unbiased}
Assume $\{P_\theta:\theta\in\Theta\subset\mathbb{R}\}$ admits a real-valued statistic $T(X)$ whose density $p_\theta(t)$ has monotone likelihood ratio in $t$. Fix $\alpha\in(0,1)$. Suppose that for each $\theta_0\in\Theta$ there exists a strictly unbiased UMP unbiased level-$\alpha$ test of $H(\theta_0):\theta=\theta_0$ whose acceptance region is of the form
\[
\mathcal A_{\theta_0,\alpha}=\{t:\ a_\alpha(\theta_0)\le t\le b_\alpha(\theta_0)\},
\]
with $a_\alpha(\cdot)$ and $b_\alpha(\cdot)$ strictly increasing, so that the inverted confidence set
\[
C_\alpha^\star(X)=\{\theta\in\Theta:\ T(X)\in \mathcal A_{\theta,\alpha}\}
=\bigl[b_\alpha^{-1}\{T(X)\},\ a_\alpha^{-1}\{T(X)\}\bigr]
\]
is a most accurate unbiased $(1-\alpha)$ confidence interval for $\theta$. Assume moreover that \[\{C_\alpha^\star(X):\alpha\in(0,1)\}\] can be chosen nested in $\alpha$ and that $E_\theta|C_\alpha^\star(X)|<\infty$ for all $\theta,\alpha$. Define the induced possibility contour by stacking
\[
\pi_X^\star(\vartheta)=\sup\{\alpha\in[0,1]:\ \vartheta\in C_\alpha^\star(X)\},\qquad \vartheta\in\Theta,
\]
and let $\Pi_X^\star$ be the corresponding possibility measure.

Then $\Pi_X^\star$ is valid and possibilistically unbiased. Furthermore, for the concentration penalty $\Gamma_x(\vartheta)$ in \eqref{eq:concentration-penalty} with $\phi(\ell)=\ell$ (integrated expected length), the procedure $\Pi^\star=\{\Pi_X^\star\}$ minimizes Choquet risk pointwise in $\theta$ over the class of valid, possibilistically unbiased procedures whose $\alpha$-cuts are unbiased $(1-\alpha)$ confidence intervals with finite expected length. That is, $\Pi^\star$ is concentration-optimal among unbiased valid contours.
\end{corollary}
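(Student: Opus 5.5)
The plan is to split the statement into its two claims and reduce each to results already proved, using Proposition~\ref{prop:unbiased-equiv} for validity and unbiasedness and Corollary~\ref{cor:concentration-optimal} for optimality.

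\emph{Validity and unbiasedness.} I will apply the forward direction of Proposition~\ref{prop:unbiased-equiv} to the family $\{C^\star_\alpha(X)\}$ and check its hypotheses one by one.
\begin{enumerate}
\item Nestedness holds by assumption.
\item Nominal coverage \eqref{eq:nominal-coverage} holds because $C_\alpha^\star$ inverts level-$\alpha$ tests: $P_\theta\{\theta\in C^\star_\alpha(X)\}=P_\theta\{T\in\mathcal A_{\theta,\alpha}\}\ge 1-\alpha$.
\item Unbiasedness \eqref{eq:unbiased-C} is the unbiasedness of the test of $H(\theta)$, that is, $P_\vartheta\{T\notin\mathcal A_{\theta,\alpha}\}\ge\alpha$ for $\vartheta\neq\theta$, again rewritten through the inversion identity.
\item Boundary regularity holds because $C^\star_\alpha(X)$ is a closed interval, since $a_\alpha,b_\alpha$ are strictly increasing and the endpoints $b_\alpha^{-1}\{T\}$, $a_\alpha^{-1}\{T\}$ are included.
\end{enumerate}
Parts (a)--(c) of Proposition~\ref{prop:unbiased-equiv} then give three conclusions: the $\alpha$-cuts of $\pi^\star_X$ equal $C^\star_\alpha(X)$, the contour is strongly valid, and it is possibilistically unbiased.

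\emph{Optimality.} Take any competitor $\Pi$ in the class. By the converse direction of Proposition~\ref{prop:unbiased-equiv}, or directly by the definition of the class, its cuts $A_\alpha(X)$ are unbiased $(1-\alpha)$ confidence intervals with finite expected length. For each fixed $\alpha$, $C^\star_\alpha$ is most accurate unbiased. That is, for all $\theta$ and all $\theta'\neq\theta$,
\[
P_\theta\{\theta'\in C^\star_\alpha(X)\}\le P_\theta\{\theta'\in A_\alpha(X)\}.
\]
Integrating this over $\theta'$ and applying the Ghosh--Pratt identity $E_\theta|C(X)|=\int_{\theta'\neq\theta}P_\theta\{\theta'\in C(X)\}\,d\theta'$, which uses Tonelli and the fact that a single point has Lebesgue measure zero, gives $E_\theta|C^\star_\alpha(X)|\le E_\theta|A_\alpha(X)|$ for every $\theta$ and $\alpha\in(0,1)$. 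The endpoints $\alpha\in\{0,1\}$ form a null set and are handled by the a.e.\ version of Lemma~\ref{lem:alpha-cut-minimal}. With $\phi(\ell)=\ell$, this is exactly the hypothesis of Corollary~\ref{cor:concentration-optimal}, using the $\alpha$-risk $r_\alpha(\theta,\Pi)=E_\theta|A_\alpha(X)|$ computed before the corollary. That corollary, together with the decomposition \eqref{eq:risk-decomposition}, yields $R(\theta,\Pi^\star)\le R(\theta,\Pi)$ for every $\theta$.

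\emph{Main obstacle.} I expect the hardest step to be the levelwise accuracy comparison. The classical most-accurate-unbiased property of $C^\star_\alpha$ (Lehmann--Romano, Lemma~5.5.1) is stated against unbiased confidence sets at a fixed level. I must ensure that each competitor cut $A_\alpha(X)$ lies in that comparison class: it must have coverage at least $1-\alpha$ and be unbiased in the false-coverage sense, which is precisely what the converse of Proposition~\ref{prop:unbiased-equiv} supplies. I would also need measurability of $(\theta',x)\mapsto 1\{\theta'\in A_\alpha(x)\}$ so that Ghosh--Pratt applies; this follows from the measurability of the contour. Finally, the ``strictly unbiased'' assumption lets the UMPU test's size be exactly $\alpha$, so that no randomization issues arise from the continuity of $T$'s MLR family, and the finite expected lengths keep all integrals finite.
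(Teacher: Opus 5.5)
Your proposal is correct and follows the paper's own route: Proposition~\ref{prop:unbiased-equiv} gives validity and possibilistic unbiasedness, the levelwise most-accurate-unbiased property plus the Ghosh--Pratt identity gives minimal expected cut length, and Corollary~\ref{cor:concentration-optimal} lifts this to Choquet risk; you also spell out the Tonelli, endpoint and measurability bookkeeping that the paper leaves implicit. One small caution: the converse of Proposition~\ref{prop:unbiased-equiv} only yields the ordering $P_\theta\{\theta\in A_\alpha(X)\}\ge P_\vartheta\{\theta\in A_\alpha(X)\}$, and this implies the classical false-coverage bound $P_\vartheta\{\theta\in A_\alpha(X)\}\le 1-\alpha$ (needed for the Lehmann--Romano comparison) only when coverage at the truth is exactly $1-\alpha$, so it is the class definition (your ``or directly'' clause), not that converse, that places competitor cuts in the comparison class.
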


\begin{proof}
Validity and possibilistic unbiasedness follow from Proposition~\ref{prop:unbiased-equiv} applied to the nested family $\{C_\alpha^\star(X)\}$. 
For the stated concentration penalty, Corollary~\ref{cor:concentration-optimal} reduces Choquet risk to an integral of expected lengths of $\alpha$-cuts. 
Since $C_\alpha^\star(X)$ has minimal expected length among unbiased $(1-\alpha)$ intervals at each $\alpha$ (by the Ghosh-Pratt identity), integrating over $\alpha$ yields pointwise minimal Choquet risk within the stated class.
\end{proof}

\begin{example}[Exponential model: unbiased concentration-optimal contours]\label{ex:expunb}
Let $X_1,\ldots,X_n$ be i.i.d.\ exponential with rate $\lambda>0$,
\[
f_\lambda(x)=\lambda e^{-\lambda x}\,1\{x\ge 0\},\qquad \lambda\in\Theta=(0,\infty),
\]
and write $S=\sum_{i=1}^n X_i$ and $\bar X=S/n$. The likelihood is
\[
L_x(\lambda)=\lambda^n e^{-\lambda S},\qquad \hat\lambda=\frac{n}{S}=\frac{1}{\bar X}.
\]
Consider the raw relative likelihood ratio contour
\begin{equation}\label{eq:exp-q}
q_x(\lambda)
=\frac{L_x(\lambda)}{L_x(\hat\lambda)}
=\Big(\frac{\lambda}{\hat\lambda}\Big)^n\exp\{-(\lambda-\hat\lambda)S\}
=(\lambda\bar X)^n\exp\{-n(\lambda\bar X-1)\}.
\end{equation}
Validify $q$ via the probability integral transform (PIT) and define the calibrated LR contour
\begin{equation}\label{eq:exp-pi-unb}
\pi_x^{\mathrm{unb}}(\lambda)
:=P_\lambda\{q_X(\lambda)\le q_x(\lambda)\},\qquad \lambda\in(0,\infty).
\end{equation}
For each fixed $\lambda_0$, continuity implies $\pi_X^{\mathrm{unb}}(\lambda_0)\sim \mathrm{Unif}(0,1)$ under $P_{\lambda_0}$, so $\pi^{\mathrm{unb}}$ is \emph{exactly valid}. Moreover, $\pi_x^{\mathrm{unb}}(\lambda_0)$ is the exact finite-sample two-sided LR $p$-value for testing $H_0:\lambda=\lambda_0$.

It remains to identify this exact LR p-value as the UMPU p-value. The statistic
\(S\) is sufficient and the family
\[
  f_\lambda(s)
  =
  \frac{\lambda^n}{\Gamma(n)}s^{n-1}e^{-\lambda s},
  \qquad s>0,
\]
has monotone likelihood ratio (increasing MLR in $-S$). Fix \(\lambda_0\) and
write \(Y=\lambda_0S\). Under \(H_0\), \(Y\sim\Gamma(n,1)\), and
\[
  q_X(\lambda_0)
  =
  \left(\frac{Y}{n}\right)^n e^{-(Y-n)}
  \propto
  Y^n e^{-Y}.
\]
Since \(y\mapsto y^n e^{-y}\) is unimodal with maximizer \(n\), the LR
rejection region has the form
\[
  \{Y\le y_{1,\alpha}\}\cup\{Y\ge y_{2,\alpha}\},
  \qquad y_{1,\alpha}<n<y_{2,\alpha},
\]
where the cutoffs are chosen so that the rejection probability is \(\alpha\)
under \(Y\sim\Gamma(n,1)\) and the two boundary likelihoods are equal:
\[
  y_{1,\alpha}^n e^{-y_{1,\alpha}}
  =
  y_{2,\alpha}^n e^{-y_{2,\alpha}}.
\]
In this one-parameter exponential-family setting, the UMP unbiased level-\(\alpha\)
test of \(H_0:\lambda=\lambda_0\) is characterized by the level condition together
with the Lehmann--Romano moment condition
\cite[Sec.~4.2, eqs.~(4.5)--(4.6)]{lehmann2005testing}. In the present gamma
case, the equal-likelihood boundary condition above is equivalent, by a single
integration-by-parts identity, to that moment condition. Therefore the exact
two-sided LR test is UMPU, and \(\pi_x^{\rm unb}\) is the corresponding UMPU
p-value function.

The \(\alpha\)-cuts of this contour are
\[
  C_\alpha(X)
  :=
  \{\lambda:\pi_X^{\rm unb}(\lambda)\ge \alpha\}
  =
  \left[
    \frac{y_{1,\alpha}}{S},
    \frac{y_{2,\alpha}}{S}
  \right].
\]
These are the most accurate unbiased
\((1-\alpha)\) confidence intervals for \(\lambda\). They are nested in \(\alpha\), and
for \(n>1\), these intervals have finite expected length, since
\(|C_\alpha(X)|=(y_{2,\alpha}-y_{1,\alpha})/S\) and
\(E_\lambda(S^{-1})=\lambda/(n-1)\).
Therefore, by Corollary \ref{cor:optimal-unbiased}, the induced possibility measure is
concentration-optimal, for the integrated expected-length penalty, within the class
of valid possibilistically unbiased procedures.
\end{example}

\subsection{Equivariance and Minimax Choquet Risk}\label{sec:equi}
The previous section showed how classical unbiasedness theory can be lifted into the possibilistic setting. That route is powerful, but it is also very special. Outside highly structured settings, one should not expect a procedure that is uniformly best at every parameter value. This is already familiar from classical decision theory: pointwise optimality is rare, and the usual replacement is a minimax criterion instead. The minimax criterion, however, is most useful when combined with structural symmetries. In invariant statistical problems such as translation models, scale models, translation-scale models, and other group models, equivariance properties are used to restrict attention to procedures that respect the relevant symmetries of the statistical problem. Hunt-Stein type arguments then show that this restriction entails no loss of generality for minimax comparisons.

The aim of this section is to develop the corresponding statment for valid possibility contours. The key observation is that the Choquet-risk decomposition already puts us in the familiar classical territory: a valid contour is equivalently a nested family of calibrated confidence sets, and Choquet risk is an integral over the risks of those $\alpha$-cuts. Thus equivariance can be imposed either on the contour itself or on every $\alpha$-cut. Once that equivalence is established, classical invariant confidence-set arguments can be applied level by level and then integrated over $\alpha$.

\subsubsection{Equivariant contours and equivariant \texorpdfstring{$\alpha$}{alpha}-cuts}
Let $G$ be a group acting measurably on the sample space $\mathcal X$ and on the parameter space $\Theta$. We write these actions as \[x\mapsto gx, \qquad \theta \mapsto g\theta.\] The model $\{P_\theta:\theta \in \Theta \}$ is $G$-invariant if \[X\sim P_\theta \implies gX\sim P_{g\theta}, \qquad g\in G, \theta \in \Theta.\] In such a problem, transforming both the data and the parameter should not change the inferential content of the contour. It should merely relabel it. This leads to the following definition.

\begin{definition}[Equivariant possibility]\leavevmode\par
A data-dependent possibility contour $x\mapsto \pi_x$ is $G$-equivariant if \[\pi_{gx}(g\theta)=\pi_x(\theta), \qquad g\in G, x\in \mathcal X, \theta \in \Theta.\] Equivalently, the induced possibility measure satisfies \[\Pi_{gx}(gB)=\Pi_x(B), \qquad B\subseteq\Theta,\] where $gB=\{g\theta:\theta \in B\}.$ 
\end{definition}

This definition is the natural possibilistic analogue of equivariance for confidence sets. Recall that the $\alpha$-cut of the contour is \[A_\alpha(x)=\{\theta:\pi_x(\theta)\geq \alpha \}.\] The next lemma records the basic equivalence: imposing equivariance on the contour is the same as imposing equivariance on every $\alpha$-cut. This is a bookkeeping result that lets us translate between possibility-contour language and the classical theory of equivariant confidence regions.

\begin{lemma}\label{lem: equiv**2}
    Let $\pi_x$ be a valid possibility contour with $\alpha$-cuts \[A_\alpha(x)=\left\{\theta: \pi_x(\theta) \geq \alpha \right\},\] and suppose conversely that the contour is recovered from its $\alpha$-cuts by
    \begin{equation*}
        \pi_x(\theta)=\sup\left\{\alpha\in[0,1]:\theta \in A_\alpha(x)\right\}.
    \end{equation*}
    Then the following are equivalent:
    \begin{enumerate}
  \item $\pi_x$ is valid and $G$-equivariant;
\item  $A_\alpha$ is a $G$-equivariant
$(1-\alpha)$-confidence set, i.e. \[A_\alpha(gx)=gA_\alpha(x), \qquad P_\theta\{\theta \in A_\alpha(X)\}\geq 1-\alpha, \quad \text{for all $\alpha \in (0,1)$}.\]
  
\end{enumerate}
\end{lemma}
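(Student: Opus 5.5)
The plan is to split the equivalence into a coverage part and an equivariance part, and handle each separately. The coverage part is essentially already done. By \eqref{eq:validity-point} and the derivation of \eqref{eq:coverage}, strong validity gives $P_\theta\{\theta\in A_\alpha(X)\}\ge 1-\alpha$ for every $\alpha$. Conversely, by part (b) of Proposition~\ref{prop:unbiased-equiv}, a nested family of $(1-\alpha)$-confidence sets whose stacked contour recovers them is strongly valid. Since the lemma already assumes the contour is recovered from its cuts via \eqref{eq:contcutrep}, the cuts are closed in the sense required there. So validity of $\pi_x$ is equivalent to the coverage statement in (2). What remains is to show that $G$-equivariance of $\pi$ is equivalent to $A_\alpha(gx)=gA_\alpha(x)$ for all $\alpha$.

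For (1)$\Rightarrow$(2), I will fix $g$, $x$ and $\alpha$ and chase elements. Every point of $\Theta$ can be written as $g\theta$, because $\theta\mapsto g\theta$ is a bijection with inverse $g^{-1}$; this is where the group structure matters. Then
\[
g\theta\in A_\alpha(gx)\iff \pi_{gx}(g\theta)\ge\alpha\iff \pi_x(\theta)\ge\alpha\iff \theta\in A_\alpha(x)\iff g\theta\in gA_\alpha(x),
\]
where the second equivalence is the equivariance of $\pi$. This gives $A_\alpha(gx)=gA_\alpha(x)$.

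For (2)$\Rightarrow$(1), I will use the recovery formula \eqref{eq:contcutrep}. For any $g$, $x$ and $\theta$,
\[
\pi_{gx}(g\theta)=\sup\{\alpha: g\theta\in A_\alpha(gx)\}=\sup\{\alpha: g\theta\in gA_\alpha(x)\}=\sup\{\alpha:\theta\in A_\alpha(x)\}=\pi_x(\theta),
\]
where the third equality uses injectivity of the action of $g$. The equivalent statement $\Pi_{gx}(gB)=\Pi_x(B)$ then follows by taking the supremum over $B$, since $\sup_{\vartheta\in gB}\pi_{gx}(\vartheta)=\sup_{\theta\in B}\pi_{gx}(g\theta)$.

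The main obstacle is the endpoint convention. Statement (2) is stated for $\alpha\in(0,1)$, but the sets $A_0=\Theta$ and $A_1$ (the core) also enter the supremum. The core is handled because, under the recovery formula, $\theta\in A_1(x)$ exactly when $\pi_x(\theta)=1$, which is determined by the open-level cuts via the supremum. I also need the left-continuity of $\alpha\mapsto 1\{\theta\in A_\alpha\}$, which holds because the sets are superlevel sets with a non-strict inequality. I expect this to be a short remark rather than a real difficulty. A second minor point is the ``a.s.'' qualification inherited from Proposition~\ref{prop:unbiased-equiv}, which I will sidestep by working pointwise in $x$, since the recovery identity is assumed for every $x$.
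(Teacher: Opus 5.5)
Your proposal is correct and follows the paper's proof: validity is matched with coverage of the $\alpha$-cuts via Proposition~\ref{prop:unbiased-equiv}, and equivariance is handled by the same element chase for (1)$\Rightarrow$(2) and the same supremum-recovery computation for (2)$\Rightarrow$(1). Your remarks on the endpoint levels $\alpha\in\{0,1\}$ and on deriving $\Pi_{gx}(gB)=\Pi_x(B)$ are harmless refinements that the paper leaves implicit.
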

\begin{proof}
The equivalence of validity and nominal coverage of the $\alpha$-cuts is shown in Proposition \ref{prop:unbiased-equiv}. What remains is the bookkeeping for the equivariance conditions.

    (1$\implies$2): Take any $\alpha\in(0,1)$, then for any $\theta$
    \begin{align*}
        \theta \in A_\alpha(g  x) &\Leftrightarrow \pi_{gx}(\theta)\geq \alpha \Leftrightarrow \pi_x(g^{-1} \theta) \geq \alpha \\
        &\Leftrightarrow g^{-1}\theta \in A_\alpha(x) \Leftrightarrow\theta \in g  A_\alpha(x),
    \end{align*}
    where the last equivalence uses the bijectivity of $g$. Hence \[A_\alpha(gx)=gA_\alpha(x).\] \\
    (2$\implies$1): Conversely, suppose the $\alpha$-cuts are equivariant. Then, for any $g,x,\theta$,
    \begin{equation*}
        \pi_{gx}(g\theta)=\sup\left\{\alpha:g \theta \in A_\alpha(g  x) \right\}= \sup\left\{\alpha: \theta \in A_\alpha(x)\right\}=\pi_x(\theta),
    \end{equation*}
    where the second equality again follows from the bijectivity of $g$. Hence the contour is $G$-equivariant.

\end{proof}

The importance of Lemma \ref{lem: equiv**2} is that it makes equivariance compatible with the $\alpha$-cut reduction of Choquet risk. If a loss is invariant under the group action, then the problem of minimizing Choquet risk over valid equivariant contours can be studied level by level as a problem about equivariant confidence sets.

\subsubsection{A Hunt-Stein reduction for Choquet risk}
We now turn from definition to minimax comparisons. Let $\Pi_v$ denote a class of valid possiblity contours, and let $\Pi_v^e$ denote the subclass of valid $G$-equivariant contours. Assume the loss-penalty is invariant in the sense that \[\Gamma_{g\theta}(g\vartheta)=\Gamma_\theta(\vartheta), \qquad g\in G.\] The most important example for this section are \emph{radial penalties}, of form \[\Gamma_\theta(\vartheta)=h(\rho(\vartheta, \theta)),\] where $\rho$ is a $G$-invariant distance-like function, \[\rho(g\vartheta, g\theta)=\rho(\vartheta, \theta),\] and $h$ nondecreasing.

For a contour $\pi$, write \[r_\alpha(\theta,\pi)=E_\theta \bigg[\sup_{\vartheta \in A_\alpha(X)}\Gamma_\theta(\vartheta)\bigg],\] and \[R(\theta, \pi)=\int_0^1 r_\alpha(\theta, \pi)d\alpha.\]

The next result is the possibilistic analogue of the usual Hunt-Stein reduction. It says that, for minimax purposes, non-equivariant valid contours can be symmetrized (made equivariant in distribution) without increasing worst-case Choquet risk. As a consequence, for minimax comparisions of contours, we can restrict attention to only equivariant contours.

\begin{theorem}\label{thm:H-S}
    Assume that $G$ is a compact or amenable group and acts measurably on $(\mathcal X, \Theta)$, and the model $\{P_\theta:\theta \in \Theta\}$ is $G$-invariant. Suppose the loss-penalty is radial and $G$-invariant.
   Then, for every valid contour $\pi\in\Pi_v$, there exists a randomized $G$-equivariant (in distribution) valid contour $\pi^{(\mu)}$ such that \[\sup_{\theta \in \Theta} R(\theta, \pi^{(\mu)}) \leq \sup_{\theta \in \Theta} R(\theta, \pi).\] Consequently, the minimax value over all valid contours equals the minimax value over randomized equivariant valid contours:
        \begin{equation*}
           \inf_{\pi\in \Pi_v} \sup_\theta R(\theta, \pi)= \inf_{\pi \in \Pi^e_v} \sup_\theta R(\theta, \pi).
        \end{equation*}
  For noncompact amenable groups, the same conclusion holds by replacing Haar averaging with a Følner/Hunt-Stein limiting argument
\end{theorem}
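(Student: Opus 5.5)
The plan is the classical Hunt--Stein symmetrization, carried out at the level of contours, with the $\alpha$-cut decomposition \eqref{eq:risk-decomposition} doing the bookkeeping. Assume first that $G$ is compact, with normalized Haar probability $\mu$. Given a valid contour $\pi\in\Pi_v$, I define the randomized contour
\[
\pi^{(\mu)}_x(\theta):=\pi_{\tilde g^{-1}x}(\tilde g^{-1}\theta),\qquad \tilde g\sim\mu \text{ independent of } X .
\]
Its $\alpha$-cuts are $A^{(\mu)}_\alpha(x)=\tilde g A_\alpha(\tilde g^{-1}x)$, by the same bijectivity bookkeeping as in Lemma~\ref{lem: equiv**2}. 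Left-invariance of $\mu$ then gives $\pi^{(\mu)}_{hx}(h\theta)\overset{d}{=}\pi^{(\mu)}_x(\theta)$ for every $h\in G$, because $h^{-1}\tilde g\sim\mu$. So $\pi^{(\mu)}$ is equivariant in distribution.

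\textbf{Step 1 (validity).} Condition on $\tilde g=g$. Under $P_\theta$, invariance of the model gives $g^{-1}X\sim P_{g^{-1}\theta}$. Hence
\[
P_\theta\{\pi^{(\mu)}_X(\theta)<\alpha\mid \tilde g=g\}=P_{g^{-1}\theta}\{\pi_{X}(g^{-1}\theta)<\alpha\}\le\alpha
\]
by strong validity \eqref{eq:validity-point} of $\pi$. Integrating over $\mu$ shows that $\pi^{(\mu)}$ is valid.

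\textbf{Step 2 (risk identity).} Again condition on $\tilde g=g$. The supremum of $\Gamma_\theta$ over $gA_\alpha(g^{-1}X)$ equals the supremum of $\Gamma_{g^{-1}\theta}$ over $A_\alpha(g^{-1}X)$, by invariance $\Gamma_{g\theta}(g\vartheta)=\Gamma_\theta(\vartheta)$. Combining this with $g^{-1}X\sim P_{g^{-1}\theta}$ gives
\[
r_\alpha(\theta,\pi^{(\mu)})=\int_G r_\alpha(g^{-1}\theta,\pi)\,\mu(dg).
\]
Tonelli (all integrands are non-negative) and \eqref{eq:risk-decomposition} then give $R(\theta,\pi^{(\mu)})=\int_G R(g^{-1}\theta,\pi)\,\mu(dg)\le\sup_{\theta'}R(\theta',\pi)$. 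Taking the supremum over $\theta$ yields the first claim. The minimax equality follows at once: $\Pi_v^e\subseteq\Pi_v$ gives the inequality "$\le$" trivially, and the construction gives "$\ge$". Measurability of $(g,x)\mapsto\pi_{g^{-1}x}(g^{-1}\theta)$ follows from the measurability of the action, and the radial form of $\Gamma$ is used only to guarantee that the supremum over cuts is measurable and behaves well under $g$.

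\textbf{Step 3 (noncompact amenable $G$; main obstacle).} Here Haar probability averaging is unavailable, so I would take a Følner sequence $K_n$ and normalized averages $\mu_n$ = Haar measure restricted to $K_n$. Each $\pi^{(\mu_n)}$ is valid by Step~1, and Step~2 still bounds $R(\theta,\pi^{(\mu_n)})\le\sup R(\cdot,\pi)$. However, $\pi^{(\mu_n)}$ is only approximately equivariant, with a defect controlled by $\mu_n(hK_n\triangle K_n)\to0$.

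The delicate part is passing to a limit that is exactly equivariant (in distribution) and still valid. The plan is to work with the cut-membership probabilities $\alpha\mapsto P\{\theta\in A^{(\mu_n)}_\alpha(x)\}$, which are bounded functions, and extract a weak-$*$ or Banach-limit cluster point along an invariant mean. Validity is preserved because it is a closed linear constraint on these probabilities. Risk is preserved by lower semicontinuity of the Choquet risk (Fatou in $\alpha$ and $x$) together with the uniform bound from Step~2. Exact equivariance comes from the Følner property.

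I expect this limiting step to be the main technical obstacle. Specifically, the difficulty is ensuring that the limit object is again a nested family of cuts, i.e. a genuine (randomized) contour; as in the classical Hunt--Stein theorem, this may require compactness assumptions on the decision space. I would treat it by mirroring the standard argument in \citet{lehmann2005testing}, applied level-wise to the $\alpha$-cuts and then integrated over $\alpha$.
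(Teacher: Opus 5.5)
Your compact-group argument is correct and is the paper's proof (Lemma~\ref{lem:A1}). It uses the same randomized symmetrization, the same validity check (condition on the group draw and invoke model invariance), and the same orbit-average identity for $r_\alpha$, integrated over $\alpha$. Your $\tilde g^{-1}$/left-invariance convention is the one Remark~\ref{rmk:amenable} recommends. Your strict-inequality validity statement is the one that actually follows from \eqref{eq:validity-point}; Lemma~\ref{lem:A1}(i) writes ``$\le\alpha$''.

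The gap is in Step~3. The paper only appeals to ``the standard Hunt--Stein limiting argument'' there, so you are not missing something it supplies. But the mechanism you propose fails as stated: the $\alpha$-cut risk $E_\theta[\sup_{\vartheta\in A_\alpha(X)}\Gamma_\theta(\vartheta)]$ of a randomized cut is not a function of its membership probabilities $\vartheta\mapsto P\{\vartheta\in A_\alpha(x)\}$. Take two randomized cuts: one equal to $[0,1]$ or $[1,2]$ with probability $1/2$ each, the other equal to $[0,2]$ or $\{1\}$ with probability $1/2$ each. They have the same membership function. Yet for $\theta=1$ and $\Gamma_\theta(\vartheta)=|\vartheta-\theta|$ their expected losses are $1$ and $1/2$. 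Coverage is linear in the membership probabilities, which is why the classical Hunt--Stein theorem for tests goes through. A weak-$*$ limit of membership probabilities, however, neither determines nor lower-bounds the sup-type radial risk. In addition, separate level-wise limits along different subnets need not assemble into one nested family.

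What is needed is a limit of the laws of the random cuts themselves, taken jointly over $\alpha$. One option is laws of nested families of closed sets, or of the random upper semicontinuous contour, under the Fell topology, which is compact metrizable for locally compact second-countable $\Theta$. In that topology:
\begin{itemize}
\item $\{F:\theta\in F\}$ is closed, so coverage $\ge 1-\alpha$ survives the limit by the portmanteau theorem.
\item $\{F:F\cap\{\Gamma_\theta>t\}\neq\varnothing\}$ is open for continuous $\Gamma_\theta$, so $r_\alpha(\theta)=\int_0^\infty P_\theta\{A_\alpha(X)\cap\{\Gamma_\theta>t\}\neq\varnothing\}\,dt$ is lower semicontinuous.
\item Inclusion is a closed relation, so nestedness persists.
\item The F{\o}lner property gives exact invariance of the limit.
\end{itemize}
Le Cam-type domination conditions are what let you realize the limit as a genuine Markov kernel in $x$. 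This is the extra compactness you suspected, but it has to be applied to laws of random sets, not to membership probabilities.
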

\begin{proof}

    For any valid $\pi\in \Pi_v$, average its contour over $G$ to obtain the randomized symmetrization $\pi^{(\mu)}$. In the compact case, Lemma \ref{lem:A1} shows: (i) $\pi^{(\mu)}$ is valid; (ii) it is $G$-equivariant in distribution; (iii) and, crucially,
    \begin{equation}\label{eq:Haaravg}
        r_\alpha\bigl(\theta, \pi^{(\mu)}\bigr) = \int_G r_\alpha(g\cdot \theta, \pi)\mu(dg) \quad \implies \quad \sup_\theta r_\alpha\bigl(\theta, \pi^{(\mu)}\bigr) \leq \sup_\theta r_\alpha(\theta, \pi),
    \end{equation}
    hence $\sup_\theta R\bigl(\theta, \pi^{(\mu)}\bigr) \leq \sup_\theta R(\theta, \pi)$. Therefore,
    \begin{equation}\label{eq: minmaxrisk}
        \inf_{\pi \in \Pi_v} \sup_\theta R(\theta, \pi) =  \inf_{\pi \in \Pi_v^e} \sup_\theta R(\theta, \pi).
    \end{equation}


In the non-compact amenable group $G$ case, we replace the Haar average in \eqref{eq:Haaravg} by a Følner sequence and invoke the standard Hunt-Stein limiting argument (see remark \ref{rmk:amenable}) to obtain an exactly invariant randomized procedure with the same minimax bound.

\end{proof}

The theorem should be read as a reduction principle. It tells us that minimax optimality can be sought among equivariant procedures, but it does not by itself identify the minimax contour. To obtain an actual optimizer, we still need to solve the levelwise equivariant confidence-set problem. In general this can be difficult. The cleanest solvable case is the Gaussian location experiment.

\subsubsection{Gaussian location: a radial minimax contour}\label{sec:Gaussball}
Consider the Gaussian location experiment \[Z \sim N_d(u, I_d), \qquad u \in \mathbb R^d,\] with the translation group $G=(\mathbb R^d, +)$ acting by \[z \mapsto z+t, \qquad u\mapsto u+t.\] Let the penalty be radial, \[\Gamma_u(v)=h(\|v-u\|),\] where $h$ is nondecreasing and satisfies the mild integrability assumptions needed below.

The candidate optimal contour is the Gaussian possibility contour \[\pi_z^\star(v)=1-F_d(\|v-z\|^2),\] where $F_d$ is the distribution function of $\chi^2_d$. Its $\alpha$-cuts are the usual Gaussian confidence balls \[A^\star_\alpha(z)=\{v:\|v-z\|^2\leq F_d^{-1}(1-\alpha)\}.\] These are preciisely the translation-equivariant confidence regions one would expect from the ordinary $z$-test. The point of the next result is that they are also optimal for Choquet risk with respect to the radial penalty.

\begin{corollary}
\label{cor:gaussian-possibility-minimax}
Let $\Pi_v^{\mathrm{tr}}$ be the class of measurable valid
translation-equivariant possibility contours in the Gaussian location experiment \[Z\sim N_d(u, I_d).\] Let \[\Gamma_u(v)=h(\|v-u\|)\] with $h$ continuous, nondecreasing, $h(r)\rightarrow\infty$, and $E[h(r+\|W\|)]<\infty$ for every $r<\infty$, where $W\sim N_d(0, I_d)$. Define \[\pi_z^\star(v)=1-F_d(\|v-z\|^2).\] Then $\pi^\star$ is valid and translation-equivariant. Moreover, \[R(u, \pi^\star)\leq R(u, \pi)\qquad \text{for every $u\in \mathbb R^d$ and every $\pi\in \Pi_v^{tr}$.}\] Consequently, $\pi^\star$ is Choquet-risk minimal within the class of valid translation-equivariant contours. By the Hunt-Stein reduction, it is minimax over the full class of valid contours: \[\sup_u R(u, \pi^\star)=\inf_{\pi\in\Pi_v}\sup_u R(u, \pi).\]
\end{corollary}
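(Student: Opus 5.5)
The plan is to first dispatch the easy claims (validity and equivariance), then reduce the levelwise risk comparison to a statement about fixed sets under the standard Gaussian measure, integrate over $\alpha$ via Lemma~\ref{lem:alpha-cut-minimal}, and finally obtain minimax by Theorem~\ref{thm:H-S}.

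\emph{Validity and equivariance.} Under $P_u$ we have $\|Z-u\|^2\sim\chi^2_d$. Hence $\pi^\star_Z(u)=1-F_d(\|Z-u\|^2)\sim\mathrm{Unif}(0,1)$, which is exact validity. Translation equivariance $\pi^\star_{z+t}(v+t)=\pi^\star_z(v)$ is immediate.

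\emph{Reduction of the levelwise risk.} For a general $\pi\in\Pi_v^{\mathrm{tr}}$, Lemma~\ref{lem: equiv**2} gives translation-equivariant cuts, so $A_\alpha(z)=z+B_\alpha$ with the fixed set $B_\alpha:=A_\alpha(0)$. Write $Z=u+W$ with $W\sim N_d(0,I_d)$ and let $\gamma$ be the standard Gaussian measure.
\begin{itemize}
\item Coverage becomes $\gamma(-B_\alpha)=\gamma(B_\alpha)\ge 1-\alpha$, using the symmetry of $\gamma$.
\item The $\alpha$-cut risk becomes
\[
r_\alpha(u,\pi)=E\Bigl[h\bigl(S_{B_\alpha}(W)\bigr)\Bigr],\qquad S_B(w):=\sup_{b\in B}\|w+b\|,
\]
which does not depend on $u$.
\item For $\pi^\star$ we have $B^\star_\alpha=\bar B(0,c_\alpha)$ with $c_\alpha^2=F_d^{-1}(1-\alpha)$, so $S_{B^\star_\alpha}(W)=\|W\|+c_\alpha$.
\end{itemize}
Since $h$ is nondecreasing, it suffices to prove the stochastic dominance
\[
P\{S_B(W)\le t\}\le P\{\|W\|+c_\alpha\le t\}\quad\text{for all }t,\ \text{whenever }\gamma(B)\ge 1-\alpha .
\]
Integrating $r_\alpha(u,\pi^\star)\le r_\alpha(u,\pi)$ over $\alpha$ via Lemma~\ref{lem:alpha-cut-minimal} then gives $R(u,\pi^\star)\le R(u,\pi)$ for every $u$. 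The integrability assumption $E[h(r+\|W\|)]<\infty$ makes $R(u,\pi^\star)=\int_0^1E\,h(\|W\|+c_\alpha)\,d\alpha$ finite, and the Tonelli step in \eqref{eq:risk-decomposition} is legitimate.

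\emph{Main obstacle: the dominance inequality.} I would rewrite $\{S_B(W)\le t\}=\{W+B\subseteq \bar B(0,t)\}=\{W\in K_t\}$, where
\[
K_t:=\bar B(0,t)\ominus B=\bigcap_{b\in B}\bigl(\bar B(0,t)-b\bigr)
\]
is the Minkowski erosion. This set is convex and satisfies $K_t+B\subseteq \bar B(0,t)$. The target is $\gamma(K_t)\le\gamma(\bar B(0,t-c_\alpha))$.

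A crude bound is available first. Since $\gamma(B)\ge 1-\alpha=\gamma(\bar B(0,c_\alpha))$, the outer radius satisfies $\sup_{b\in B}\|b\|\ge c_\alpha$. However, a single far point $b_0$ only yields $S_B(w)\ge\|w+b_0\|$, which is not pointwise comparable with $\|w\|+c_\alpha$. The full measure constraint on $B$ must therefore be used.

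My first attempt is a Gaussian Brunn--Minkowski argument. Apply Ehrhard's inequality to the convex set $K_t$ and a convexified version of $B$; replacing $B$ by $\mathrm{conv}(B)$ only increases $\gamma$ and leaves $S_B$ unchanged. Combined with $K_t+B\subseteq\bar B(0,t)$ and a rescaling $K_t+B=(1/\lambda)\cdot\bigl(\lambda K_t+(1-\lambda)B'\bigr)$ with a suitable scaling $B'$ of $B$, this should give $\Phi^{-1}\gamma(K_t)+\Phi^{-1}\gamma(B)\lesssim$ the corresponding ball quantity. Equality should hold exactly for centered balls, which would yield the claim.

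If that fails, the fallback is Steiner/Gaussian symmetrization, in which $K_t$ and $B$ are replaced by centered balls of the same Gaussian measure while preserving the inclusion $K_t+B\subseteq\bar B(0,t)$. This is the step where the convex-geometric content, and any real difficulty, lies.

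Once levelwise optimality is in hand, Theorem~\ref{thm:H-S} applies, since the translation group is amenable and the penalty is radial and invariant. It gives $\inf_{\Pi_v}\sup_u R=\inf_{\Pi_v^{\mathrm{tr}}}\sup_u R$. Because $R(u,\pi^\star)$ is constant in $u$ and minimal within $\Pi_v^{\mathrm{tr}}$, this yields the minimax statement.
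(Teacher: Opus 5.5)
\textbf{Verdict: there is a genuine gap.} Everything around the core is right and matches the paper. This covers validity and equivariance of $\pi^\star$, the reduction of each $\alpha$-cut to a fixed set $B$ with $\gamma(B)\ge 1-\alpha$ and $r_\alpha=E\,h(S_B(W))$ independent of $u$, the integration over $\alpha$, and the Hunt--Stein step. One side claim is false but harmless. The condition $E\,h(r+\|W\|)<\infty$ for each $r$ does not make $\int_0^1 E\,h(\|W\|+c_\alpha)\,d\alpha$ finite. For $h(r)=e^{r^2/4}$ one gets $E\,h(\|W\|+c)\asymp c^{d-1}e^{c^2/2}$ against $d\alpha\asymp c^{d-1}e^{-c^2/2}\,dc$, so the integral diverges. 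Nothing needs finiteness, since Tonelli and the comparison hold in $[0,\infty]$.

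\textbf{The gap: the dominance $\gamma(K_t)\le\gamma(\bar B(0,t-c_\alpha))$.} This is the entire content of the result, and the argument you sketch for it does not work. Ehrhard's inequality concerns convex combinations under a non-homogeneous measure. To use $K_t+B\subseteq\bar B(0,t)$ you must therefore rescale to $B'=sB$ with $s=\lambda/(1-\lambda)$, and the resulting bound involves $\gamma(sB)$ rather than $\gamma(B)$.

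The target holds with equality for balls at every $t$, so Ehrhard must be tight for balls. That pins $\lambda=(t-c_\alpha)/t$, i.e.\ $s=(t-c_\alpha)/c_\alpha$. You would then need $\gamma(sB)\ge\gamma(s\bar B(0,c_\alpha))$ whenever $\gamma(B)\ge\gamma(\bar B(0,c_\alpha))$. This fails for $s>1$: by the S-inequality, among symmetric convex sets of fixed Gaussian measure it is symmetric slabs, not balls, that minimise $\gamma(sB)$. Long thin boxes are bounded counterexamples.

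Without rescaling, the inclusion $\lambda K_t+(1-\lambda)B\subseteq K_t+B$ (valid when both sets contain $0$) gives nothing better than $\gamma(K_t)\le\gamma(\bar B(0,t))$.

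Your fallback is also misdescribed. Steiner symmetrization preserves Lebesgue measure, not Gaussian measure. Gaussian (Ehrhard) symmetrization produces half-spaces, which destroys the inclusion in $\bar B(0,t)$. The instruction ``replace $K_t$ and $B$ by centred balls of the same Gaussian measure while keeping $K_t+B\subseteq\bar B(0,t)$'' just restates what must be proved.

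\textbf{How the paper closes the step.} It avoids any rescaling (Proposition~\ref{prop:gaussian-shift-balls}) by using Minkowski symmetrization $\tfrac12(A+\sigma_\omega A)$. Ehrhard is then only ever applied to a convex set and its mirror image, which have equal Gaussian measure. This gives $\gamma(\tfrac12(A+\sigma_\omega A))\ge\gamma(A)$. It also gives the one-step stochastic dominance, via the inclusion $\tfrac12(E_A(t)+\sigma_\omega E_A(t))\subseteq E_{\frac12(A+\sigma_\omega A)}(t)$ for the sets $E_A(t)=\{\varphi_A\le t\}$ (your $K_t$ with $B=-A$). A universal sequence of symmetrizations then drives $A$ to a centred ball, and both bounds pass to the limit.

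\textbf{A cheaper fix from your own Steiner idea.} Done correctly, it closes the gap with less machinery.
\begin{enumerate}
\item Take $B$ closed (closing it leaves $S_B$ unchanged and can only raise $\gamma(B)$) and bounded (otherwise the risk is infinite). Take $K_t\neq\varnothing$, otherwise there is nothing to prove.
\item Since $K_t+B\subseteq\bar B(0,t)$, Brunn--Minkowski gives $|K_t|^{1/d}+|B|^{1/d}\le|\bar B(0,t)|^{1/d}$. So the radii $r_K,r_B$ of centred balls with the same Lebesgue measure satisfy $r_K+r_B\le t$.
\item The Gaussian density is radially decreasing, so a centred ball maximises $\gamma$ among sets of given Lebesgue measure.
\item Hence $\gamma(\bar B(0,r_B))\ge\gamma(B)\ge 1-\alpha$ forces $r_B\ge c_\alpha$, and $\gamma(K_t)\le\gamma(\bar B(0,r_K))\le\gamma(\bar B(0,t-c_\alpha))$.
\end{enumerate}
This avoids Ehrhard's inequality and the symmetrization limits entirely. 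It uses only that the noise law is spherically symmetric with a radially nonincreasing density.
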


\begin{proof}
First, $\pi^\star$ is a normalized possibility contour, since
\[
\sup_{v\in\mathbb{R}^d}\pi_z^\star(v)=\pi_z^\star(z)=1.
\]
It is translation-equivariant because
\[
\pi_{z+t}^\star(v+t)
=
1-F_d(\|v+t-(z+t)\|^2)
=
1-F_d(\|v-z\|^2)
=
\pi_z^\star(v).
\]
It is also valid: under $P_u$,
\[
\pi_Z^\star(u)=1-F_d(\|u-Z\|^2),
\]
and $\|u-Z\|^2\sim \chi_d^2$, so by continuity of $F_d$,
\[
\pi_Z^\star(u)\sim \mathrm{Unif}(0,1).
\]
Hence $\pi^\star\in \Pi_v^{\mathrm{tr}}$.

For $\alpha\in(0,1)$,
\[
A_\alpha^{\pi^\star}(z)
=
\{v:1-F_d(\|v-z\|^2)\ge \alpha\}
=
\{v:\|v-z\|^2\le F_d^{-1}(1-\alpha)\},
\]
which is exactly the ball rule $C_\alpha^\star(z)$ from
Proposition~\ref{prop:gaussian-shift-balls}.

Now let $\pi\in \Pi_v^{\mathrm{tr}}$. By Lemma \ref{lem: equiv**2}, for each $\alpha\in(0,1)$
the $\alpha$-cut $A_\alpha^\pi$ is a measurable translation-equivariant
$(1-\alpha)$-confidence set, i.e.
\[
A_\alpha^\pi \in C_\alpha^{\mathrm{tr}}.
\]
Therefore Proposition~\ref{prop:gaussian-shift-balls} gives
\[
r_\alpha\bigl(u,A_\alpha^{\pi^\star}\bigr)
\le
r_\alpha\bigl(u,A_\alpha^\pi\bigr)
\qquad
\forall \alpha\in(0,1),\ \forall u\in\mathbb{R}^d.
\]
Integrating in $\alpha$ and applying Lemma \ref{lem:alpha-cut-minimal} yields
\[
R(u,\pi^\star)\le R(u,\pi)
\qquad
\forall u\in\mathbb{R}^d,\ \forall \pi\in \Pi_v^{\mathrm{tr}}.
\]
Thus $\pi^\star$ is Choquet-risk minimal at every $u$ over
$\Pi_v^{\mathrm{tr}}$.

Finally, Since the acting group $G=(\mathbb R^d, +)$ is non-compact but amenable, the reduction in Theorem \ref{thm:H-S} can be applied, and we have
\[
\inf_{\pi\in\Pi_v}\sup_u R(u,\pi)
=
\inf_{\pi\in\Pi_v^{\mathrm{tr}}}\sup_u R(u,\pi).
\]
Since $\pi^\star\in \Pi_v^{\mathrm{tr}}$ attains the right-hand side, it also
attains the left-hand side. Hence $\pi^\star$ is minimax over the full class
$\Pi_v$ of valid possibility contours.

\end{proof}

\subsubsection{Conditional validification in transitive models}
The Gaussian location result suggests a broader recipe. The proof of Proposition \ref{prop:gaussian-shift-balls} relies heavily on the interplay between the functional form of the $\alpha$-cut risk functional \[r_\alpha(u, \pi)=E_\theta\bigg[\sup_{v \in A_\alpha(X)} h(\|v-u\|)\bigg]\] and convex geometry. The risk only cares about the sup, not the shape, which allows for a reduction to ball sets around an equivariant center, with radius determined by the radial penalty. We suspect similar reductions are available in more complex geometries than the earlier location model, though we are yet to find any. The following discussion generalizes the construction of the contour in Corollary \ref{cor:gaussian-possibility-minimax} — though unless a similar ball reduction is available, no statement of optimality applies.

In a transitive invariant model, suppose we have an equivariant center $\hat \theta(X)$ and an invariant distance-like function $\rho$. A natural family of equivariant confidence sets is then given by balls of the form \[\{\vartheta:\rho(\vartheta, \hat \theta(X))\leq r\}.\] The remaining question is how to choose the radius. If no ancillary or orbit information is available, the radius may be fixed by ordinary validification. But in many invariant models there is a maximal invariant $M(X)$ that carries ancillary information relevant to the radius. Conditioning on $M(X)$ can therefore give sharper valid regions.

This motivates the following strengthened notion of validity.
\begin{definition}[$M$-conditionally valid confidence set]\label{def:msim}
Let $M(X)$ be a maximal invariant. A confidence set $C_\alpha(X)$ is $M$-conditionally valid if \[P_\theta\{\theta \in C_\alpha(X)|M(X)\}\geq 1-\alpha\qquad \text{for every $\theta \in \Theta$, $P_\theta$-a.s.}\]  A possibility contour is $M$-conditionally valid if every $\alpha$-cut is $M$-conditionally valid.
\end{definition}

By the tower property, $M$-conditional validity implies ordinary validity. It is stronger than what is required for calibration, but it is often natural in invariant problems because the maximal invariant is ancillary to the parameter of interest, or because classical invariant procedures are already constructed conditionally on such a statistic.

We now describe the corresponding radius-validified contour. We reiterate: this is not, by itself, a general minimax theorem. Rather, it is a construction of the smallest conditionally valid equivariant ball contour, conditional on the chosen center, invariant distance, and maximal invariant. It becomes a minimax contour in settings where one can additionally prove that it is enough to consider such ball rules. 


\begin{proposition}\label{prop:validifiedradius}
    Assume the invariant setup of Lemma \ref{lem:A3}. In particular, let $G$ act transitively on $\Theta$, let the model be $G$-invariant, let $\hat \theta$ be a $G$-equivariant center, let $M$ be a maximal $G$-invariant, and let $\rho$ be a measurable $G$-invariant distance-like function satisfying $\rho(\theta, \theta)=0$. Fix a reference $\theta_0\in \Theta$, and define \[H(m,t):=P_{\theta_0}\{\rho(\theta_0,\hat \theta(X))\leq t|M(X)=m\}, \qquad m\in \mathcal M, t\geq 0.\] Assume $H$ satisfies the regularity conditions in Lemma \ref{lem:A3}, and define \[\tau^*_\alpha:=\inf\{t\geq 0:H(m,t)\geq 1-\alpha\}, \qquad \alpha \in (0,1).\] For $t\geq 0$, write \[H(m, t-)=\lim_{s \uparrow t} H(m,s), \] with $H(m, 0-)=0$. Define the contour directly 
    \begin{equation}\label{eq:radiusdirect}
        \pi_x^*(\vartheta)=P_{\theta_0}\{\rho(\theta_0, \hat \theta(X))\geq \rho(\vartheta, \hat \theta(x))|M(x)=m(x)\}=1-H\big(M(x), \rho(\vartheta, \hat \theta(x))-\big).
    \end{equation}

    Assume the mild quantile-separation condition \[t > \tau^*_\alpha(m) \implies H(m, t-)>1-\alpha \quad \text{for $\mu_M$-a.e. $m$} .\]  This is automatic e.g. if $H(m, \cdot)$ is continuous and has no flat portions. Then $\pi^*$ is a normalized, $G$-equivariant, $M$-valid (hence valid) possibility contour. Its $\alpha$-cuts are exactly the orbitwise minimal feasible-radius confidence sets \[A_\alpha^*(x)=\{\vartheta:\pi_x^*(\vartheta)\geq \alpha \}=\vartheta:\rho(\vartheta, \hat \theta(x))\leq \tau^*_\alpha(M(x))\}.\] Consequently, the $\alpha$-cuts of $\pi^*$ are nested, valid, $G$-equivariant, and orbitwise minimal among $M$-dependent $\rho$-ball procedures satisfying the conditional coverage constraint. Thus, for any Choquet-risk criterion whose $\alpha$-cut risk is monotone in these radii, $\pi^*$ is the corresponding orbitwise radius-minimal contour.
\end{proposition}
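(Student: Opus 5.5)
The plan is to reduce every claim to properties of the pivot $D(X):=\rho(\theta,\hat\theta(X))$. Write $D_0=\rho(\theta_0,\hat\theta(X))$ for its value at the reference point. The key input, which I take from Lemma~\ref{lem:A3}, is a pivotal fact: for every $\theta$, the conditional law of $D(X)$ given $M(X)=m$ under $P_\theta$ equals that of $D_0$ under $P_{\theta_0}$.

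To see why this holds, use transitivity to pick $g$ with $g\theta_0=\theta$. Then $X\sim P_\theta$ gives $g^{-1}X\sim P_{\theta_0}$. Invariance of $\rho$ and equivariance of $\hat\theta$ give $\rho(\theta,\hat\theta(X))=\rho(\theta_0,\hat\theta(g^{-1}X))$, and invariance of $M$ gives $M(X)=M(g^{-1}X)$. So the conditional law of $D$ given $M=m$ under $P_\theta$ is $H(m,\cdot)$, for every $\theta$. Once this is in hand, the second equality in \eqref{eq:radiusdirect} is just $P\{D_0\ge s\mid M=m\}=1-H(m,s-)$.

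The steps, in order:

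\textbf{Normalization.} Since $\rho(\vartheta,\vartheta)=0$ and $H(m,0-)=0$, evaluating at $\vartheta=\hat\theta(x)$ gives $\pi^*_x(\hat\theta(x))=1$.

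\textbf{Equivariance.} We have $M(gx)=M(x)$ and $\rho(g\vartheta,\hat\theta(gx))=\rho(g\vartheta,g\hat\theta(x))=\rho(\vartheta,\hat\theta(x))$, so $\pi^*_{gx}(g\vartheta)=\pi^*_x(\vartheta)$ directly from \eqref{eq:radiusdirect}.

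\textbf{Cut identification.} I need $1-H(m,t-)\ge\alpha$ if and only if $t\le\tau^*_\alpha(m)$.
\begin{itemize}
\item If $t\le\tau^*_\alpha$, then every $s<t$ satisfies $s<\tau^*_\alpha$, so $H(m,s)<1-\alpha$ by the definition of the infimum. Letting $s\uparrow t$ gives $H(m,t-)\le 1-\alpha$.
\item If $t>\tau^*_\alpha$, the quantile-separation assumption gives $H(m,t-)>1-\alpha$.
\end{itemize}
Substituting $t=\rho(\vartheta,\hat\theta(x))$ shows $A^*_\alpha(x)$ is exactly the stated ball, off an $M$-null set. Nestedness follows because $\tau^*_\alpha$ is nonincreasing in $\alpha$.

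\textbf{$M$-conditional validity.} By the pivotal fact,
\[
P_\theta\{\theta\in A^*_\alpha(X)\mid M\}=H(M,\tau^*_\alpha(M))\ge 1-\alpha .
\]
The inequality uses right-continuity of the conditional distribution function $H(m,\cdot)$, so the infimum defining $\tau^*_\alpha$ is attained. The tower property then gives ordinary validity. Equivalently, $\pi^*_X(\theta)\mid M$ is a conditional p-value of $D$, hence stochastically at least uniform.

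\textbf{Orbitwise minimality.} Take any $M$-dependent ball rule $\{\rho(\vartheta,\hat\theta(x))\le \tau(M(x))\}$ with conditional coverage at least $1-\alpha$. By the pivotal fact its conditional coverage is $H(m,\tau(m))$, so $H(m,\tau(m))\ge1-\alpha$ for a.e.\ $m$, which forces $\tau(m)\ge\tau^*_\alpha(m)$ by the definition of the infimum.

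\textbf{Risk statement.} If the $\alpha$-cut risk is monotone in the radius, then pointwise $\tau\ge\tau^*_\alpha$ integrates, via Lemma~\ref{lem:alpha-cut-minimal}, to minimality of $\pi^*$ among such ball contours.

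The main obstacle is the measure-theoretic bookkeeping in the pivotal step. Regular conditional distributions are defined only a.e., and the claim that they transport along $g$ for every $\theta$ has to hold with a single version of $H$ that is valid simultaneously across all $\theta$. I will lean on the regularity hypotheses of Lemma~\ref{lem:A3} for exactly this point. The ``$\mu_M$-a.e.'' qualifications then propagate into the cut identification, which therefore holds only almost surely.
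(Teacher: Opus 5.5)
Your proposal is correct and follows essentially the same route as the paper. The paper delegates the pivot transport, conditional validity, nesting and orbitwise minimality to Lemma~\ref{lem:A3}, whose argument you reproduce, and proves only the cut identification, using exactly your generalized-inverse plus quantile-separation argument; your version is slightly more complete, since you verify normalization and contour equivariance directly from \eqref{eq:radiusdirect} and explicitly handle the boundary case $t=\tau^*_\alpha(m)$, which the paper's proof (argued only for $r<\tau^*_\alpha(m)$) leaves implicit.
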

\begin{proof}
    The detailed proof of measurability, equivariance, validity, nesting, and orbitwise minimality for the confidence sets $A^*_\alpha(x)$ is given in Lemma \ref{lem:A3}. The only thing not contained in the proof of Lemma \ref{lem:A3} is that the direct construction given above in \eqref{eq:radiusdirect} has $\alpha$-cuts given by these confidence sets $A^*_\alpha(x)$.

    Fix $x,m=M(x)$, and write \[r=\rho(\vartheta,\hat \theta(x)).\] Then \[\pi_x^*(\vartheta)\geq \alpha \Longleftrightarrow 1-H(m, r-)\geq\alpha \Longleftrightarrow H(m, r-)\leq 1-\alpha.\] By the generalized-inverse definition of $\tau^*_\alpha(m)$, if $r<\tau^*_\alpha(m)$, then $H(m, r-)\leq 1-\alpha$. Conversely, under the quantile-separation condition, if $r>\tau^*_\alpha(m)$, then $H(m,r-)>1-\alpha$. Hence \[\pi_x^*(\vartheta\geq\alpha \Longleftrightarrow r\leq \tau^*_\alpha(m),\] which gives \[\{\vartheta:\pi_x^*(\vartheta)\geq \alpha\}=\{\vartheta: \rho(\vartheta, \hat \theta(x))\leq \tau^*_\alpha(M(x))\}.\] That is exactly the ball family identified in Lemma \ref{lem:A3}.
\end{proof}

Proposition \ref{prop:validifiedradius} gives the general conditional-validification recipe. In the Gaussian location model, with trivial maximal invariant, equivariant center $\hat u(Z)=Z$, and squared Euclidean radius $\rho(v,z)=\|v-z\|^2$, this recipe yields the Gaussian contour in Corollary \ref{cor:gaussian-possibility-minimax}. And in the Gaussian location model, there is available a ball-reduction result, so the contour is also minimax.

\section{Direct Comparisons via Simulation Experiments}\label{sec:directComp}
Most of the paper so far has focused on \emph{theoretical} comparisons of possibilistic procedures. Those results are valuable because they identify ``best possible'' procedures within a principled class, and explain \emph{why} optimality emerges.

But the Choquet-risk framework is also useful in a more pragmatic, \emph{case-by-case} setting. It can be used to directly compare two (or more) concrete procedures by estimating their pointwise Choquet risks via Monte Carlo and plotting resulting risk surfaces. This complements the inferential model (IM) literature, where comparisons are often presented as side-by-side contour plots for a particular dataset. For example, \citet{Martin2025} emphasizes nuisance-parameter elimination as a key practical issue and compares \emph{extension-based} and \emph{profile-based} marginalization strategies, including normal-mean and gamma-mean illustrations.

The key hindrance for direct comparisons is primarily \emph{computational cost}, as the Monte Carlo simulations can quickly become expensive. Another major hindrance is the fact that we have to rely on visual inspection. For multiparameter comparisons, or marginal comparisons where risk depends on nuisance, we have to either limit the scope of the comparison or introduce yet more subjective choice by e.g. integrating out parameter influence via priors, or taking averages—also further exaggerating the computational burden.

Details on computational implementation for these simulation comparison studies will not be covered here, but some quick comments are in order. For more on the computational aspects of IMs, see \citet{MartinIMMC}. First, because the $\alpha$-integral in the Choquet risk representation $\int_0^1r_\alpha(\theta,\Pi)d\alpha$ is with respect to the uniform measure, we can ``randomize $\alpha$''and estimate the risk via a doubly-randomized Monte Carlo scheme. If $U\sim U(0,1)$ is independent of $X$, then
\begin{equation*}
    R(\theta,\Pi)=E_\theta E_U\bigl[\sup_{\vartheta \in A_U(X)} \Gamma(X, \theta. \vartheta) \bigr],
\end{equation*}
so a single (or small number of) $\alpha$-levels per dataset yields an unbiased risk estimate. This aids computational implementation, but not efficiency; since the integral is one-dimensional, the Monte Carlo error is $O(1/\sqrt{n})$. Deterministic methods typically converge faster. Second, when multiple $\alpha$-cuts are needed per data-draw, nestedness can often be exploited. Evaluating levels in decreasing (or increasing) order allows warm-starting the inner optimization, so the marginal cost of additional $\alpha$'s is typically far smaller than recomputing each optimization from scratch. Thirdly, the inner supremum often admits structural shortcuts. For many distance-to-truth penalties, maximizers over $A_\alpha(x)$ occur on the boundary level set $\{\pi_x=\alpha\}$, which motivates boundary-based optimization/sampling rather than searching over the full cut. Finally, since for direct comparisons it is often the \emph{relative risk} that is of primary interest, reusing common random numbers for the risk computations per procedure can drastically reduce the Monte-Carlo noise in the relative risk estimate.

In some structured comparisons, the direct risk comparison is simpler than a
full risk surface.  If the model, contours, and penalty are compatible with the
same group structure, then the pointwise risks may either be parameter-free, or
share a common scale factor.  In those cases the relative risk reduces to a single
number for each sample size.  The next example illustrates this in the exponential
rate model.
\begin{example}[Exponential rate: radial contour vs. calibrated LR]
Let $X_1,\ldots,X_n$ be i.i.d. exponential with rate $\lambda>0$, and write
\[
        S=\sum_{i=1}^n X_i,\qquad \hat\lambda=\frac{n}{S}.
\]
We compare two valid contours for $\lambda$.  The first is the calibrated
likelihood-ratio contour from Example \ref{ex:expunb},
\[
        \pi_x^{\rm LR}(\lambda)
        =
        P_\lambda\{q_X(\lambda)\le q_x(\lambda)\},
\]
where
\[
        q_x(\lambda)
        =
        \left(\frac{\lambda S}{n}\right)^n
        \exp\{-(\lambda S-n)\}.
\]
Equivalently, if $Y\sim \Gamma(n,1)$, then the $\alpha$-cut of
$\pi^{\rm LR}$ is
\[
        A_\alpha^{\rm LR}(x)
        =
        \left[
          \frac{y_{1,\alpha}}{S},
          \frac{y_{2,\alpha}}{S}
        \right],
        \qquad \alpha\in(0,1),
\]
where $y_{1,\alpha}<n<y_{2,\alpha}$ are chosen so that
\[
        P\{y_{1,\alpha}\le Y\le y_{2,\alpha}\}=1-\alpha,
        \qquad
        y_{1,\alpha}^n e^{-y_{1,\alpha}}
        =
        y_{2,\alpha}^n e^{-y_{2,\alpha}}.
\]
The equal-likelihood endpoint condition is also the usual unbiasedness
condition for the inverted two-sided test, since
\[
        \frac{d}{dy}\{y^n e^{-y}\}
        =
        -(y-n)y^{n-1}e^{-y}.
\]
Thus $\pi^{\rm LR}$ is the exact two-sided LR, equivalently UMPU, p-value
contour.

The second contour is the radial contour obtained from Proposition
\ref{prop:validifiedradius}.  The scale group acts by
\[
        c\cdot x = x/c,\qquad c\cdot \lambda=c\lambda,
        \qquad c>0.
\]
The estimator $\hat\lambda=n/S$ is equivariant, and a natural invariant radius is
the log-radius
\[
        \rho(\lambda,\lambda')=|\log(\lambda/\lambda')|.
\]
The maximal invariant for this scale action is, for example,
\[
        M(X)=\left(\frac{X_1}{S},\ldots,\frac{X_n}{S}\right),
\]
viewed on the simplex.  In the exponential scale model, $M(X)$ is independent
of $S$.  Therefore the conditional radius distribution required in Proposition
\ref{prop:validifiedradius} is the same as the unconditional one.  Since
$\lambda S\sim\Gamma(n,1)$,
\[
        \rho(\lambda,\hat\lambda)
        =
        \left|\log\frac{\lambda S}{n}\right|
\]
has a distribution that does not depend on $\lambda$.  Hence the
radial-validified contour is
\[
        \pi_x^{\rm rad}(\vartheta)
        =
        P\left\{
          \left|\log\frac{n}{Y}\right|
          \ge
          \left|\log\frac{\vartheta}{\hat\lambda}\right|
        \right\},
        \qquad Y\sim\Gamma(n,1).
\]
Its $\alpha$-cuts are log-symmetric intervals around $\hat\lambda$:
\[
        A_\alpha^{\rm rad}(x)
        =
        \left[
          \hat\lambda e^{-c_{n,\alpha}},
          \hat\lambda e^{c_{n,\alpha}}
        \right],
        \qquad \alpha\in(0,1),
\]
where $c_{n,\alpha}$ is defined by
\[
        P\left\{
          \left|\log\frac{n}{Y}\right|
          \le c_{n,\alpha}
        \right\}
        =
        1-\alpha,
        \qquad Y\sim\Gamma(n,1).
\]
Thus the two procedures differ in finite samples: the LR contour uses
equal-likelihood endpoints, while the radial contour enforces log-symmetric
balls around the equivariant center.

We compare the procedures under three penalties.  The first is log-radius
accuracy,
\[
        \Gamma_\lambda^{\log}(\vartheta)
        =
        |\log(\vartheta/\lambda)| .
\]
The second is ordinary Lebesgue concentration on the rate scale, i.e. the
concentration penalty in \eqref{eq:concentration-penalty} with
$\varphi(\ell)=\ell$, so that the loss is
\[
        L^{\rm Leb}(\lambda,\Pi_x)
        =
        \int_0^1 |A_\alpha(x)|\,d\alpha .
\]
The third is per-observation symmetrized Kullback--Leibler accuracy,
\[
        \Gamma_\lambda^{\rm sKL}(\vartheta)
        =
        {\rm KL}(P_\lambda,P_\vartheta)
        +
        {\rm KL}(P_\vartheta,P_\lambda)
        =
        \frac{\vartheta}{\lambda}
        +
        \frac{\lambda}{\vartheta}
        -2 .
\]
If $P_\lambda$ is instead taken to denote the full $n$-sample law, then the
right-hand side is multiplied by $n$; this common factor does not affect the
relative-risk comparisons below.

For both contours, the $\alpha$-cuts have the form
\[
        A_\alpha^m(X)=\hat\lambda\,B_{\alpha,n}^m,
        \qquad m\in\{\mathrm{LR},\mathrm{rad}\},
\]
where the deterministic sets $B_{\alpha,n}^m$ are
\[
        B_{\alpha,n}^{\rm LR}
        =
        \left[
          \frac{y_{1,\alpha}}{n},
          \frac{y_{2,\alpha}}{n}
        \right],
        \qquad
        B_{\alpha,n}^{\rm rad}
        =
        \left[
          e^{-c_{n,\alpha}},
          e^{c_{n,\alpha}}
        \right].
\]
Since
\[
        \frac{\hat\lambda}{\lambda}
        =
        \frac{n}{\lambda S},
        \qquad \lambda S\sim\Gamma(n,1),
\]
the ratio $\hat\lambda/\lambda$ has a distribution depending only on $n$.
Therefore the log-radius risks do not depend on $\lambda$:
\[
        R_n^{\log}(\lambda,\Pi^m)=R_n^{\log}(1,\Pi^m).
\]
The same scale-invariance argument gives
\[
        R_n^{\rm sKL}(\lambda,\Pi^m)=R_n^{\rm sKL}(1,\Pi^m),
        \qquad n>1,
\]
where the condition $n>1$ ensures finiteness, since the sKL loss contains a
term proportional to $\hat\lambda/\lambda=n/(\lambda S)$ and
$E\{1/(\lambda S)\}<\infty$ only for $n>1$.

For the Lebesgue concentration penalty, lengths scale linearly with the rate:
\[
        |A_\alpha^m(X)|
        =
        \hat\lambda\,|B_{\alpha,n}^m|
        =
        \lambda
        \left(\frac{\hat\lambda}{\lambda}\right)
        |B_{\alpha,n}^m|.
\]
Consequently
\[
        R_n^{\rm Leb}(\lambda,\Pi^m)
        =
        \lambda R_n^{\rm Leb}(1,\Pi^m),
        \qquad n>1,
\]
again with $n>1$ ensuring finiteness.  Thus, for finite-risk comparisons under
all three penalties, the relative risks
\[
        {\rm RR}_n^\Gamma
        =
        \frac{R_n^\Gamma(\lambda,\Pi^{\rm rad})}
             {R_n^\Gamma(\lambda,\Pi^{\rm LR})}
\]
are independent of $\lambda$.  Hence, for each sample size $n$, the comparison
reduces to a single dimensionless number.

Figure~\ref{fig:exp-rate-relative-risk} plots these relative risks as $n$ varies.
Values above one indicate that the radial contour has larger Choquet risk than
the calibrated LR contour.  The LR contour is uniformly preferred in this
experiment under all three criteria.  The finite-sample cost of enforcing radial
log-symmetry is largest for Lebesgue concentration, moderate for symmetrized
KL accuracy, and smallest for log-radius accuracy.  In each case the relative
risk approaches one as $n$ increases. Since the LR contour converges to a Gaussian contour, this might be reflecting that both contours approach the
same local Gaussian/Fisher--Rao shape asymptotically.
\end{example}

\begin{figure}[H]
    \centering
    \includegraphics[width=0.5\linewidth]{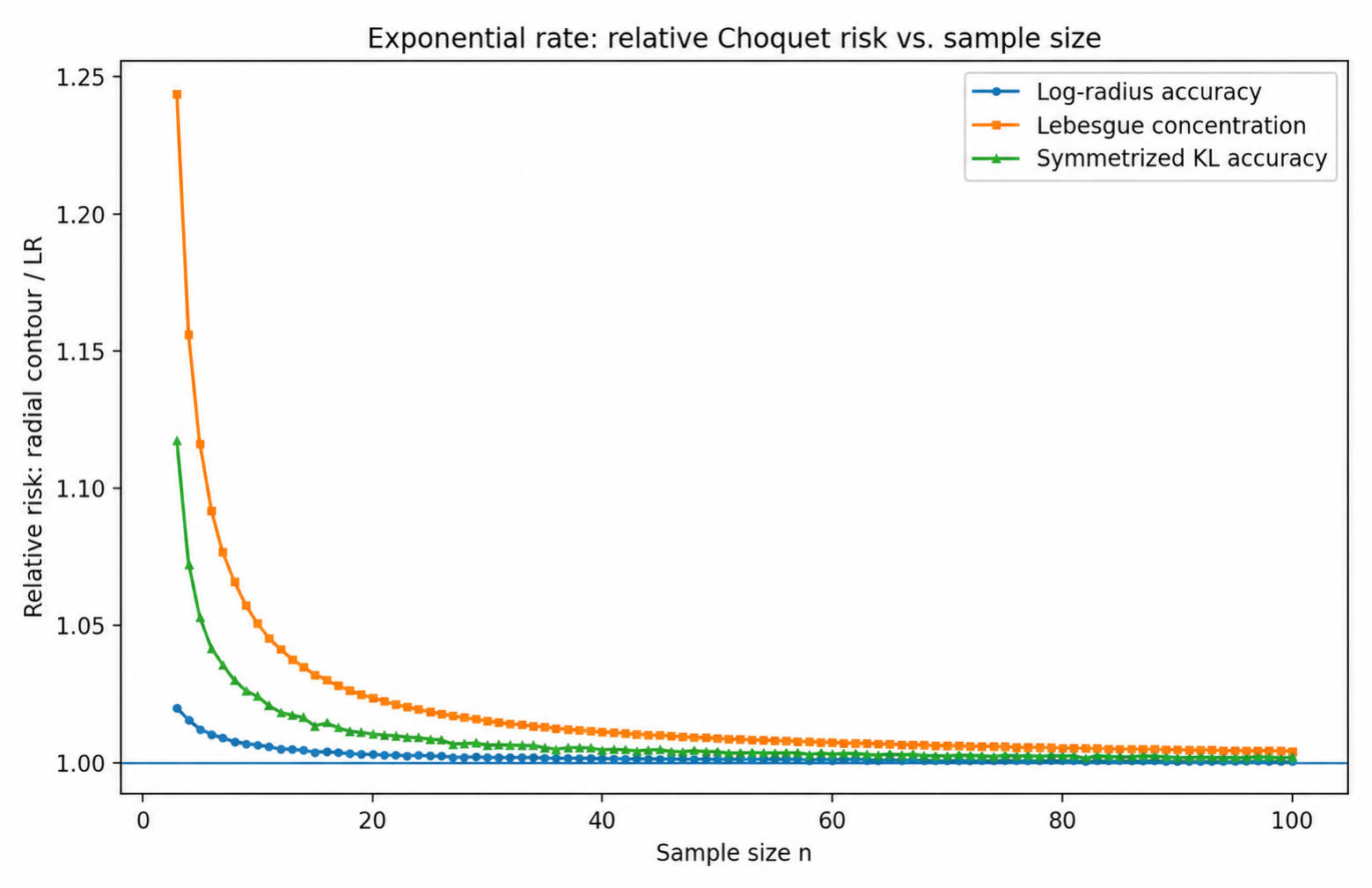}
    \caption{Exponential rate model: relative Choquet risk of the radial contour
from Proposition \ref{prop:validifiedradius} against the calibrated LR contour.  Values above one
indicate larger risk for the radial contour.  The comparison is scalar for each
$n$ because scale equivariance removes dependence on the unknown rate.}
\label{fig:exp-rate-relative-risk}
\end{figure}

\begin{example}[Normal mean with unknown variance: extension vs profile]\label{ex:norm-mean} \citet{Martin2025} revisits Fisher's analysis of Darwin's paired data under a normal model, focusing on inference for the mean $\phi=g(\theta)=\theta_1$ when $\theta=(\theta_1, \theta_2)=(\mu,\sigma^2)$ includes the nuisance scale parameter. In that example, the review compares \emph{extension-based} and \emph{profile-based} marginal IM contours for $\mu$, and notes that the profile-based contour coincides with the two-sided Student-t p-value function and result in a tighter (more efficient) marginal contour.

More generally, the review defines extension-based marginalization by
\begin{equation*}
    \pi_x^{ext}(\varphi)=\sup_{\vartheta:g(\vartheta)=\varphi} \pi_x(\vartheta)
\end{equation*}
    and notes that—while this is simple and preserves validity—one should typically expect it to be \emph{conservative}. 

For profile-based marginalization, define first the profile the relative likelihood
\begin{equation*}
    LR_{prof}(x, \varphi)=\sup_{\vartheta:g(\vartheta)=\varphi} LR(x, \vartheta).
\end{equation*}
Then, followed by validification, we find the profile-marginal contour of the form
\begin{equation*}
    \pi_x^{prof}(\varphi)=\sup_{\vartheta:g(\vartheta)=\varphi}P_\vartheta\bigl\{LR_{prof}(X, \varphi)\leq LR_{prof}(x, \varphi) \bigr\}.
\end{equation*}

The review compares these strategies in the Fisher example for the \emph{fixed} data set of Darwin's, by plotting the different contour functions next to each other. In their case, the comparison is quite clear, as the extension-based contour fully dominates the profiled contour, i.e. is larger everywhere. We complement this with a \emph{procedure-level} comparison: we replace Darwin's fixed dataset by a controlled Monte Carlo experiment as follows.

We assume the data model $X_i\sim N(\mu,\sigma^2)$ iid for $i={1,\dots,n}$ with $n=15$. We further impose the squared error as penalty functional $\Gamma_\theta(\vartheta)=(\vartheta-\theta)^2$ for the decision problem, which is natural for a location parameter. A Monte-Carlo simulation is then employed to compare risks over a grid of $(\mu, \sigma^2)$. By translation equivariance, the risks do not depend on the value of \(\mu\).
Moreover, under the rescaling \(X_i=\mu+\sigma Z_i\), the marginal
\(\alpha\)-cuts for \(\mu\) scale linearly with \(\sigma\). Since the penalty is
squared error, the corresponding Choquet risks scale quadratically:
\[
  R\{(\mu,\sigma^2),\Pi^m\}
  =
  \sigma^2 R\{(0,1),\Pi^m\},
  \qquad
  m\in\{\mathrm{ext},\mathrm{prof}\}.
\]
It is therefore enough to compare the procedures as a function of the scale
parameter \(\sigma\) (equivalently, as a function of the variance \(\sigma^2\)). Figure \ref{fig:choquet-normal}(a) shows that the profile-based marginal
contour has smaller Choquet risk throughout the grid. The increasing vertical
separation between the absolute-risk curves should not be interpreted as a
deterioration of the relative-likelihood ordering or of the calibration as
\(\sigma\) grows. It is simply the scale effect induced by squared-error loss:
larger \(\sigma\) produces wider plausibility regions for \(\mu\), and squared
error measures those widths in squared units. After normalizing by the
profile-based risk, this common \(\sigma^2\) factor cancels. The relative risk is
therefore essentially constant, showing that the efficiency loss from using
extension-based rather than profile-based marginalisation is stable across the
variance grid. In this experiment, extension costs about \(1.54\) times as much
as profiling under the chosen loss.
    
\end{example}

\begin{figure}[H]
  \centering
  \begin{subfigure}[t]{0.49\textwidth}
    \centering
    \includegraphics[width=\linewidth]{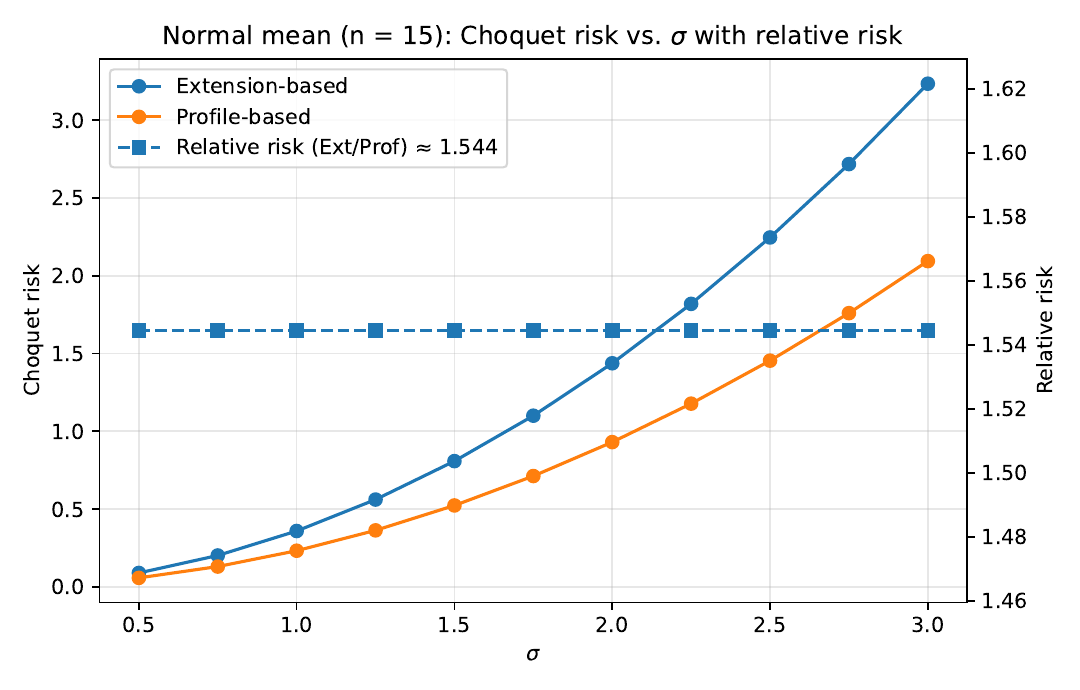}
    \caption{Normal mean ($n=15$): Choquet risk vs.\ $\sigma$.}
    \label{fig:choquet-normal}
  \end{subfigure}\hfill
  \begin{subfigure}[t]{0.49\textwidth}
    \centering
    \includegraphics[width=\linewidth, height=0.64\linewidth]{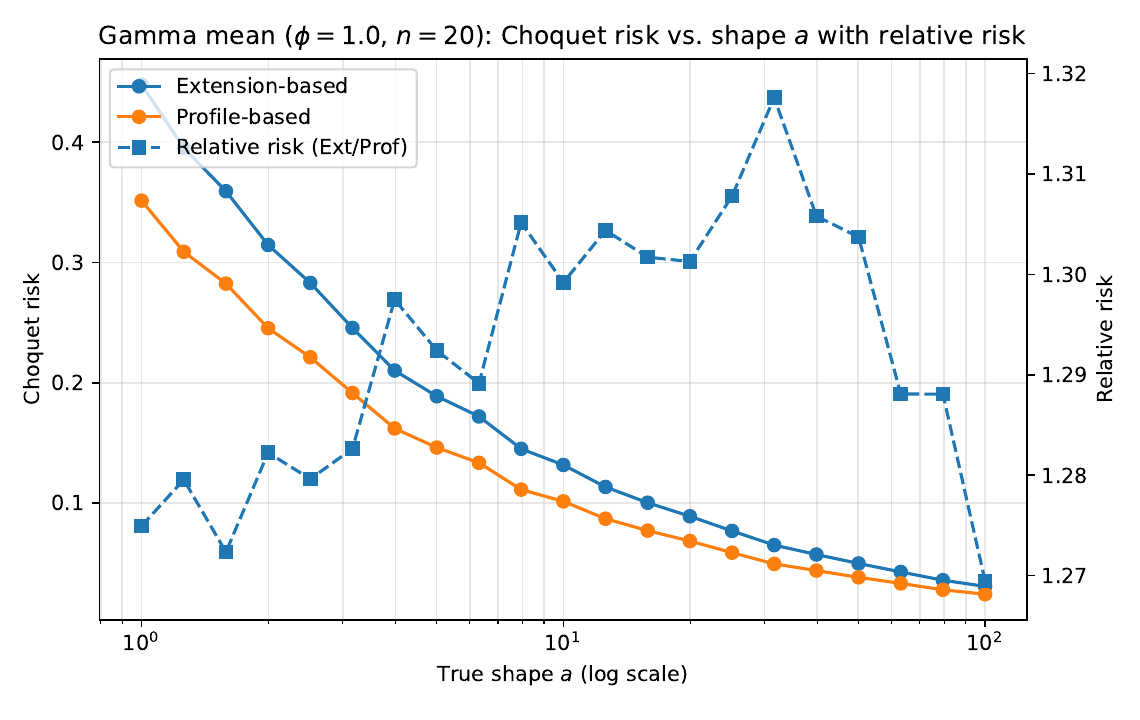}
    \caption{Gamma mean ($\phi=1.0$, $n=20$): Choquet risk vs.\ shape $a$.}
    \label{fig:choquet-gamma}
  \end{subfigure}

  \caption{Comparison of extension-based and profile-based Choquet risk estimates across settings.}
  \label{fig:choquet-side-by-side}
\end{figure}

\begin{example}[Gamma mean: extension vs profile direct comparison.]
    Let $X_1,\dots,X_n$ be i.i.d.\ from a Gamma distribution with unknown shape--scale parameter
$\vartheta=(a,b)\in(0,\infty)^2$. The mean is
\[
\varphi = m(\vartheta) := ab \in \Phi=(0,\infty),
\]
and the remaining degree of freedom is nuisance. In this problem, inference for $\varphi$ is known to be
surprisingly subtle, and likelihood-based marginal IM constructions are conceptually straightforward but
computationally non-trivial because the relevant profile-likelihood sampling distribution depends on the
nuisance parameter. (See Example~6 in the IM review for additional discussion.)%

Consider again the two marginalization strategies Example \ref{ex:norm-mean}, but now applied to the joint relative-likelihood contour for the parameter $\vartheta$ in the Gamma model. In the Gamma mean problem, the sampling distribution
of $R_{\mathrm{prof}}(X,\phi)$ depends on the nuisance parameter, so the calibration step for $\pi_x^{\mathrm{prof}}$ is carried out
numerically (Monte Carlo).

We evaluate the two procedures via Choquet risk under the log-loss penalty
\[
\Gamma_{\varphi}(\phi) := \bigl|\log(\phi/\varphi)\bigr|,\qquad \phi\in\Phi.
\]
This choice is motivated by the fact that the mean is a positive parameter and errors are therefore more naturally assessed on a multiplicative rather than an additive scale. In particular, $\Gamma_\varphi(\phi)$ is invariant under changes of units, since
\[
|\log\{(c\phi)/(c\varphi)\}|=|\log(\phi/\varphi)|,
\]
and it treats over- and under-estimation by the same factor symmetrically:
\[
\Gamma_\varphi(c\varphi)=\Gamma_\varphi(\varphi/c)=\log c.
\]

For $m\in\{\mathrm{ext},\mathrm{prof}\}$, let $A^m_\alpha(x)=\{\phi:\pi_x^m(\phi)\ge \alpha\}$ denote the $\alpha$-cut of the marginal
contour. The Choquet loss and risk are
\[
L(\varphi,\Pi^m_x)
:=\int_0^1 \sup_{\phi\in A^m_\alpha(x)} \Gamma_{\varphi}(\phi)\,d\alpha,
\qquad
R(\vartheta,\Pi^m)
:= E_{\vartheta}\!\left\{L\bigl(m(\vartheta),\Pi^m_X\bigr)\right\}.
\]
Since $A^m_\alpha(x)$ is typically a connected interval in $\Phi$, the inner supremum is attained at an endpoint,
so $L(\varphi,\Pi_x^m)$ can be computed efficiently once the $\alpha$-cuts are available. To compare methods on a
dimensionless scale, we also consider the (pointwise) relative risk
\[
\mathrm{RR}(\vartheta) := \frac{R(\vartheta,\Pi^{\mathrm{ext}})}{R(\vartheta,\Pi^{\mathrm{prof}})}.
\]
Values $\mathrm{RR}(\vartheta)>1$ indicate an efficiency loss from using extension-based marginalisation relative
to profiling. 

\paragraph{Simulation design}
We fix $n=20$ and evaluate risks over a grid of nuisance values by holding the true mean at $\varphi=1$ and varying
the true shape $a$. Specifically, for each $a$ on a log-scale grid we set $b=1/a$ (so that $ab=1$), generate repeated samples
$X^{(k)}\sim \mathrm{Gamma}(a,b)$, compute $\pi^{\mathrm{ext}}_{X^{(k)}}$ and $\pi^{\mathrm{prof}}_{X^{(k)}}$, and approximate the Choquet loss
by numerical integration over a fine grid $\alpha\in[\alpha_{\min},1]$ (with $\alpha_{\min}=10^{-4}$). Averaging over Monte Carlo
replicates yields estimates of $R((a,1/a),\Pi^{\mathrm{ext}})$ and $R((a,1/a),\Pi^{\mathrm{prof}})$, and hence $\mathrm{RR}(a,1/a)$, shown in Figure \ref{fig:choquet-gamma}.

The results in Figure \ref{fig:choquet-gamma} give a clear finite-sample preference for the
profile-based marginal IM. Across the full range of nuisance values considered,
the estimated Choquet risk of the profile-based procedure is below that of the
extension-based procedure. Both risks decrease as the true shape parameter
\(a\) increases. This is expected: throughout the experiment we keep the mean
fixed at \(\phi=ab=1\), so \(b=1/a\), and hence
\[
        \operatorname{Var}_{(a,1/a)}(X_i)=ab^2=\frac{1}{a}.
\]
Larger values of \(a\) therefore correspond to less variable, and less skewed,
Gamma samples, so both marginal contours become tighter under the log-relative
loss.

The absolute separation between the two risks is largest for small values of
\(a\), where the Gamma distribution is most skewed and the nuisance effect is
most pronounced. As \(a\) increases, both procedures improve and the absolute
gap narrows. On the relative-risk scale, however, the efficiency loss from
extension-based marginalisation is fairly stable: throughout the grid,
\(RR(a,1/a)>1\), with values roughly between \(1.27\) and \(1.32\). Thus, in
this experiment, extension-based marginalisation incurs approximately a
\(25\)--\(30\%\) larger Choquet risk than profiling. The small oscillations in
the relative-risk curve should not be over-interpreted, since they are likely
Monte Carlo noise from estimating risks and taking their ratio. The main conclusion is the persistent separation between the two risk
curves, not the exact local shape of the relative-risk curve.
\end{example}

\begin{remark}
In the above comparisons, the results are unambiguous and visually clear: the validified LR contour construction is more efficient, and the profile-based marginal procedures has
smaller estimated Choquet risk throughout the nuisance grid. More generally,
however, when pointwise risk depends on nuisance parameters, \(RR(\vartheta)\)
only gives a pointwise comparison. In other examples, one procedure may be
better for some nuisance values and worse for others. When this is the case, one can aggregate risk over the nuisance, such as by arithmetic or geometric means (depending on the parameter). Such summaries can be useful descriptively, but they necessarily introduce yet another level of subjectivity into the analysis.
\end{remark}

\section{Optimality invariance under reparametrisation}\label{sec:reparinvariance}
Let $\Theta$ be the original parameter space and let $g:\Theta \rightarrow\Phi$, with $\phi=g(\theta)$ a bijective measurable reparametrisation with inverse $\theta=g^{-1}(\phi)$. A possibility measure $\Pi_x$ on $\Theta$ is  meant to summarise ``all available inference'' about $\theta$ given $x$. In that spirit, a mere coordinate change should not change inferential content.

At the level of possibility measure, this is exactly the usual extension / push-forward principle. Define a possibility on $\Phi:=g(\Theta)$ by
\begin{equation*}
    \Pi_x^\phi(B):=\Pi_x(g^{-1}(B)), \qquad  B\subseteq\Phi.
\end{equation*}
Equivalently, at the contour level,
\begin{equation*}
    \pi_x^\phi(\varphi)=\sup_{\vartheta:g(\vartheta)=\varphi}\pi_x(\vartheta)=\pi_x(g^{-1}(\varphi)),
\end{equation*}
and the $\alpha$-cuts transform by
\begin{equation*}
    A_\alpha^\phi(x)=\{\varphi:\pi_x^\phi(\varphi)\geq \alpha\}=g(A_\alpha(x)).
\end{equation*}
So validity and ``structural'' properties expressed via $\alpha$-cuts (nestedness, coverage, unbiasedness, etc.) are preserved under bijections: the inferential object is essentially the same, just expressed in new coordinates.

However, Choquet-risk optimality is not purely an inferential property of $\Pi_x$; it is a property relative to a \emph{penalty} $\Gamma$ used to evaluate a procedure. This is where reparameterization can cause issues unless handled properly.

If we view the reparameterization as a mere change of coordinates, then the \emph{decision problem itself} should be invariant. Concretely, this means: \emph{push forward the penalty along with the parameter}.

Suppose the original penalty is $\Gamma_\theta(\vartheta)$ on $\Theta$. Define the parameterized penalty on $\Phi$ by the natural transport
\begin{equation*}
    \Gamma(x, \phi, \varphi):=\Gamma(x,g^{-1}(\phi), g^{-1}(\varphi)).
\end{equation*}
Then the Choquet loss computed in the new coordinates coincides with the original one. The supremum of the penalty over $g(A_\alpha(x))$ is the same after pulling back by $g^{-1}$, and integrating over $\alpha$ gives equality of Choquet losses, hence equality of risks. 

Under this ``transform-$\Gamma$'' convention, \emph{Choquet risk is invariant} under bijective reparametrisations, and therefore if $\Pi^*$ is Choquet-risk optimal in $(\Theta, \Gamma)$ over a class $\Pi_{\mathcal C}$, then $\Pi^{*\phi}$ is Choquet-risk optimal in $(\Phi, \Gamma_\phi)$ over the push-forward class $\Pi_{\mathcal C}^\phi:=\{\Pi^\phi:\Pi\in \Pi_{\mathcal C}\}$. This holds whenever $g$ is bijective; the class $\Pi_{\mathcal C}$ is stable under push-forward, and the penalty is transformed as above.

If we instead reparametrise $\theta \mapsto \phi$ but keep the same functional form of $\Gamma$ in the new coordinates (e.g. ``squared error in $\phi$'' instead of ``squared error in $\theta$''), then we have \emph{not} performed a coordinate change; we have changed the loss, hence \emph{changed the decision problem}.

This is not a pathology—it is exactly what should happen when the loss encodes domain preference. But it does mean that ``optimality invariance under reparametrisation'' is only meaningful after declaring whether the loss should be \emph{coordinate-dependent} (parameter has physical meaning / preferred scale), or \emph{coordinate-free} (pure inference summary; coordinate system shouldn't matter).

In the IM philosophy, $\Pi_x$ is supposed to summarise \emph{all available inference} without injecting prior information (at least in the vacuous treatment here). A coordinate-dependent penalty can smuggle in preferences about scale/geometry that are not determined by the inference problem—unless the parameter itself comes with a physical or scientific meaning that makes a particular scale ``canonical'' or ``natural''.

So absent such external meaning, it is attractive to choose penalties that are \emph{intrinsic} (coordinate-free) or at least \emph{canonically induced} by the model, so that ``efficiency comparisons'' do not depend on arbitrary parametrisations. 

\subsection{Invariant scale/size penalties}

The most direct way to make a concentration criterion compatible with model invariance is to
replace ordinary Lebesgue size by an invariant notion of size. A key observation is that the
reduction in Corollary \ref{cor:concentration-optimal} uses only monotonicity under inclusion of the $\alpha$-cuts; it
does not depend on any specifically Euclidean notion of length or volume. Thus, in a
transformation model, if $\mu_{\mathrm{Haar}}$ denotes a Haar measure, or more generally any
measure induced by the group action that is invariant under the relevant transformations, we
may define the invariant concentration penalty
\begin{equation}
\Gamma_x^{\mathrm{inv}}(\vartheta)
=
\phi\!\left[
\mu_{\mathrm{Haar}}\!\bigl\{A_{\pi_x(\vartheta)}(x)\bigr\}
\right].
\end{equation}
Exactly as in Section \ref{sec:choclossandrisk}, nestedness of the $\alpha$-cuts implies that the corresponding
Choquet loss reduces to
\[
L^{\mathrm{inv}}(\theta,\Pi_x)
=
\int_0^1
\phi\!\left[
\mu_{\mathrm{Haar}}\!\bigl\{A_{\pi_x(\vartheta)}(x)\bigr\}
\right]\,ds,
\]
and therefore
\[
R^{\mathrm{inv}}(\theta,\Pi)
=
\int_0^1
E_\theta\!\left\{
\phi\!\left[
\mu_{\mathrm{Haar}}\!\bigl\{A_{\pi_x(\vartheta)}(x)\bigr\}
\right]
\right\}ds.
\]

In the special case $\phi(\ell)=\ell$, the Choquet risk reduces to integrated
expected Haar-size of the $\alpha$-cuts. Thus every levelwise size-optimality result for
confidence sets carries over verbatim after replacing Lebesgue size by invariant size. In
particular, whenever one can identify, for each $\alpha$, a $G$-equivariant
$(1-\alpha)$-confidence set of smallest expected Haar size within the relevant class,
stacking those sets yields a Choquet-risk optimal valid equivariant contour for the invariant
concentration criterion.

This is the natural notion of concentration in transformation models. Ordinary Lebesgue
length or volume depends on the chosen coordinates and need not respect the symmetry of the
problem, whereas Haar size measures concentration in units intrinsic to the group action.

In some especially tractable invariant problems, one can go further. The levelwise optimal
equivariant confidence sets are then superlevel sets of a single reduced-space score, so the
optimal contour can be obtained directly by validifying that score, rather than by first
stacking the optimal sets one level at a time.

\begin{example}[Variance ratio: a direct invariant marginal contour]
Let $X_1,\ldots,X_m \sim N(\xi,\sigma^2)$ and $Y_1,\ldots,Y_n \sim N(\eta,\tau^2)$
independently, denote by $S_X^2$ and $S_Y^2$ the respective sample variances, and consider inference for the invariant ratio
\[
\Delta := \tau^2/\sigma^2.
\]
By sufficiency and invariance under separate translations and scales, \citet{lehmann2005testing}[Example 6.12.4] show that the equivariant confidence problem for $\Delta$ reduces
to a one-dimensional pivotal reduction $W_\Delta(X)$; equivalently, one may take
$W_\Delta(X)=\Delta/V$, up to inversion, with $V=S_Y^2/S_X^2$. Let $p$ denote the
common density of this reduced statistic, say under $\Delta=1$.

If size on the reduced space is measured by Lebesgue measure, the average-smallest
equivariant $(1-\alpha)$-confidence sets are obtained by thresholding $p$. If instead size is
measured by the invariant/Haar criterion, the relevant reduced-space density is
$\rho(w)=1/w$, so the corresponding invariant score is
\[
q_H(w)
=
\frac{p(w)}{\rho(w)}
=
w\,p(w).
\]
Thus the Haar-smallest equivariant confidence sets are superlevel sets of $q_H$, not of the
ordinary reduced-space density.

Let $F_H$ denote the distribution function of $q_H\{W_\Delta(X)\}$ under $P_\Delta$.
Since $W_\Delta(X)$ is pivotal, this law does not depend on $\Delta$. Assuming continuity,
let $c_\alpha$ be defined by
\[
P_\Delta\!\left\{
q_H\!\left(W_\Delta(X)\right) \ge c_\alpha
\right\}
=
1-\alpha.
\]
Then the average-smallest equivariant $(1-\alpha)$-confidence set is
\[
C_\alpha^\star(x)
=
\{\Delta : q_H(W_\Delta(x)) \ge c_\alpha\},
\]
and the family $\{C_\alpha^\star(x)\}_{\alpha\in(0,1)}$ is nested in $\alpha$.

This suggests defining the contour directly from the reduced score by
\[
r_x^H(\Delta) := q_H\!\left(W_\Delta(x)\right),
\qquad
\pi_x^H(\Delta)
:=
P_\Delta\!\left\{
r_X^H(\Delta) \le r_x^H(\Delta)
\right\}.
\]
Because $\Delta \mapsto W_\Delta(x)$ ranges over the reduced space, we have
\[
\sup_\Delta r_x^H(\Delta)
=
\sup_w q_H(w),
\]
which is finite and does not depend on $x$. Hence $\pi^H$ is a genuine possibility contour.
Moreover, by the probability integral transform it is valid, and since validification is
monotone in the underlying score,
\[
\{\Delta : \pi_x^H(\Delta) \ge \alpha\}
=
\{\Delta : q_H(W_\Delta(x)) \ge c_\alpha\}
=
C_\alpha^\star(x).
\]
So the $\alpha$-cuts of the contour are exactly the Haar-smallest equivariant confidence
sets.

By Lemma \ref{lem: equiv**2}, $\pi^H$ is therefore a valid $G$-equivariant possibility contour. By
Corollary \ref{cor:concentration-optimal} (with Lebesgue size exhanged for Haar size), the induced possibility measure is pointwise Choquet-risk optimal for the
invariant concentration penalty above, among valid $G$-equivariant contours with finite
Haar-sized $\alpha$-cuts. In particular, $\pi^H$ is Haar-concentration-optimal.

\end{example}

Outside strict group-invariant settings, the same idea still points toward a natural
replacement for Lebesgue size: one may measure set size using an intrinsic volume element
rather than arbitrary coordinate volume. In regular parametric models, the canonical choice
is the Jeffreys/Fisher volume element, i.e.\ the Riemannian volume induced by Fisher
information. Its key feature is coordinate invariance: under reparametrisation, the Jacobian
factors are exactly offset by the transformation of the Fisher information. This leads
naturally to the broader class of intrinsic penalties considered next.

\subsection{Intrinsic divergence penalties} 

A second route to reparametrisation-robust efficiency is to define the accuracy loss directly on the statistical model, rather than on a chosen coordinate system. Let \(D(P_\theta,P_\vartheta)\) be a nonnegative divergence on the sampling laws, and let \(\psi:[0,\infty)\to[0,\infty)\) be continuous and nondecreasing, with \(\psi(0)=0\). We then consider penalties of the form
\[
\Gamma_\theta(\vartheta):=\psi\!\left(D(P_\theta,P_\vartheta)\right).
\] Given such a penalty, the associated ``intrinsic Choquet loss'' is obtained by the upper expectation as before: \[L(\theta, \Pi_x)=\int_0^1\sup_{\vartheta \in A_\alpha(x)}\psi(D(P_\theta, P_\vartheta))d\alpha.\]

Because \(\Gamma_\theta(\vartheta)\) depends only on the pair of sampling distributions \((P_\theta,P_\vartheta)\), it is invariant under any bijective reparametrisation of the parameter. A change of coordinates relabels \(\theta\), but does not alter the underlying family \(\{P_\theta\}\). In this sense, the penalty is intrinsic to the statistical problem rather than to an arbitrary parametrisation.

This choice is also natural from the standpoint of likelihood-based IMs. Likelihood ratios rank candidate parameter values according to how distinguishable the corresponding sampling laws are from the truth. A divergence-based loss evaluates the same phenomenon on the decision side: it penalises distributional discrepancy between \(P_\theta\) and \(P_\vartheta\), rather than coordinate discrepancy between \(\theta\) and \(\vartheta\).

Assume now that \(D\) is smooth enough that, as \(\delta\to 0\),
\[
D(P_{\theta+\delta},P_\theta)
=
c\,\delta^\top I(\theta)\delta + o(\|\delta\|^2),
\]
for some constant \(c>0\), where \(I(\theta)\) is the Fisher information. Thus \(D\) induces, to second order, the Fisher--Rao metric. No global symmetry of \(D\) is needed here; only its local quadratic part matters.

Fix \(\theta_0\), and introduce the usual Fisher-normalised local parameter
\[
\theta_{n,u}
=
\theta_0 + n^{-1/2} I(\theta_0)^{-1/2}u,
\qquad u\in\mathbb{R}^d.
\]
The corresponding local penalty is
\[
\ell_n(u,v)
:=
\psi\!\left(n\,D\!\left(P_{\theta_{n,u}},P_{\theta_{n,v}}\right)\right).
\]
Whenever the local expansion above is uniform on compact subsets of \(\mathbb{R}^d\times\mathbb{R}^d\), we obtain
\[
\ell_n(u,v)\to h_\infty(\|u-v\|)
\qquad\text{uniformly on compacts,}
\]
where
\[
h_\infty(r):=\psi(c r^2).
\]
Thus, after Fisher normalization, a divergence-based intrinsic penalty becomes asymptotically radial in the local parameter.

This observation identifies the relevant limit decision problem: once the loss is localized, the Gaussian shift experiment becomes the natural benchmark, and the exact Gaussian-shift Choquet-risk result from the equivariance theory is the canonical target.

\begin{remark}
A practical issue with divergence-based penalties is the integrability of the
resulting Choquet loss. Kullback--Leibler divergence is locally natural, since
its second-order expansion recovers the Fisher--Rao quadratic form, but it can
behave poorly globally: in low-plausibility regions, or near the boundary of
the model, the supremum over an $\alpha$-cut may be infinite even when the local
geometry is well behaved. To avoid this phenomenon a particularly natural choice is squared
Hellinger distance,
\[
h^2(P_\theta,P_\vartheta)
=
1-\int \sqrt{dP_\theta\,dP_\vartheta},
\]
which lies in $[0,1]$ and therefore guarantees existence of the Choquet loss
for every valid contour. It is also conceptually well aligned with the present
goal: the penalty measures how distinguishable the sampling law $P_\vartheta$
is from the true law $P_\theta$, rather than how far apart two parameter labels
are in an arbitrary coordinate system. Moreover, in regular dominated models,
squared Hellinger distance has the same local Fisher--Rao quadratic expansion
as Kullback--Leibler divergence, up to a constant, while avoiding the latter's
global explosion.
\end{remark}

\subsection{Asymptotic outlook under Fisher--Rao local loss}

The Gaussian-shift result from Section \ref{sec:Gaussball} gives an exact solution in the normal translation experiment. The possibilistic Bernstein--von Mises theorem further identifies this contour as the asymptotic limit of the validified LR contour. Ideally, we would want to show that Choquet-risk statements converge similarly to this natural benchmark in the limit experiment. We briefly sketch the outlook for showing such a result.

Suppose that a sequence of valid IM contours satisfies a possibilistic Bernstein--von Mises theorem after centering and Fisher normalization, in the sense discussed by \citet{Martin2025BvM}. Write \(\check\pi_n\) for the resulting localized contour, suppressing its data dependence in the notation, and let
\[
A^{(n)}_\alpha:=\{v:\check\pi_n(v)\ge \alpha\}
\]
denote its \(\alpha\)-cut. Combined with the local divergence expansion from the previous subsection, this suggests the following asymptotic picture: with
\[
\ell_n(u,v)
=
\psi\!\left(n\,D\!\left(P_{\theta_{n,u}},P_{\theta_{n,v}}\right)\right),
\]
one expects the local Choquet risk of the finite-sample IM to converge to the exact Gaussian benchmark. More precisely, with
\[
R_n^{\mathrm{loc}}(u,\check\pi_n)
:=
E_{\theta_{n,u}}
\left[
\int_0^1
\sup_{v\in A^{(n)}_\alpha}
\ell_n(u,v)\,d\alpha
\right]
\]
and
\[
R_\infty(u,\pi^\star)
:=
E_u
\left[
\int_0^1
\sup_{v\in A^\star_\alpha(Z)}
h_\infty(\|u-v\|)\,d\alpha
\right],
\]
the natural expected limit statement is
\[
\sup_{\|u\|\le M}
\left|
R_n^{\mathrm{loc}}(u,\check\pi_n)-R_\infty(u,\pi^\star)
\right|
\to 0
\qquad\text{for each }M<\infty.
\]
In words, the localized Choquet risk should converge to the exact Gaussian-shift risk singled out by the equivariant theory.

The possibilistic Bernstein--von Mises theorem gives contour-level Gaussian approximation, locally on compact sets and in probability. To upgrade this statement to convergence of Choquet risks, one must additionally prove convergence of the induced $\alpha$-cuts away from problematic levels, control the tails coming both from the random limiting center and from the low-$\alpha$ regime, and establish sufficient uniform integrability to interchange the contour limit with the $\alpha$-integral and the sampling expectation. After this upper-bound step, a full local asymptotic minimax theorem would still require a matching lower bound: via LAN/Le Cam comparison, every finite-sample valid contour sequence should induce an asymptotically valid contour in the Gaussian shift experiment, so that the exact Gaussian minimax bound applies. Thus, at present, the Fisher--Rao/divergence connection should be read as an asymptotic roadmap rather than as a completed minimax theorem.

\section{Conclusion}\label{sec:conclusion}

This paper has proposed Choquet risk as a finite-sample efficiency criterion for valid possibilistic inferential models. The starting point was that validity alone is not enough to choose among calibrated possibility contours: distinct valid IMs can differ substantially in informativeness, while pointwise dominance is too strong to provide a general ordering. By evaluating the data-dependent possibility measure itself through Choquet, or upper, expectation, the proposed framework gives a procedure-level risk criterion that respects the imprecision responsible for exact finite-sample calibration.

The main technical simplification is the $\alpha$-cut representation. Choquet loss decomposes into an integral, over plausibility levels, of the worst penalty over the corresponding calibrated confidence set; Choquet risk is the sampling expectation of this loss. This representation makes the framework both interpretable and operational. For concentration penalties, it reduces to an integrated expected-size criterion, so classical confidence-set optimality results can be lifted directly to possibility contours. More generally, the credal-set interpretation shows that Choquet loss is a least-favorable confidence loss, and identifies the sense in which the present construction extends confidence risk from additive confidence distributions to non-additive, calibrated IM output.

The resulting theory gives two complementary routes to finite-sample optimality. The first is through unbiasedness: under validity, the possibilistic unbiasedness condition introduced here is equivalent to the usual unbiasedness of the induced confidence sets and tests, so UMPU and most-accurate-unbiased theory can be translated into concentration-optimal valid contours. The second is through equivariance: equivariant contours are equivalent to equivariant $\alpha$-cuts, a Hunt--Stein-type reduction justifies minimax comparisons within an equivariant subclass, and the Gaussian location experiment yields an exact radial Choquet-risk minimax result for the standard Gaussian possibility contour. These results show that Choquet risk does more than reproduce familiar interval-length comparisons; it supplies a decision-theoretic language in which classical optimality, imprecise uncertainty, and finite-sample validity can be treated in a common framework.

The framework also clarifies what kind of efficiency statements should not be expected. Choquet risk is risk relative to a penalty, so there is no coordinate-free or universally best contour without first specifying what kind of inferential error or concentration is being penalised. Reparameterization changes the decision problem unless the penalty is transported along with the parameterization. This motivates invariant size criteria and intrinsic divergence losses, which avoid tying the comparison to arbitrary coordinates. In regular models, divergence losses connect locally to Fisher--Rao quadratic loss and hence to the Gaussian benchmark suggested by the possibilistic Bernstein--von Mises theorem. In the present paper this connection is deliberately used only as an asymptotic guide. A full local asymptotic minimax theorem for valid possibilistic IMs under intrinsic Choquet-risk losses remains open. More precisely, one would like to show that, in regular experiments and under suitable localisation, the finite-sample Choquet-risk comparison converges to the Gaussian limit experiment in a way that transfers the Gaussian minimax result to a local asymptotic minimax statement. Establishing such a theorem would turn the roadmap developed here into a complete asymptotic efficiency theory, and is a natural target for future work.

Finally, Choquet risk provides a practical basis for direct comparisons when analytic optimality arguments are unavailable. The simulation comparisons illustrate how procedure-level risk can quantify differences that are otherwise expressed only through selected contour plots or fixed-level confidence-region sizes. In particular, the extension-versus-profile marginalization examples show how competing valid possibilistic procedures can be compared on a common loss scale, and how relative risk can make the efficiency cost of one construction visible. These examples also point to a practical limitation: estimating Choquet risk can be computationally demanding, especially when $\alpha$-cut optimization must be repeated over nuisance-dependent risk surfaces.

Taken together, the results support the central message that valid possibility measures can be assessed by the same kind of decision-theoretic discipline used to compare ordinary statistical procedures, provided the loss is adapted to the non-additive nature of the inferential output. Choquet risk supplies such an adaptation. It preserves the nested confidence-set structure built into valid IMs, connects finite-sample comparisons to classical optimality theory, and provides a bridge from exact calibration to the asymptotic Gaussian efficiency picture. The remaining challenges are to sharpen this bridge through local asymptotic minimax theory, to develop more efficient risk-estimation methods, and to investigate intrinsic penalties for broader model classes.

\section*{Acknowledgements}
I am grateful to Ryan Martin for helpful discussions and feedback on earlier drafts. I also thank Rolf Larsson for comments and suggestions that improved the presentation. Any remaining errors are my own.

\section*{Declaration of generative AI use}
During the preparation of this manuscript, the author has used ChatGPT for language editing, LaTeX troubleshooting, feedback on exposition and organization, literature search, and ideas testing and code implementation for examples and simulation. The author notes in particular that the proof device via Minkowski symmetrization in \ref{prop:gaussian-shift-balls}--originally from PDE-theory(for more, see e.g. \citet{Daners2011Krahn})--was suggested by ChatGPT. The author reviewed and edited all AI-assisted output, and takes full responsibility for the content of the manuscript.

\bibliographystyle{plainnat}
\bibliography{sample}

@book{schweder_hjort_2016, place={Cambridge}, series={Cambridge Series in Statistical and Probabilistic Mathematics}, title={Confidence, Likelihood, Probability: Statistical Inference with Confidence Distributions}, DOI={10.1017/CBO9781139046671}, publisher={Cambridge University Press}, author={Schweder, Tore and Hjort, Nils Lid}, year={2016}, collection={Cambridge Series in Statistical and Probabilistic Mathematics}}

@article{BalchMartinFerson2019FCT,
  author    = {Balch, Michael S. and Martin, Ryan and Ferson, Scott},
  title     = {Satellite conjunction analysis and the false confidence theorem},
  journal   = {Proceedings of the Royal Society A: Mathematical, Physical and Engineering Sciences},
  year      = {2019},
  volume    = {475},
  number    = {2227},
  pages     = {20180565},
  doi       = {10.1098/rspa.2018.0565},
  url       = {https://doi.org/10.1098/rspa.2018.0565}
}

@book{lehmann2005testing,
  title     = {Testing Statistical Hypotheses},
  author    = {Lehmann, Erich L. and Romano, Joseph P.},
  edition   = {3rd},
  year      = {2005},
  publisher = {Springer},
  address   = {New York},
  isbn      = {978-0387269373}
}

@book{Eaton1989,
  author    = {Eaton, Morris L.},
  title     = {Group Invariance Applications in Statistics},
  year      = {1989},
  publisher = {Institute of Mathematical Statistics},
  series    = {NSF-CBMS Regional Conference Series in Probability and Statistics},
  volume    = {1},
  address   = {Hayward, CA}
}

@article{efron2010future,
  title={The future of indirect evidence},
  author={Efron, Bradley},
  journal={Statistical science: a review journal of the Institute of Mathematical Statistics},
  volume={25},
  number={2},
  pages={145},
  year={2010}
}

@article{couso2001necessity,
  title={The necessity of the strong $\alpha$-cuts of a fuzzy set},
  author={Couso, In{\'e}s and Montes, Susana and Gil, Pedro},
  journal={International Journal of Uncertainty, Fuzziness and Knowledge-Based Systems},
  volume={9},
  number={02},
  pages={249--262},
  year={2001},
  publisher={World Scientific}
}

@article{demyst,
author = {Yifan Cui and Jan Hannig},
title = {{Demystifying Inferential Models and Confidence Curves: A Fiducial Perspective}},
volume = {40},
journal = {Statistical Science},
number = {2},
publisher = {Institute of Mathematical Statistics},
pages = {211 -- 218},
year = {2025},
doi = {10.1214/24-STS924},
URL = {https://doi.org/10.1214/24-STS924}
}

@book{dubois_prade_1988,
  author    = {Dubois, Didier and Prade, Henri},
  title     = {Possibility Theory: An Approach to Computerized Processing of Uncertainty},
  publisher = {Plenum Press},
  address   = {New York},
  year      = {1988},
  isbn      = {978-0306425202}
}

@article{dubois2004probability,
  title={Probability-possibility transformations, triangular fuzzy sets, and probabilistic inequalities},
  author={Dubois, Didier and Foulloy, Laurent and Mauris, Gilles and Prade, Henri},
  journal={Reliable computing},
  volume={10},
  number={4},
  pages={273--297},
  year={2004},
  publisher={Springer}
}

@article{Dubois2024likelihood,
title = {Reasoning and learning in the setting of possibility theory - Overview and perspectives},
journal = {International Journal of Approximate Reasoning},
volume = {171},
pages = {109028},
year = {2024},
note = {Synergies between Machine Learning and Reasoning},
issn = {0888-613X},
doi = {https://doi.org/10.1016/j.ijar.2023.109028},
url = {https://www.sciencedirect.com/science/article/pii/S0888613X23001597},
author = {Didier Dubois and Henri Prade}
}

@book{BergerObjective,
author = {Berger, James O and Bernardo, Jose M and Sun, Dongchu},
title = {Objective Bayesian Inference},
publisher = {WORLD SCIENTIFIC},
year = {2024},
doi = {10.1142/13640},
address = {},
edition   = {},
URL = {https://www.worldscientific.com/doi/abs/10.1142/13640},
eprint = {https://www.worldscientific.com/doi/pdf/10.1142/13640}
}

@article{Erhard-orig,
 ISSN = {00255521, 19031807},
 URL = {http://www.jstor.org/stable/24491511},
 author = {ANTOINE EHRHARD},
 journal = {Mathematica Scandinavica},
 number = {2},
 pages = {281--301},
 publisher = {Mathematica Scandinavica},
 title = {SYMÉTRISATION DANS L'ESPACE DE GAUSS},
 urldate = {2026-04-20},
 volume = {53},
 year = {1983}
}

@article{BORELL2003663,
title = {The Ehrhard inequality},
journal = {Comptes Rendus Mathematique},
volume = {337},
number = {10},
pages = {663-666},
year = {2003},
issn = {1631-073X},
doi = {https://doi.org/10.1016/j.crma.2003.09.031},
url = {https://www.sciencedirect.com/science/article/pii/S1631073X03004461},
author = {Christer Borell}
}

@article{Jeffreys46,
 ISSN = {00804630},
 URL = {http://www.jstor.org/stable/97883},
 author = {Harold Jeffreys},
 journal = {Proceedings of the Royal Society of London. Series A, Mathematical and Physical Sciences},
 number = {1007},
 pages = {453--461},
 publisher = {The Royal Society},
 title = {An Invariant Form for the Prior Probability in Estimation Problems},
 urldate = {2026-06-29},
 volume = {186},
 year = {1946}
}

@article{Martin2013,
   title={Inferential Models: A Framework for Prior-Free Posterior Probabilistic Inference},
   volume={108},
   ISSN={1537-274X},
   url={http://dx.doi.org/10.1080/01621459.2012.747960},
   DOI={10.1080/01621459.2012.747960},
   number={501},
   journal={Journal of the American Statistical Association},
   publisher={Informa UK Limited},
   author={Martin, Ryan and Liu, Chuanhai},
   year={2013},
   month=mar, pages={301–313} }

@misc{Martin2023b,
      title={Valid and efficient imprecise-probabilistic inference with partial priors, II. General framework}, 
      author={Ryan Martin},
      year={2023},
      eprint={2211.14567},
      archivePrefix={arXiv},
      primaryClass={stat.ME},
      url={https://arxiv.org/abs/2211.14567}, 
}

@article{Martin2025,
author = {Ryan Martin},
title = {Possibilistic Inferential Models: A Review},
journal = {Journal of the American Statistical Association},
volume = {121},
number = {553},
pages = {807--826},
year = {2026},
publisher = {Taylor \& Francis},
doi = {10.1080/01621459.2025.2606127},


URL = { 
    
        https://doi.org/10.1080/01621459.2025.2606127
    
    

},
eprint = { 
    
        https://doi.org/10.1080/01621459.2025.2606127
    
    

}

}

@article{MartinIMMC,
author = {Ryan Martin},
title = {An efficient Monte Carlo method for valid prior-free possibilistic statistical inference},
journal = {Journal of the American Statistical Association},
volume = {0},
number = {ja},
pages = {1--25},
year = {2026},
publisher = {Taylor \& Francis},
doi = {10.1080/01621459.2026.2671450},


URL = { 
    
        https://doi.org/10.1080/01621459.2026.2671450
    
    

},
eprint = { 
    
        https://doi.org/10.1080/01621459.2026.2671450
    
    

}

}

@misc{reIM,
      title={No-prior Bayesian inference reIMagined: probabilistic approximations of inferential models}, 
      author={Ryan Martin},
      year={2025},
      eprint={2503.19748},
      archivePrefix={arXiv},
      primaryClass={stat.ME},
      url={https://arxiv.org/abs/2503.19748}, 
}

@article{Martin2025BvM,
title = {Asymptotic efficiency of inferential models and a possibilistic Bernstein–von Mises theorem},
journal = {International Journal of Approximate Reasoning},
volume = {180},
pages = {109389},
year = {2025},
issn = {0888-613X},
doi = {https://doi.org/10.1016/j.ijar.2025.109389},
url = {https://www.sciencedirect.com/science/article/pii/S0888613X25000301},
author = {Ryan Martin and Jonathan P. Williams}
}

@misc{Martin2026decision,
      title={Decision-making with possibilistic inferential models}, 
      author={Ryan Martin and Shih-Ni Prim and Jonathan Williams},
      year={2026},
      eprint={2112.13247},
      archivePrefix={arXiv},
      primaryClass={math.ST},
      url={https://arxiv.org/abs/2112.13247}, 
}

@article{MinkowskiSymm,
author = {D. Coupier and Yu. Davydov},
title = {{Random symmetrizations of convex bodies}},
volume = {46},
journal = {Advances in Applied Probability},
number = {3},
publisher = {Applied Probability Trust},
pages = {603 -- 621},
year = {2014},
doi = {10.1239/aap/1409319551},
URL = {https://doi.org/10.1239/aap/1409319551}
}

@book{TroffaesDeCooman2014,
  author    = {Troffaes, Matthias C. M. and de Cooman, Gert},
  title     = {Lower Previsions},
  year      = {2014},
  publisher = {John Wiley \& Sons},
  series    = {Wiley Series in Probability and Statistics},
  isbn      = {978-0-470-72377-7},
  doi       = {10.1002/9781118762622},
  url       = {https://onlinelibrary.wiley.com/doi/book/10.1002/9781118762622}
}

@article{Zadeh1978Possibility,
  author  = {Zadeh, Lotfi A.},
  title   = {Fuzzy sets as a basis for a theory of possibility},
  journal = {Fuzzy Sets and Systems},
  year    = {1978},
  volume  = {1},
  number  = {1},
  pages   = {3--28},
  doi     = {10.1016/0165-0114(78)90029-5}
}

@article{DUBOIS1990consapprox,
title = {Consonant approximations of belief functions},
journal = {International Journal of Approximate Reasoning},
volume = {4},
number = {5},
pages = {419-449},
year = {1990},
issn = {0888-613X},
doi = {https://doi.org/10.1016/0888-613X(90)90015-T},
url = {https://www.sciencedirect.com/science/article/pii/0888613X9090015T},
author = {Didier Dubois and Henri Prade}
}

@article{Dubois2006PossibilityStats,
  author  = {Dubois, Didier},
  title   = {Possibility theory and statistical reasoning},
  journal = {Computational Statistics \& Data Analysis},
  year    = {2006},
  volume  = {51},
  number  = {1},
  pages   = {47--69},
  doi     = {10.1016/j.csda.2006.04.015}
}

@unpublished{Schweder2018Unbiased,
  author = {Schweder, Tore},
  title = {Unbiased Confidence},
  note = {Presentation at the FocuStat Conference 2018, University of Oslo},
  year = {2018},
  url = {https://www.mn.uio.no/math/english/research/projects/focustat/workshops%20and%20conference/FocuStat%20Conference%202018%3A%20V%C3%A5rens%20Vakreste%20Variabler/b1_schweder_unbiasedconfidence.pdf}
}

@article{Pratt01091961,
author = {John W. Pratt},
title = {Length of Confidence Intervals},
journal = {Journal of the American Statistical Association},
volume = {56},
number = {295},
pages = {549--567},
year = {1961},
publisher = {Taylor \& Francis},
doi = {10.1080/01621459.1961.10480644},


URL = { 
    
    
        https://www.tandfonline.com/doi/abs/10.1080/01621459.1961.10480644
    

},
eprint = { 
    
    
        https://www.tandfonline.com/doi/pdf/10.1080/01621459.1961.10480644
    

}

}

@book{shafer2020mathematical,
 ISBN = {9780691100425},
 URL = {http://www.jstor.org/stable/j.ctv10vm1qb},
 author = {Glenn Shafer},
 publisher = {Princeton University Press},
 title = {A Mathematical Theory of Evidence},
 urldate = {2026-05-19},
 year = {1976}
}

@article{Daners2011Krahn,
  author  = {Daners, Daniel},
  title   = {Krahn's proof of the Rayleigh conjecture revisited},
  journal = {Archiv der Mathematik},
  volume  = {96},
  number  = {2},
  year    = {2011},
  pages   = {187--199},
  doi     = {10.1007/s00013-010-0218-x}
}

@article{Hannig2016,
author = {Jan Hannig and Hari Iyer and Randy C. S. Lai and Thomas C. M. Lee and},
title = {Generalized Fiducial Inference: A Review and New Results},
journal = {Journal of the American Statistical Association},
volume = {111},
number = {515},
pages = {1346--1361},
year = {2016},
publisher = {ASA Website},
doi = {10.1080/01621459.2016.1165102},
URL = {https://doi.org/10.1080/01621459.2016.1165102},
eprint = {https://doi.org/10.1080/01621459.2016.1165102}
}

@article{xie2013,
author = {Xie, Min-ge and Singh, Kesar},
title = {Confidence Distribution, the Frequentist Distribution Estimator of a Parameter: A Review},
journal = {International Statistical Review},
volume = {81},
number = {1},
pages = {3-39},
doi = {https://doi.org/10.1111/insr.12000},
url = {https://onlinelibrary.wiley.com/doi/abs/10.1111/insr.12000},
eprint = {https://onlinelibrary.wiley.com/doi/pdf/10.1111/insr.12000},
year = {2013}
}

\appendix
\section{Appendix}
\begin{lemma}[Hunt-Stein symmetrization for valid contours, compact case.]\label{lem:A1}

Let a group $G$ act measurably on the sample space $\mathcal X$ and parameter
space $\Theta$. Assume the model $\{P_\theta:\theta\in\Theta\}$ is $G$-invariant,
and the penalty is $G$-invariant in the sense that
\[
\Gamma_\theta(\vartheta)=h\bigl(\rho(\vartheta,\theta)\bigr),\qquad
\rho(g\cdot\vartheta,g\cdot\theta)=\rho(\vartheta,\theta)
\]
for some nondecreasing $h$ and $\rho: \Theta \times \Theta \rightarrow[0, \infty)$ a measurable, $G$-invariant distance-like function.
Let $\pi$ be a valid possibility contour with $\alpha$-cuts
$A_\alpha(x)=\{\vartheta:\pi_x(\vartheta)\ge\alpha\}$.

For each probability measure $\nu$ on $G$, define the \emph{randomized
symmetrized} contour by
\[
\pi^{(\nu)}_X(\vartheta):=\pi_{G\cdot X}(G\cdot\vartheta),
\qquad G\sim\nu\ \text{independent of }X.
\]
Let $r_\alpha(\theta,\pi)=E_\theta\bigl[\sup_{\vartheta\in A_\alpha(X)}
\Gamma_\theta(\vartheta)\bigr]$ and
$R(\theta,\pi)=\int_0^1 r_\alpha(\theta,\pi)\,d\alpha$ as in § \ref{sec:optRisk}.
Then for any $\nu$:

\begin{enumerate}
\item[(i)] $\pi^{(\nu)}$ is valid: for all $\theta,\alpha$,
\[
P_\theta\{\pi^{(\nu)}_X(\theta)\le\alpha\}\le\alpha.
\]
\item[(ii)] If $G$ is compact and $\nu$ is its normalized left Haar measure
$\mu$, then $\pi^{(\mu)}$ is $G$-equivariant in distribution:
\[
\pi^{(\mu)}_{g\cdot X}(g\cdot\theta)\stackrel{d}{=}\pi^{(\mu)}_X(\theta)
\quad\text{for all }g\in G,\theta\in\Theta.
\]
\item[(iii)] For every $\alpha$ and $\theta$ we have the orbit–average identity
\[
r_\alpha(\theta,\pi^{(\mu)})=\int_G r_\alpha(g\cdot\theta,\pi)\,\mu(dg),
\]
so in particular
\[
\sup_\theta r_\alpha(\theta,\pi^{(\mu)})\le
\sup_\theta r_\alpha(\theta,\pi)
\quad\text{and}\quad
\sup_\theta R(\theta,\pi^{(\mu)})\le\sup_\theta R(\theta,\pi).
\]
\end{enumerate}

\end{lemma}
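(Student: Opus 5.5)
The whole argument rests on one identity: the $\alpha$-cut of the symmetrized contour is a transported cut of the original,
\[
A^{(\nu)}_\alpha(X)=\{\vartheta:\pi_{G X}(G\vartheta)\ge\alpha\}=G^{-1}A_\alpha(GX),
\]
which follows from the same bijectivity bookkeeping as in Lemma~\ref{lem: equiv**2}. I will combine this with the model invariance $X\sim P_\theta\Rightarrow gX\sim P_{g\theta}$ and the invariance of $\Gamma$, conditioning on $G=g$ and then integrating over $\nu$. I will take $\pi^{(\nu)}$ to be a jointly measurable function of $(X,G)$; this is where measurability of the action is used. I will also interpret validity in the form \eqref{eq:validity-point} with the strict inequality. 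The ``$\le$'' version displayed in (i) holds directly in continuous models; otherwise I will read it up to the paper's stated convention.

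For (i), fix $\theta,\alpha$ and condition on $G=g$. Then $\pi^{(\nu)}_X(\theta)=\pi_{gX}(g\theta)$. Since $gX\sim P_{g\theta}$ under $P_\theta$, this has the $P_{g\theta}$-law of $\pi_Y(g\theta)$ with $Y\sim P_{g\theta}$. Validity of $\pi$ at the parameter $g\theta$ gives $P\{\pi^{(\nu)}_X(\theta)<\alpha\mid G=g\}\le\alpha$, and integrating over $\nu$ by Fubini finishes (i).

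For (ii), take $\nu=\mu$ Haar with $G$ compact. I write $\pi^{(\mu)}_{hX}(h\theta)=\pi_{GhX}(Gh\theta)$. Right invariance of Haar measure on a compact group (it is unimodular) gives $Gh\stackrel{d}{=}G$, independent of $X$. Hence the pair $(GhX,Gh\theta)$ has the same joint law structure as $(G'X,G'\theta)$, and the two random variables agree in distribution. The hypothesis that the group is compact is used exactly here, to get two-sided invariance of $\mu$.

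For (iii), condition on $G=g$ and use the cut identity:
\[
\sup_{\vartheta\in A^{(\mu)}_\alpha(X)}\Gamma_\theta(\vartheta)
=\sup_{\vartheta'\in A_\alpha(gX)}h\bigl(\rho(g^{-1}\vartheta',\theta)\bigr)
=\sup_{\vartheta'\in A_\alpha(gX)}\Gamma_{g\theta}(\vartheta'),
\]
using $G$-invariance of $\rho$. Taking $E_\theta$ and using $gX\sim P_{g\theta}$ gives $r_\alpha(g\theta,\pi)$. Tonelli (everything is nonnegative) then yields $r_\alpha(\theta,\pi^{(\mu)})=\int_G r_\alpha(g\theta,\pi)\,\mu(dg)$. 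Bounding the integrand by $\sup_\theta r_\alpha(\theta,\pi)$ gives the first sup inequality. For $R$ I will not integrate the levelwise sup bounds, since that would only give $\int_0^1\sup_\theta r_\alpha\,d\alpha$, which is larger. Instead I will apply Tonelli once more in $\alpha$ to get $R(\theta,\pi^{(\mu)})=\int_G R(g\theta,\pi)\,\mu(dg)\le\sup_\theta R(\theta,\pi)$.

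I expect the main obstacle to be measurability rather than the algebra. The map $(x,g,\vartheta)\mapsto\pi_{gx}(g\vartheta)$ must be jointly measurable, and the supremum over a random cut must be a measurable function of $(X,G)$. Only then can conditioning on $G=g$ and Tonelli be justified. I will handle this by assuming measurable actions and, if needed, taking the sup as a measurable (or outer) quantity. Since the cuts may be uncountable, this is a universal-measurability caveat rather than a substantive difficulty.
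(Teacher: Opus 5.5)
Your proposal is correct and follows the paper's proof essentially step for step. It uses conditioning on $G=g$ with model invariance for (i), right-invariance of the (bi-invariant) Haar measure for (ii), and the cut identity $A^{(\mu)}_\alpha(X)=G^{-1}A_\alpha(GX)$ together with invariance of $\rho$ for (iii). Two small points. Applying Tonelli in $\alpha$ to get $R(\theta,\pi^{(\mu)})=\int_G R(g\theta,\pi)\,\mu(dg)$ before taking suprema is the right reading of the paper's terse ``taking $\sup_\theta$ and integrating over $\alpha$''. Your strict-versus-weak hedge in (i) is unnecessary, because over all $\alpha$ the two forms of validity are equivalent: $P_\theta\{\pi_X(\theta)\le t\}=\lim_{\alpha\downarrow t}P_\theta\{\pi_X(\theta)<\alpha\}\le t$.
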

\begin{proof}
 (i) \emph{Validity.} For every fixed $\theta, \alpha$,
 \begin{equation*}
     P_\theta\left\{\pi_X^{(\nu)}(\theta)\leq \alpha \right\}=\int_GP_\theta\left\{\pi_{gX}^{(\nu)}(g\cdot\theta)\leq \alpha \right\}\nu(dg)=\int_G P_{g\theta}\left\{\pi_X^{(\nu)}(g\cdot\theta)\leq \alpha \right\} \leq \alpha,
 \end{equation*}
 hence the randomized symmetrized contour $\pi_X^{(\nu)}$ is \emph{strongly valid}.

 (ii) \emph{Equivariance in distribution (compact case).} With $\nu=\mu$ the Haar measure, for any $h \in G$, 
 \begin{equation*}
     \pi_{hX}^{(\mu)}(h\cdot\theta)= \pi_{G\cdot(hX)}\left(G\cdot(h\cdot\theta)\right)= \pi_{(G\cdot h)X}\left((G\cdot h)\cdot\theta\right).
 \end{equation*}
 Define $G':=Gh$. By right invariance of $\mu$ (on compact G $\mu$ is bi-invariant),
 \begin{equation*}
     G'\sim\mu, \qquad G' \perp X,
 \end{equation*}
 So $(G',X) \stackrel{d}{=} (G, X)$. Let $F(x, g):=\pi_{gx}(g \cdot \theta)$. Then 
 \begin{equation*}
     \pi_{hX}^{(\mu)}(h\cdot\theta)=F(X, G') \qquad \pi_{X}^{(\mu)}(\theta)=F(X, G), 
 \end{equation*}
 and hence $F(X,G')\stackrel{d}{=}F(X,G)$, that is
 \begin{equation*}
     \pi_{hX}^{(\mu)}(h\cdot\theta)\stackrel{d}{=} \pi_X^{(\mu)}(\theta).
 \end{equation*}

(iii) \emph{Orbit-average slice risk.} We have for each realization $(X,G)=(x,g)$ 
\begin{equation*}
    A^{(\mu)}_\alpha(x)=\left\{\vartheta: \pi_{gx}(g\cdot x) \geq \alpha \right\} = g^{-1}A_\alpha(g \cdot x).
\end{equation*} Hence $A^{(\mu)}_\alpha(X)=G^{-1}\cdot A_\alpha(G\cdot X)$ almost surely. Using the invariance of $\Gamma_\theta$ and the model we find:
\begin{align*}
    r_\alpha(\theta, \pi^{(\mu)})&=E_\theta \bigl[\sup_{\vartheta \in A^{(\mu)}_\alpha(X)} \Gamma_\theta(\vartheta) \bigr]= \int_G E_\theta \bigl[\sup_{\eta \in A_\alpha(g\cdot X)} \Gamma_\theta(g^{-1}\cdot\eta) \bigr]\mu(dg) \\
    &=\int_GE_{g\theta}\bigl[\sup_{\eta \in A_\alpha(X)} \Gamma_{g\theta}(\eta) \bigr]\mu(dg)=\int_Gr_\alpha(g\cdot \theta, \pi)\mu(dg).
\end{align*}
Taking $\sup_\theta$ and integrating over $\alpha$ then gives the inequality.
\end{proof}

\begin{remark}\label{rmk:amenable}
    For \textbf{amenable non-compact} $G$: For non‑compact amenable groups one replaces $\mu$ by a Følner sequence of probability measures that are asymptotically left‑ (or right‑) invariant, and defines the symmetrization consistently with that orientation. The validity and minimax arguments above then apply to each member of the sequence; the standard Hunt–Stein limiting argument (see \citet{Eaton1989} or \citet{lehmann2005testing}) yields an exactly invariant randomized procedure with the same minimax bound. There are some minor bookkeeping details about left or right invariance that need to be consistent throughout however. In the compact case we have treated above, we have been lax on that issue, since then any Haar is both left and right-invariant. Since section \ref{sec:equi} is written using left-invariance, it might be convenient to change the randomized symmetrization to using $G^{-1}$ instead of $G$, but all proofs will go through in a similar manner.

\end{remark}

\begin{lemma}\label{lem:A3}
Let $G$ act measurably on the sample space $\mathcal X$ and transitively on the parameter space
$\Theta$, and suppose the model $\{P_\theta:\theta\in\Theta\}$ is $G$-invariant. Let
$\hat\theta:\mathcal X\to\Theta$ be a measurable $G$-equivariant center, let
$M:\mathcal X\to\mathcal M$ be a measurable maximal $G$-invariant, and let
$\rho:\Theta\times\Theta\to[0,\infty)$ be a measurable $G$-invariant distance-like function such that
$\rho(\theta,\theta)=0$ for all $\theta\in\Theta$. Fix a reference point $\theta_0\in\Theta$, and let
\[
\mu_M := P_{\theta_0}\circ M^{-1}
\]
denote the law of $M(X)$ under $P_{\theta_0}$. Assume that under $P_{\theta_0}$ a regular conditional
distribution of $\rho(\theta_0,\hat\theta(X))$ given $M(X)$ exists, and write
\[
H(m,t)
:=
P_{\theta_0}\!\left\{
\rho(\theta_0,\hat\theta(X))\le t \,\middle|\, M(X)=m
\right\},
\qquad m\in\mathcal M,\ t\ge 0.
\]
Assume moreover that $m\mapsto H(m,t)$ is measurable for each fixed $t$, and that for
$\mu_M$-a.e.\ $m$, the map $t\mapsto H(m,t)$ is nondecreasing, right-continuous, and satisfies
$\lim_{t\to\infty}H(m,t)=1$.

Define the minimal feasible radius
\[
\tau_\alpha^*(m)
:=
\inf\{t\ge 0:H(m,t)\ge 1-\alpha\},
\qquad \alpha\in(0,1),
\]
and the associated $\rho$-ball rule
\[
A_\alpha^*(x)
:=
\{\vartheta\in\Theta:\rho(\vartheta,\hat\theta(x))\le \tau_\alpha^*(M(x))\}.
\]

Then:
\begin{enumerate}
\item[(1)] $m\mapsto \tau_\alpha^*(m)$ is measurable and finite $\mu_M$-a.s.

\item[(2)] $A_\alpha^*$ is $G$-equivariant and $M$-conditionally valid, i.e.
\[
A_\alpha^*(g\cdot x)=g\cdot A_\alpha^*(x)\quad \text{for all $g\in G$}
,\]
 and
\[
P_\theta\{\theta\in A_\alpha^*(X)\mid M(X)\}\ge 1-\alpha
\qquad \quad \text{for every $\theta\in\Theta$, $P_\theta$-a.s.}
\]
 In particular,
\[
P_\theta\{\theta\in A_\alpha^*(X)\}\ge 1-\alpha
\qquad \forall \theta\in\Theta.
\]

\item[(3)] For each $m$, $\alpha\mapsto \tau_\alpha^*(m)$ is nonincreasing. Hence
$\{A_\alpha^*\}_{\alpha\in(0,1)}$ is nested in $\alpha$.

\item[(4)] (Orbitwise minimality.) If $\tau:\mathcal M\to[0,\infty]$ is measurable and satisfies
\[
H(m,\tau(m))\ge 1-\alpha
\qquad \text{for }\mu_M\text{-a.e. }m,
\]
then
\[
\tau(m)\ge \tau_\alpha^*(m)
\qquad \text{for }\mu_M\text{-a.e. }m.
\]
Equivalently, among measurable $M$-dependent $\rho$-ball rules satisfying the conditional
coverage constraint, $A_\alpha^*$ has $\mu_M$-a.e.\ minimal radius.
\end{enumerate}
\end{lemma}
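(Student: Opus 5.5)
The plan is to prove the four items in order. The basic tool is the joint invariance of the model and of $\rho$, which turns any $\theta$-dependent statement into one at the reference point $\theta_0$. Transitivity is used exactly once, to write $\theta=g\cdot\theta_0$.

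For (1), measurability of $\tau_\alpha^*$ comes from right-continuity and monotonicity of $t\mapsto H(m,t)$. These give
\[
\{m:\tau_\alpha^*(m)\le t\}=\{m:H(m,t)\ge 1-\alpha\},
\]
and the right-hand set is measurable because $m\mapsto H(m,t)$ is measurable for each fixed $t$. Finiteness follows from $H(m,t)\to 1>1-\alpha$ for $\mu_M$-a.e.\ $m$. Item (3) is immediate from the definition: if $\alpha_1<\alpha_2$, then $\{t:H(m,t)\ge 1-\alpha_1\}\subseteq\{t:H(m,t)\ge 1-\alpha_2\}$, so $\tau^*_{\alpha_2}(m)\le\tau^*_{\alpha_1}(m)$, and nestedness of the balls $A_\alpha^*$ follows. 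Item (4) is also a direct consequence of the infimum definition: $H(m,\tau(m))\ge 1-\alpha$ places $\tau(m)$ in the set whose infimum defines $\tau^*_\alpha(m)$. The value $\tau(m)=\infty$ is handled by the convention $H(m,\infty)=1$.

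For the equivariance part of (2), I will use $M(g\cdot x)=M(x)$, $\hat\theta(g\cdot x)=g\cdot\hat\theta(x)$ and $\rho(g\vartheta,g\hat\theta)=\rho(\vartheta,\hat\theta)$. Together these give
\[
g^{-1}\vartheta\in A^*_\alpha(x)\iff \vartheta\in A^*_\alpha(g\cdot x),
\]
as in the bookkeeping of Lemma~\ref{lem: equiv**2}. For conditional validity, fix $\theta$ and use transitivity to write $\theta=g\cdot\theta_0$. If $X\sim P_{\theta_0}$, then $gX\sim P_\theta$, and
\[
\rho\bigl(\theta,\hat\theta(gX)\bigr)=\rho\bigl(\theta_0,\hat\theta(X)\bigr),\qquad M(gX)=M(X).
\]
So the joint law of $\bigl(\rho(\theta,\hat\theta(X)),M(X)\bigr)$ under $P_\theta$ equals that of $\bigl(\rho(\theta_0,\hat\theta(X)),M(X)\bigr)$ under $P_{\theta_0}$. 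In particular $M$ is ancillary with law $\mu_M$, and $H$ is also a version of the conditional distribution function under every $P_\theta$. Then
\[
P_\theta\{\theta\in A^*_\alpha(X)\mid M(X)=m\}=H\bigl(m,\tau^*_\alpha(m)\bigr)\ge 1-\alpha,
\]
where the inequality uses right-continuity, which ensures the infimum in $\tau^*_\alpha(m)$ is attained. The unconditional coverage follows by the tower property.

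The main obstacle is this transfer of the regular conditional distribution from $P_{\theta_0}$ to $P_\theta$. Equality of joint laws identifies conditional distributions only up to $\mu_M$-null sets, so the "$P_\theta$-a.s." qualifier must be tracked carefully. I will phrase the conditional statement as the identity
\[
E_\theta\bigl[1\{\theta\in A^*_\alpha(X)\}\,f(M(X))\bigr]=\int H\bigl(m,\tau^*_\alpha(m)\bigr)f(m)\,\mu_M(dm)
\]
for all bounded measurable $f$. This avoids any choice of version and makes the a.s.\ statement follow.
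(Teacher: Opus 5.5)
Your proposal is correct and follows essentially the same route as the paper. It uses the same transitivity/invariance reduction to $\theta_0$, attainment of the infimum via right-continuity, the tower property, and the direct infimum arguments for (3) and (4); your single disintegration identity under equal joint laws is a streamlined form of the paper's fixed-$t$ version identification plus monotone-class kernel step. One piece of bookkeeping the paper makes explicit and you leave implicit: it redefines $H\equiv 1$ on a measurable $\mu_M$-null set where monotonicity or right-continuity fails, so that your level-set identity $\{\tau_\alpha^*\le t\}=\{H(\cdot,t)\ge 1-\alpha\}$ holds for every $m$ and measurability of $\tau_\alpha^*$ is literal rather than only up to null sets.
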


\begin{proof}
Let $M_0 \subseteq M$ be a measurable set with $\mu_M(M_0)=1$ such that, for every
$m \in M_0$, the map $t \mapsto H(m,t)$ is nondecreasing, right-continuous, and satisfies
$\lim_{t\to\infty} H(m,t)=1$. Redefine $H$ on $M_0^c$ by
\[
H(m,t)=1, \qquad m\in M_0^c,\ t\ge 0,
\]
and also set $H(m,\infty)=1$ for all $m$. This changes $H$ (and hence $\tau_\alpha^*$)
only on a $\mu_M$-null set, so it does not affect any statement of the lemma. We continue
to write $H$ and $\tau_\alpha^*$ for these modified versions.

For (1), for any $q>0$,
\[
\{m : \tau_\alpha^*(m) < q\}
=
\bigcup_{\substack{r\in\mathbb Q\\0\le r<q}}
\{m : H(m,r)\ge 1-\alpha\}.
\]
Indeed, if $\tau_\alpha^*(m)<q$, then by definition of the infimum there exists $s<q$ such
that $H(m,s)\ge 1-\alpha$. Choosing $r\in\mathbb Q$ with $s\le r<q$ and using monotonicity
of $H(m,\cdot)$ gives $H(m,r)\ge 1-\alpha$. The reverse inclusion is immediate. Since
$m\mapsto H(m,r)$ is measurable for each fixed $r$, the right-hand side is measurable.
Hence $m\mapsto \tau_\alpha^*(m)$ is measurable.

Since $1-\alpha<1$ and $H(m,t)\to 1$ as $t\to\infty$, there exists a finite $t=t(m)$ such that
$H(m,t)\ge 1-\alpha$ for every $m$ under the modified version. Therefore $\tau_\alpha^*(m)<\infty$
for every $m$ under that version, and in particular $\tau_\alpha^*<\infty$ $\mu_M$-a.s.
in the original statement.

We shall also use that
\[
H(m,\tau_\alpha^*(m)) \ge 1-\alpha
\qquad \text{for all } m\in M.
\]
Fix $m$. By definition of the infimum, for each $n\ge 1$ there exists $s_n<\tau_\alpha^*(m)+n^{-1}$
such that $H(m,s_n)\ge 1-\alpha$. By monotonicity,
\[
H\bigl(m,\tau_\alpha^*(m)+n^{-1}\bigr)\ge 1-\alpha.
\]
Letting $n\to\infty$ and using right-continuity of $H(m,\cdot)$ gives the claim.

For (2), equivariance is immediate from the assumptions on $\hat\theta$, $M$, and $\rho$.
For any $g\in G$ and $\vartheta\in\Theta$,
\begin{align*}
\vartheta \in A_\alpha^*(g\cdot x)
&\iff \rho\bigl(\vartheta,\hat\theta(g\cdot x)\bigr)
    \le \tau_\alpha^*\bigl(M(g\cdot x)\bigr) \\
&\iff \rho\bigl(\vartheta,g\cdot \hat\theta(x)\bigr)
    \le \tau_\alpha^*\bigl(M(x)\bigr) \\
&\iff \rho\bigl(g^{-1}\cdot \vartheta,\hat\theta(x)\bigr)
    \le \tau_\alpha^*\bigl(M(x)\bigr) \\
&\iff g^{-1}\cdot \vartheta \in A_\alpha^*(x) \\
&\iff \vartheta \in g\cdot A_\alpha^*(x).
\end{align*}
Thus
\[
A_\alpha^*(g\cdot x)=g\cdot A_\alpha^*(x).
\]

To prove $M$-conditional validity, fix $\theta\in\Theta$ and choose $g\in G$ such that
$\theta=g\cdot\theta_0$. Set
\[
Y := g^{-1}\cdot X.
\]
Then under $P_\theta$, we have $Y\sim P_{\theta_0}$ by $G$-invariance of the model.
Moreover, by $G$-equivariance of $\hat\theta$ and $G$-invariance of $M$ and $\rho$,
\[
\rho\bigl(\theta,\hat\theta(X)\bigr)
=
\rho\bigl(g\cdot\theta_0,\hat\theta(X)\bigr)
=
\rho\bigl(\theta_0,\hat\theta(Y)\bigr),
\qquad
M(X)=M(Y),
\qquad
P_\theta\text{-a.s.}
\]
Hence, for every bounded measurable $\phi:M\to\mathbb R$ and every $t\ge 0$,
\begin{align*}
E_\theta\!\left[
  1\bigl\{\rho(\theta,\hat\theta(X))\le t\bigr\}\phi(M(X))
\right]
&=
E_\theta\!\left[
  1\bigl\{\rho(\theta_0,\hat\theta(Y))\le t\bigr\}\phi(M(Y))
\right] \\
&=
E_{\theta_0}\!\left[
  1\bigl\{\rho(\theta_0,\hat\theta(X))\le t\bigr\}\phi(M(X))
\right] \\
&=
E_{\theta_0}\!\left[H(M(X),t)\phi(M(X))\right],
\end{align*}
where the last step is the defining property of the regular conditional distribution. Also,
for every bounded measurable $\phi$,
\[
E_\theta[\phi(M(X))]
=
E_\theta[\phi(M(Y))]
=
E_{\theta_0}[\phi(M(X))].
\]
Thus the law of $M(X)$ under $P_\theta$ is $\mu_M$, and therefore
\[
E_{\theta_0}[H(M(X),t)\phi(M(X))]
=
E_\theta[H(M(X),t)\phi(M(X))].
\]
Combining the last two displays shows that, for every fixed $t\ge 0$,
\[
P_\theta\bigl\{\rho(\theta,\hat\theta(X))\le t \mid M(X)\bigr\}
=
H(M(X),t),
\qquad
P_\theta\text{-a.s.}
\]

Let
\[
Z_\theta(X):=\rho(\theta,\hat\theta(X)),
\]
and let $K(m,\cdot)$ be the probability kernel on $[0,\infty]$ whose distribution function
is $t\mapsto H(m,t)$, i.e.
\[
K(m,[0,t])=H(m,t), \qquad t\ge 0.
\]
Since $m\mapsto H(m,t)$ is measurable for each fixed $t$, $K$ is a measurable kernel. By
the previous display and the monotone class theorem, $K(M(X),\cdot)$ is a version of the
conditional distribution of $Z_\theta(X)$ given $M(X)$ under $P_\theta$. Hence, by a standard
property of regular conditional distributions, for every measurable $a:M\to[0,\infty]$,
\[
P_\theta\{Z_\theta(X)\le a(M(X)) \mid M(X)\}
=
K(M(X),[0,a(M(X))])
=
H(M(X),a(M(X)))
\qquad
P_\theta\text{-a.s.}
\]
Applying this with $a=\tau_\alpha^*$ gives
\begin{align*}
    P_\theta\{\theta\in A_\alpha^*(X)\mid M(X)\}
&=
P_\theta\{Z_\theta(X)\le \tau_\alpha^*(M(X))\mid M(X)\}\\
&=
H(M(X),\tau_\alpha^*(M(X)))
\ge 1-\alpha,
\qquad
P_\theta\text{-a.s.}
\end{align*}
where the last inequality follows from the right-continuity argument above. Taking
expectations and using the tower property gives
\[
P_\theta\{\theta\in A_\alpha^*(X)\}\ge 1-\alpha,
\qquad \theta\in\Theta.
\]

For (3), if $\alpha_1<\alpha_2$, then $1-\alpha_1>1-\alpha_2$, so
\[
\{t\ge 0 : H(m,t)\ge 1-\alpha_1\}
\subseteq
\{t\ge 0 : H(m,t)\ge 1-\alpha_2\}.
\]
Therefore
\[
\tau_{\alpha_1}^*(m)\ge \tau_{\alpha_2}^*(m),
\]
so $\alpha\mapsto \tau_\alpha^*(m)$ is nonincreasing for each $m$. Consequently, if
$\alpha_1<\alpha_2$, then
\[
A_{\alpha_2}^*(x)
=
\{\vartheta : \rho(\vartheta,\hat\theta(x))\le \tau_{\alpha_2}^*(M(x))\}
\subseteq
\{\vartheta : \rho(\vartheta,\hat\theta(x))\le \tau_{\alpha_1}^*(M(x))\}
=
A_{\alpha_1}^*(x),
\]
so $\{A_\alpha^*\}_{\alpha\in(0,1)}$ is nested in $\alpha$.

For (4), let $\tau:M\to[0,\infty]$ be measurable and suppose
\[
H(m,\tau(m))\ge 1-\alpha
\qquad \text{for }\mu_M\text{-a.e. } m,
\]
with the convention $H(m,\infty)=1$. If $\tau(m)<\infty$, then $\tau(m)$ belongs to the
feasible set
\[
\{t\ge 0 : H(m,t)\ge 1-\alpha\},
\]
hence
\[
\tau(m)\ge \inf\{t\ge 0 : H(m,t)\ge 1-\alpha\} = \tau_\alpha^*(m).
\]
If $\tau(m)=\infty$, the same inequality is trivial. Therefore
\[
\tau(m)\ge \tau_\alpha^*(m)
\qquad \text{for }\mu_M\text{-a.e. } m.
\]
This is exactly the claimed orbitwise minimality. The equivalent formulation for measurable
$M$-dependent $\rho$-ball rules follows from the same conditioning argument as above: a
rule with radius $\tau(M(X))$ is $M$-conditionally valid if and only if
$H(m,\tau(m))\ge 1-\alpha$ for $\mu_M$-a.e. $m$.
\end{proof}

\begin{proposition}
\label{prop:gaussian-shift-balls}
Consider the Gaussian location experiment
\[
Z \sim N_d(u,I_d), \qquad u\in\mathbb{R}^d,
\]
and fix $\alpha\in(0,1)$. Let $C_\alpha^{\mathrm{tr}}$ be the class of measurable
translation-equivariant $(1-\alpha)$-confidence sets $C_\alpha$, i.e.
\[
C_\alpha(z+t)=C_\alpha(z)+t, \qquad z,t\in\mathbb{R}^d,
\]
such that
\[
P_u\{u\in C_\alpha(Z)\}\ge 1-\alpha, \qquad u\in\mathbb{R}^d.
\]
Let $h:[0,\infty)\to[0,\infty)$ be continuous and nondecreasing, and assume
\[
h(r)\to\infty \quad (r\to\infty),
\qquad
E\,h(r+\|W\|)<\infty \quad \forall r<\infty,
\]
where $W\sim N_d(0,I_d)$. Define
\[
r_\alpha(u,C_\alpha):=
E_u\Big[\sup_{v\in C_\alpha(Z)} h(\|v-u\|)\Big].
\]
Let $F_d$ be the distribution function of $\chi_d^2$, and let
\[
\rho_\alpha^2:=F_d^{-1}(1-\alpha).
\]
Then the ball rule
\[
C_\alpha^\star(z):=\{v\in\mathbb{R}^d:\|v-z\|\le \rho_\alpha\}
\]
minimizes $r_\alpha(u,\cdot)$ over $C_\alpha^{\mathrm{tr}}$ for every
$u\in\mathbb{R}^d$. In particular,
\[
\inf_{C_\alpha\in C_\alpha^{\mathrm{tr}}} r_\alpha(u,C_\alpha)
=
r_\alpha(u,C_\alpha^\star),
\qquad
u\in\mathbb{R}^d,
\]
and the right-hand side does not depend on $u$.
\end{proposition}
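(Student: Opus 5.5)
The plan is to use equivariance to reduce everything to a single fixed set, and then to show that among all sets of Gaussian mass at least $1-\alpha$ the centred ball $\rho_\alpha$ minimizes a sup-type functional. The last step is the one I cannot yet close for general $h$.

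\textbf{Step 1: reduction to a fixed set.} Translation equivariance forces $C_\alpha(z)=z+B$ with $B:=C_\alpha(0)$ a fixed measurable set. Write $W=Z-u\sim N_d(0,I_d)$ and let $\gamma_d$ be the standard Gaussian measure. Then
\[
P_u\{u\in C_\alpha(Z)\}=P\{-W\in B\}=\gamma_d(B)\ge 1-\alpha .
\]
The risk becomes
\[
r_\alpha(u,C_\alpha)=E\,h\bigl(R_B(W)\bigr),\qquad R_B(w):=\sup_{b\in B}\|w+b\| ,
\]
which does not depend on $u$. This already gives the last claim of the proposition, once we know the ball is optimal.

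For the ball $B^\star=\{\|b\|\le\rho_\alpha\}$ we have $R_{B^\star}(w)=\|w\|+\rho_\alpha$. Its coverage is exactly $1-\alpha$ because $\|W\|^2\sim\chi^2_d$. Its risk is finite by the integrability assumption $E\,h(r+\|W\|)<\infty$. So the whole problem reduces to the inequality
\[
E\,h\bigl(R_B(W)\bigr)\ \ge\ E\,h\bigl(\|W\|+\rho_\alpha\bigr)\qquad\text{whenever }\gamma_d(B)\ge 1-\alpha .
\]

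\textbf{Step 2: preliminary reductions of $B$.} Replacing $B$ by its closed convex hull leaves $R_B$ unchanged, since $b\mapsto\|w+b\|$ is convex and continuous. It can only increase $\gamma_d(B)$. So I may take $B$ closed and convex. If $B$ is unbounded, then $R_B\equiv\infty$, and since $h(r)\to\infty$ the risk is infinite, so there is nothing to prove. Hence $B$ may be assumed to be a convex body.

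Next I would replace $B$ by its Minkowski symmetral $K=\tfrac12(B-B)$, following the device credited in the acknowledgements. By Prékopa--Leindler (log-concavity of $\gamma_d$) together with $\gamma_d(-B)=\gamma_d(B)$,
\[
\gamma_d(K)\ \ge\ \gamma_d(B)^{1/2}\gamma_d(-B)^{1/2}\ =\ \gamma_d(B),
\]
so $K$ still satisfies the coverage constraint. On the risk side, the triangle inequality gives $R_K(w)\le\tfrac12 R_B(w)+\tfrac12 R_B(-w)$. Using the symmetry $W\stackrel{d}{=}-W$, I would aim to show $E\,h(R_K(W))\le E\,h(R_B(W))$. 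This is immediate when $h$ is convex. For merely monotone $h$ I would instead try to compare the laws of $R_K(W)$ and $R_B(W)$ directly, via a coupling along rays.

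\textbf{Step 3: the symmetric case (main obstacle).} For centrally symmetric convex $K$ with $\gamma_d(K)\ge 1-\alpha$, write $R_K(w)\ge \|w\|+h_K(w/\|w\|)$, where $h_K$ is the support function. Polar decomposition makes $\|W\|$ and $W/\|W\|$ independent, with the direction uniform on the sphere.

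The difficulty is that $h_K(e)\ge\rho_\alpha$ fails pointwise: slab bounds only give $h_K(e)\ge z_{1-\alpha/2}$. So the comparison must use the full sup $R_K(w)$, not just its directional lower bound. I would first try a Steiner/Ehrhard-type symmetrization that preserves $\gamma_d(K)$ and decreases the distribution of $R_K(W)$ stochastically, iterating towards the ball. Ehrhard's inequality, cited in the bibliography, is the natural tool here.

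If that fails, my fallback is to prove the stochastic ordering
\[
P\{R_K(W)\le s\}\ \le\ P\{\|W\|+\rho_\alpha\le s\}\qquad\text{for all } s .
\]
The event $\{R_K(W)\le s\}$ equals $\{W\in K^{\ominus s}\}$, the set of points $w$ with $w+K\subseteq \bar B(0,s)$. So the needed statement is a Gaussian isoperimetric-type bound for the erosion $\bar B(0,s)\ominus K$ under a mass constraint on $K$. Stochastic ordering would then yield the result for every nondecreasing $h$, and in particular would remove the convexity issue left open in Step 2.
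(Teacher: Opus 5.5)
There is a genuine gap. Your Step 1 and the opening reductions of Step 2 are correct and coincide with the paper's; your $R_B$ is the paper's $\varphi_A$ with $A=-B$. But the inequality $E\,h(R_B(W))\ge E\,h(\|W\|+\rho_\alpha)$, which is the whole content of the proposition, is never established. Step 3 only lists candidate strategies, and your central-symmetrization step is justified only for convex $h$.

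The first missing idea is to compare sublevel sets rather than values at one point. Put $E_B(t):=\{w:R_B(w)\le t\}=\bigcap_{b\in B}\bar B(-b,t)$, which is convex. Apply your own triangle inequality at two independent points $w_1,w_2$ instead of the diagonal pair $(w,-w)$. This gives $\|\tfrac12(w_1+b)+\tfrac12\sigma(w_2+b')\|\le\tfrac12R_B(w_1)+\tfrac12R_B(w_2)$. Hence, for any orthogonal reflection $\sigma$,
\[
\tfrac12\bigl(E_B(t)+\sigma E_B(t)\bigr)\subseteq E_{\frac12(B+\sigma B)}(t).
\]
Since $\gamma_d(\sigma E)=\gamma_d(E)$, the same Pr\'ekopa--Leindler (or Ehrhard) bound you used for $\gamma_d(K)$ yields $\gamma_d\bigl(E_{\frac12(B+\sigma B)}(t)\bigr)\ge\gamma_d(E_B(t))$ for every $t$. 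That is the stochastic ordering $R_{\frac12(B+\sigma B)}(W)\le_{\mathrm{st}}R_B(W)$, which covers every nondecreasing $h$ with no convexity needed.

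The second missing piece is how to reach the ball. The point reflection $\sigma=-I$ stalls after one step, since $\tfrac12(K-K)=K$ for symmetric convex $K$; this is exactly why you got stuck in the symmetric case. The paper instead uses reflections $\sigma_\omega$ across hyperplanes $\omega^\perp$. Each step $B\mapsto\tfrac12(B+\sigma_\omega B)$ keeps $\gamma_d\ge 1-\alpha$ and lowers the risk by the argument above. Along a suitable universal sequence of directions (Coupier--Davydov), the iterates $B_m$ converge in Hausdorff distance to a centred ball $\bar B(0,r_\infty)$.

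The limit step is then routine:
\begin{itemize}
\item All $B_m$ lie in a fixed ball $\bar B(0,R_0)\supseteq B$, so $h(R_0+\|W\|)$ is an integrable dominating function.
\item $|R_{B_m}-R_{\bar B(0,r_\infty)}|\le d_H\bigl(B_m,\bar B(0,r_\infty)\bigr)$, which with continuity of $h$ passes the risk bound to the limit.
\item $\gamma_d\bigl(\partial\bar B(0,r_\infty)\bigr)=0$ gives $\gamma_d\bigl(\bar B(0,r_\infty)\bigr)\ge 1-\alpha$, hence $r_\infty\ge\rho_\alpha$.
\item $\|w\|+r$ is increasing in $r$, so the ball of radius $\rho_\alpha$ does at least as well.
\end{itemize}

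On your alternatives:
\begin{itemize}
\item Ehrhard (Gaussian) symmetrization would fail here. It replaces line sections by half-lines, so the symmetral is unbounded and the risk infinite.
\item Steiner symmetrization can be made to work, via $S_\omega B\subseteq\tfrac12(B+\sigma_\omega B)$. But this needs exactly the Minkowski-step ordering you do not have.
\item Your fallback erosion inequality is a correct restatement of what must be shown. It is precisely the stochastic ordering the iteration delivers, but you give no route to it.
\end{itemize}
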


\begin{proof}
Fix $C_\alpha\in C_\alpha^{\mathrm{tr}}$. By translation equivariance there exists
a measurable set
\[
A:=-C_\alpha(0)\subset\mathbb{R}^d
\]
such that
\[
C_\alpha(z)=z-A, \qquad z\in\mathbb{R}^d.
\]
Let
\[
W:=Z-u\sim N_d(0,I_d).
\]
Then
\[
u\in C_\alpha(Z)
\iff
u\in Z-A
\iff
Z-u\in A
\iff
W\in A.
\]
Hence the coverage constraint is
\[
\gamma_d(A)\ge 1-\alpha,
\]
where $\gamma_d$ denotes standard Gaussian measure on $\mathbb{R}^d$.

Also,
\[
\sup_{v\in C_\alpha(Z)}\|v-u\|
=
\sup_{a\in A}\|Z-a-u\|
=
\sup_{a\in A}\|W-a\|.
\]
Define
\[
\varphi_A(w):=\sup_{a\in A}\|w-a\|, \qquad w\in\mathbb{R}^d.
\]
Then
\[
r_\alpha(u,C_\alpha)=E\,h(\varphi_A(W)),
\]
which no longer depends on $u$. Thus it suffices to minimize
\[
E\,h(\varphi_A(W))
\]
over measurable $A\subset\mathbb{R}^d$ subject to $\gamma_d(A)\ge 1-\alpha$.

If $A$ were unbounded, then for every $w\in\mathbb{R}^d$ and every $M>0$ there
would exist $a\in A$ with $\|w-a\|>M$, so $\varphi_A(w)\ge M$. Since $h$ is
nondecreasing and $h(r)\to\infty$, this would force $h(\varphi_A(w))=\infty$
for every $w$, hence $E\,h(\varphi_A(W))=\infty$. Therefore every finite-risk
competitor is bounded.

Now replace $A$ by
\[
K:=\overline{\operatorname{conv}}(A).
\]
Then $\gamma_d(K)\ge \gamma_d(A)$, since $A\subseteq K$. Moreover, for each
fixed $w$, the map $a\mapsto \|w-a\|$ is convex, so
\[
\sup_{a\in K}\|w-a\|=\sup_{a\in A}\|w-a\|,
\]
hence $\varphi_K=\varphi_A$. Therefore, replacing $A$ by $K$ if necessary, we
may assume from now on that $A$ is compact and convex (by convention confidence set are assumed closed, so the closure of the hull is then not needed for compactness). Since
$\gamma_d(A)\ge 1-\alpha>0$, $A$ has nonempty interior, so $A$ is a convex body.

For $\omega\in S^{d-1}$, let $\sigma_\omega$ denote reflection across
$\omega^\perp$, and define the Minkowski symmetral
\[
B_\omega A:=\frac12(A+\sigma_\omega A),
\]
where $+$ here denotes the Minkowski sum. Since $\gamma_d(\sigma_\omega A)=\gamma_d(A)$, Ehrhard's inequality \citep{Erhard-orig,BORELL2003663} gives
\[
\Phi^{-1}\bigl(\gamma_d(B_\omega A)\bigr)
\ge
\frac12\Phi^{-1}\bigl(\gamma_d(A)\bigr)
+
\frac12\Phi^{-1}\bigl(\gamma_d(\sigma_\omega A)\bigr)
=
\Phi^{-1}\bigl(\gamma_d(A)\bigr),
\]
hence
\[
\gamma_d(B_\omega A)\ge \gamma_d(A).
\]

To compare risks, define for $t\ge 0$
\[
E_A(t):=\{w\in\mathbb{R}^d:\varphi_A(w)\le t\}.
\]
If $B(w,t)$ denotes the closed Euclidean ball of radius $t$ centered at $w$,
then
\[
\varphi_A(w)\le t
\iff
A\subseteq B(w,t)
\iff
w\in \bigcap_{a\in A} B(a,t),
\]
so
\[
E_A(t)=\bigcap_{a\in A} B(a,t).
\]
In particular, $E_A(t)$ is convex.

We then claim that
\[
\frac12\bigl(E_A(t)+\sigma_\omega E_A(t)\bigr)\subseteq E_{B_\omega A}(t).
\]
Indeed, let $w_1\in E_A(t)$ and let $w_2\in \sigma_\omega E_A(t)$. Write
$w_2=\sigma_\omega u_2$ for some $u_2\in E_A(t)$, and set
\[
w:=\frac12(w_1+w_2)=\frac12(w_1+\sigma_\omega u_2).
\]
Take any $x\in B_\omega A$. Then
\[
x=\frac12(a+\sigma_\omega b)
\]
for some $a,b\in A$. Hence
\[
\|x-w\|
\le
\frac12\|a-w_1\|+\frac12\|\sigma_\omega b-\sigma_\omega u_2\|
=
\frac12\|a-w_1\|+\frac12\|b-u_2\|
\le t,
\]
because $w_1,u_2\in E_A(t)$. Therefore $w\in E_{B_\omega A}(t)$, proving the
claim.

Applying Ehrhard's inequality to the convex sets $E_A(t)$ and
$\sigma_\omega E_A(t)$ yields
\[
\gamma_d(E_{B_\omega A}(t))
\ge
\gamma_d\!\left(\frac12\bigl(E_A(t)+\sigma_\omega E_A(t)\bigr)\right)
\ge
\gamma_d(E_A(t)).
\]
Since
\[
\gamma_d(E_A(t))=P\{\varphi_A(W)\le t\},
\]
it follows that
\[
P\{\varphi_{B_\omega A}(W)\le t\}\ge P\{\varphi_A(W)\le t\}
\qquad
\forall t\ge 0.
\]
That is,
\[
\varphi_{B_\omega A}(W)\le_{\mathrm{st}} \varphi_A(W),
\]
and therefore, because $h$ is nondecreasing,
\[
E\,h(\varphi_{B_\omega A}(W))\le E\,h(\varphi_A(W)).
\]

Now choose a countable dense set $T\subset S^{d-1}$. By the universal-sequence
theorem for Minkowski symmetrization (see Coupier and Davydov
\citep{MinkowskiSymm}), there exist directions
$\omega_1,\omega_2,\dots\in T$ such that, if
\[
A_0:=A,
\qquad
A_{m+1}:=B_{\omega_{m+1}}A_m
\qquad (m\ge 0),
\]
then
\[
A_m \to B_R
\]
in Hausdorff distance for some centered Euclidean ball
\[
B_R:=\{x\in\mathbb{R}^d:\|x\|\le R\}.
\]

The one-step inequalities imply
\[
\gamma_d(A_m)\ge \gamma_d(A)
\quad\text{and}\quad
E\,h(\varphi_{A_m}(W))\le E\,h(\varphi_A(W))
\qquad (m\ge 0).
\]
Choose $R_0<\infty$ such that $A\subseteq B_{R_0}$. Since reflections preserve
$B_{R_0}$ and Minkowski symmetrization stays inside the same centered ball,
\[
A_m\subseteq B_{R_0}
\qquad \forall m.
\]
Hence
\[
\varphi_{A_m}(w)\le R_0+\|w\|.
\]
By assumption,
\[
E\,h(R_0+\|W\|)<\infty.
\]
Also,
\[
|\varphi_{A_m}(w)-\varphi_{B_R}(w)|
\le
d_H(A_m,B_R)\to 0.
\]
Therefore, by dominated convergence,
\[
E\,h(\varphi_{B_R}(W))
=
\lim_{m\to\infty} E\,h(\varphi_{A_m}(W))
\le
E\,h(\varphi_A(W)).
\]

Similarly, Hausdorff convergence implies
\[
1_{A_m}(w)\to 1_{B_R}(w)
\qquad
\text{for every } w\notin \partial B_R,
\]
and $\gamma_d(\partial B_R)=0$. Hence dominated convergence gives
\[
\gamma_d(B_R)
=
\lim_{m\to\infty}\gamma_d(A_m)
\ge
\gamma_d(A)
\ge
1-\alpha.
\]

Now let $\rho_\alpha$ be the unique radius such that
\[
\gamma_d(B_{\rho_\alpha})=1-\alpha.
\]
Equivalently,
\[
\rho_\alpha^2 = F_d^{-1}(1-\alpha).
\]
Since Gaussian measure of centered balls is increasing in the radius,
$\rho_\alpha\le R$. For a centered Euclidean ball,
\[
\varphi_{B_r}(w)=r+\|w\|,
\]
so by monotonicity of $h$,
\[
E\,h(\varphi_{B_{\rho_\alpha}}(W))
\le
E\,h(\varphi_{B_R}(W))
\le
E\,h(\varphi_A(W)).
\]

Translating back to confidence sets, and using $B_{\rho_\alpha}=-B_{\rho_\alpha}$,
the centered ball $B_{\rho_\alpha}$ corresponds to
\[
C_\alpha^\star(z)=z-B_{\rho_\alpha}
=
\{v\in\mathbb{R}^d:\|v-z\|\le \rho_\alpha\}.
\]
Therefore
\[
r_\alpha(u,C_\alpha^\star)\le r_\alpha(u,C_\alpha)
\qquad
\forall u\in\mathbb{R}^d.
\]
Since $C_\alpha$ was arbitrary, the proof is complete.
\end{proof}

\end{document}